\newtheorem{theorem}{Theorem}
\newtheorem{corollary}[theorem]{Corollary}
\newtheorem{definition}[theorem]{Definition}
\newtheorem{lemma}[theorem]{Lemma}
\newtheorem{proposition}[theorem]{Proposition}
\newtheorem{remark}[theorem]{Remark}
\newenvironment{proof}[1][Proof]{\noindent\textbf{#1.} }{\ \rule{0.5em}{0.5em}}
\let\geq\geqslant
\let\leq\leqslant
\begin{document}

\title{On $L_{p}$- theory for stochastic parabolic integro-differential
equations}
\author{R. Mikulevicius and H. Pragarauskas \\
University of Southern California, Los Angeles\\
Institute of Mathematics and Informatics, Vilnius }
\maketitle

\begin{abstract}
The existence and uniqueness in fractional Sobolev spaces of the Cauchy
problem to a stochastic parabolic integro-differential equation is
investigated. A model problem with coefficients independent of space
variable is considered. The equation arises in a filtering problem with a
jump signal and jump observation process.
\end{abstract}

\section{ \protect\bigskip Introduction}

Let $(\Omega ,\mathcal{F},\mathbf{P})$ be a complete probability space with
a filtration of $\sigma $-algebras $\mathbb{F}=(\mathcal{F}_{t},t\geq 0)$
satisfying the usual conditions. Let $\mathcal{R}(\mathbb{F})$ be the
progressive $\sigma $-algebra on $[0,\infty )\times \Omega $. Let $(U,%
\mathcal{U},\Pi )$ be a measurable space with a $\sigma $-finite measure $%
\Pi ,\mathbf{R}_{0}^{d}=\mathbf{R}^{d}\backslash \{0\}$. Let $p^{(\alpha
)}(dt,dy),\ \alpha \in (0,2)$, and $\nu (dt,d\upsilon )$ be $\mathbb{F}$%
-adapted point measures on $([0,\infty )\times \mathbf{R}_{0}^{d},\mathcal{B}%
([0,\infty ))\otimes \mathcal{B}(\mathbf{R}_{0}^{d}))$ and $([0,\infty
)\times U,\mathcal{B}([0,\infty ))\otimes \mathcal{U})$ with compensators $%
l^{(\alpha )}(t,y)dydt/|y|^{d+\alpha }$ and $\Pi (d\upsilon )dt$. We assume
that the measures $\nu $ and $p^{(\alpha )},\ \alpha \in (0,2)$, have no
common jumps.

Let $E=[0,T]\times \mathbf{R}^{d}$. For fixed $\alpha \in (0,2]$, we
consider the linear stochastic integro-differential parabolic equation 
\begin{eqnarray}
du(t,x) &=&\bigl(A^{(\alpha )}u(t,x)-\lambda u(t,x)+f(t,x)\bigr)dt
\label{intr1} \\
&&+\int_{\mathbf{R}_{0}^{d}}[u(t-,x+y)-u(t-,x)+g(t,x,y)]q^{(\alpha
)}(dt,dy)1_{\alpha \in (0,2)}  \notag \\
&&+[1_{\alpha =2}\sigma ^{i}(t)\partial
_{i}u(t,x)+h(t,x)]dW_{t}\,+\int_{U}\Phi (t,x,\upsilon )\eta (dt,d\upsilon )%
\text{\quad in }E,  \notag \\
u(0,x) &=&u_{0}(x)\text{ in }\mathbf{R}^{d},  \notag
\end{eqnarray}%
where $\lambda \geq 0,$ $W_{t}$ is a cylindrical $\mathbb{F}$-adapted Wiener
process in a separable Hilbert space $Y$ and $q^{(\alpha )},\ \alpha \in
(0,2),\ \eta $ are martingale measures defined by 
\begin{equation*}
q^{(\alpha )}(dt,dy)=p^{(\alpha )}(dt,dy)-l^{(\alpha )}(t,y)\frac{dydt}{%
|y|^{d+\alpha }}
\end{equation*}%
and 
\begin{equation*}
\eta (dt,d\upsilon )=\nu (dt,d\upsilon )-\Pi (d\upsilon )dt.
\end{equation*}

The input functions $u_{0},f,g,\Phi ,h$ satisfy the following measurability
assumptions: $u_{0}$ is $\mathcal{F}_{0}\otimes \mathcal{B}(\mathbf{R}^{d})$%
-measurable, $g$ is $\mathcal{R}(\mathbb{F)}\otimes \mathcal{B}(\mathbf{R}%
^{d})\otimes \mathcal{B}(\mathbf{R}_{0}^{d})$-measurable, $f$ is $\mathcal{R}%
(\mathbb{F})\otimes \mathcal{B}(\mathbf{R}^{d})$-measurable, $\Phi $ is $%
\mathcal{R}(\mathbb{F)\otimes }\mathcal{B}(\mathbf{R}^{d})\otimes \mathcal{U}
$-measurable and $h$ is $Y$-valued and $\mathcal{R}(\mathbb{F})\otimes 
\mathcal{B}(\mathbf{R}^{d})$-measurable. The operator $A^{(\alpha )}$ is
defined as 
\begin{eqnarray}
A^{(\alpha )}u(t,x) &=&\int_{\mathbf{R}_{0}^{d}}\nabla _{y}^{\alpha
}u(t,x)m^{(\alpha )}(t,y)\frac{dy}{|y|^{d+\alpha }}1_{\alpha \in (0,2)}
\label{nf1} \\
&&+\bigl(b(t),\nabla u(t,x)\bigr)1_{\alpha =1}+\frac{1}{2}B^{ij}(t)\partial
_{ij}^{2}u(t,x)1_{\alpha =2},  \notag
\end{eqnarray}%
where 
\begin{equation*}
\nabla _{y}^{\alpha }u(t,x)=u(t,x+y)-u(t,x)-(\nabla u(t,x),y)\chi ^{(\alpha
)}{(y)}
\end{equation*}%
with $\chi ^{(\alpha )}{(y)}=1_{\alpha \in (1,2)}+1_{|y|\leqslant
1}1_{\alpha =1}$. Here and throughout the paper we use the standard
convention of summation over repeating indices. The integral part 
\begin{equation*}
\int_{\mathbf{R}_{0}^{d}}\nabla _{y}^{\alpha }u(t,x)m^{(\alpha )}(t,y)\frac{%
dy}{|y|^{d+\alpha }}=\Delta ^{\alpha /2}u
\end{equation*}%
is the fractional Laplacian if $m^{(\alpha )}=1$. The functions $m^{(\alpha
)},l^{(\alpha )}$ are $\mathcal{R}(\mathbb{F})\otimes \mathcal{B}(\mathbf{R}%
_{0}^{d})$-measurable bounded and non-negative; $\sigma ^{i}(t),i=1,\ldots
,d $, are $\mathcal{R}(\mathbb{F})$-measurable\break bounded $Y$-valued
functions, $b(t)=(b^{1}(t),\ldots ,b^{d}(t))$ is a $\mathcal{R}(\mathbb{F})$%
-measurable bounded function and $B(t)=(B^{ij}(t),i,j=1,\ldots ,d)$ is a $%
\mathcal{R}(\mathbb{F})$-measurable bounded symmetric non-negative definite
matrix-valued function. We assume \textbf{parabolicity} of~(\ref{intr1}),
i.e. $m^{(\alpha )}-l^{(\alpha )}\geq 0$ if $\alpha \in (0,2)$ and the
matrix $B^{ij}(t)-\frac{1}{2}\sigma ^{i}(t)\cdot \sigma ^{j}(t)$ is
non-negative definite if $\alpha =2$ ($\cdot $ denotes the inner product in $%
Y$).

The equation (\ref{intr1}) is the model problem for the Zakai equation (see 
\cite{za}) arising in the nonlinear filtering problem. Let $\alpha \in (0,2)$
and $Z_{t}^{i},t\geq 0,i=1,2,$ be two independent $\alpha $-stable processes
defined by 
\begin{equation*}
Z_{t}^{i}=\int_{0}^{t}\int_{\mathbf{R}_{0}^{d}}y\chi ^{(\alpha
)}(y)q^{Z^{i}}(ds,dy)+\int_{0}^{t}\int_{\mathbf{R}_{0}^{d}}y\big[1-\chi
^{(\alpha )}(y)\big]p^{Z^{i}}(ds,dy),
\end{equation*}%
where $p^{Z^{i}}(ds,dy)$ is the jump measure of $Z^{i}$ and 
\begin{equation*}
q^{Z^{i}}(ds,dy)=p^{Z^{i}}(ds,dy)-m_{i}^{(\alpha )}(s,y)\frac{dyds}{%
|y|^{d+\alpha }}
\end{equation*}%
is the martingale measure. Assume that the signal process 
\begin{equation*}
X_{t}=X_{0}+Z_{t}^{1}+Z_{t}^{2},\quad t\geq 0,
\end{equation*}%
and we observe $Y_{t}=Z_{t}^{2}$. Suppose that $X_{0}$ has a probability
density function $u_{0}(x)$ and does not depend on $Z^{i},i=1,2.$ Then for
every function $f\in C_{0}^{\infty }(\mathbf{R}^{d}),$ the optimal mean
square estimate for $f\left( X_{t}\right) ,\,t\in \left[ 0,T\right] $, given
the past of \ the observations $\mathcal{F}_{t}^{Y}=\sigma (Y_{s},s\leqslant
t),$ is of the form $\pi _{t}(f)=\mathbf{E}(f(X_{t})|\mathcal{F}_{t}^{Y})$.
According to \cite{grig1},%
\begin{eqnarray*}
&&d\pi _{t}(f)=\int \pi _{t}\big(f(\cdot +y)-f\big)q^{Y}(dt,dy) \\
&&\quad +\int_{\mathbf{R}_{0}^{d}}\pi _{t}(\nabla _{y}^{\alpha }f(\cdot
))[m_{1}^{(\alpha )}(t,y)+m_{2}^{(\alpha )}(t,y)]\frac{dy}{|y|^{d+\alpha }}%
dt.
\end{eqnarray*}%
Assume there is a smooth $\mathbb{(}\mathcal{F}_{t+}^{Y})$-adapted filtering
density function $v(t,x)$, 
\begin{equation*}
\mathbf{E}\left[ f\left( X_{t}\right) |\mathcal{F}_{t}^{Y}\right] =\int
v\left( t,x\right) f\left( x\right) \,dx,f\in C_{0}^{\infty }(\mathbf{R}%
^{d}).
\end{equation*}%
Integrating by parts, we get%
\begin{eqnarray*}
dv(t,x) &=&\int_{\mathbf{R}_{0}^{d}}[v(t,x+y)-v(t,x)]q^{-Y}(dt,dy) \\
&&+\int_{\mathbf{R}_{0}^{d}}\nabla _{y}^{\alpha }v(t,x)[m_{1}^{(\alpha
)}(t,-y)+m_{2}^{(\alpha )}(t,-y)]\frac{dydt}{|y|^{d+\alpha }}, \\
v(0,x) &=&u_{0}(x).
\end{eqnarray*}

On the other hand, the proof of Proposition \ref{prop2} shows that given $%
\mathcal{F}_{t}^{Y}$ the solution $u(t,x)$ to (\ref{intr1}) with $\Phi =0$
and smooth deterministic input functions $u_{0},f,g,$ is the best mean
square estimate of 
\begin{eqnarray*}
\xi (t,x) &=&u_{0}(x+X_{t}-X_{0})+\int_{0}^{t}f(r,x+X_{t}-X_{r})dr \\
&&+\int_{0}^{t}\int g(r,x+X_{t}-X_{r},y)q^{(\alpha )}(\overleftarrow{d}r,dy)
\end{eqnarray*}%
(here $\overleftarrow{d}$ denotes the backward stochastic integral):%
\begin{equation*}
u(t,x)=\mathbf{E}[\xi (t,x)|\mathcal{F}_{t}^{Y}].
\end{equation*}

The general Cauchy problem for a linear parabolic SPDE of the second order 
\begin{equation}
\left\{ 
\begin{array}{ll}
du=(\frac{1}{2}a^{ij}\,\partial _{ij}u+b^{i}\partial _{i}u+c\,u+f)dt+(\sigma
^{i}\partial _{i}u+h\,u+g)d{W}_{t} & \text{in }E, \\ 
u(0,x)=0 & \text{in }\mathbf{R}^{d}%
\end{array}%
\right.  \label{intr3}
\end{equation}%
driven by a Wiener process $W_{t}$ has been studied by many authors. When
the matrix $(a^{ij}-\sigma ^{i}\cdot \sigma ^{j})$ is uniformly
non-degenerate there exists a complete theory in Sobolev spaces and in the
spaces of Bessel potentials $H_{s}^{p}$ (see \cite{kry1} and references
therein).

In \cite{cl}, the equation (\ref{intr1}) was considered in fractional
Sobolev and Besov spaces in the case of $A^{(\alpha )}=\Delta ^{\alpha /2}$
with $q^{(a)}=0,\eta =0,\sigma =0$ and a finite dimensional $Y$.

In \cite{kim1}, the equation (\ref{intr1}) was considered in fractional
Sobolev spaces in the following special form (see equation (3.4) in \cite%
{kim1}):%
\begin{equation}
du(t)=(a(t)\Delta ^{\alpha /2}u(t)+f(t))dt+\sum_{k=1}^{\infty
}h^{k}(t)dW_{t}^{k}+\sum_{k=1}^{\infty }g^{k}(t)dY^{k},  \label{f01}
\end{equation}%
where $a(t)\geq \delta >0$ is a positive scalar function, $W^{k}$ are
independent standard Wiener processes,%
\begin{equation*}
Y_{t}^{k}=\int_{0}^{t}\int z[N^{k}(ds,dz)-\pi _{k}(dz)ds],t\geq 0,k\geq 1,
\end{equation*}%
are independent $\mathbf{R}^{m}$-valued with independent Poisson point
measures $N^{k}(ds,dz)$ on $[0,\infty )\times \mathbf{R}_{0}^{m},\mathbf{E}%
N^{k}(ds,dz)=\pi _{k}(dz)ds$ and 
\begin{equation*}
\int |z|^{2}\pi _{k}(dz)<\infty ,k\geq 1.
\end{equation*}%
Since $Y_{t}^{k}$ are independent they do not have common jump moments and
we can introduce a point measure $\nu (ds,d\upsilon )$ on $[0,\infty )\times
U$ with $U=\mathbf{N\times R}_{0}^{m}$ ($\mathbb{N}=\{1,2,\ldots \}$) by%
\begin{equation*}
\nu (ds,d\upsilon )=\nu (ds,dkdz)=N^{k}(ds,dz)dk,
\end{equation*}%
where $dk$ is the counting measure on $\mathbf{N}$. Then $\mathbf{E}%
p(ds,dkdz)=$ $\pi _{k}(dz)dkds$ and%
\begin{equation*}
\eta (ds,d\upsilon )=\nu (ds,dkdz)-\pi _{k}(dz)dkds
\end{equation*}%
is a martingale measure. Therefore with $Y=l^{2}$ (the space of square
summable sequences) we can rewrite (\ref{f01}) as%
\begin{equation}
du(t)=(a(t)\Delta ^{\alpha /2}u(t)+f(t))dt+h(t)dW_{t}+\int_{U}\Phi
(t,\upsilon )\eta (dt,d\upsilon )  \label{f02}
\end{equation}%
where $\Phi (t,\upsilon )=g(t,k,z)=g^{k}(t)\cdot z$. Thus (\ref{f01}) is a
partial case of (\ref{intr1}) with $q^{(\alpha )}=0$ and $m^{(\alpha
)}(t,y)=a(t)$. Theorem \ref{main1} below shows that the estimates of the
main Theorem 3.6 in \cite{kim1} are not sharp and the assumptions can be
relaxed. Contrary to the case of a partial differential equation, in order
to handle an equation with $A^{(\alpha )},$ it is not sufficient to consider
an equation with fractional Laplacian like (\ref{f02}). Since only
measurability of $m^{(\alpha )}(t,y)$ in $y$ is assumed, in general (for $%
\alpha \in (0,2)$) the symbol of $A^{(\alpha )}$%
\begin{equation*}
\psi ^{(\alpha )}(t,\xi )=\int \left[ e^{i(\xi ,y)}-1-\chi _{\alpha
}(y)i(\xi ,y)\right] m^{(\alpha )}(t,y)\frac{dy}{|y|^{d+\alpha }}-i(b(t),\xi
)1_{\alpha =1}
\end{equation*}%
is not smooth in $\xi $. In addition, $m^{(\alpha )}(t,y)$ can degenerate on
a substantial set (see Assumption A and Remark \ref{r1} below). The equation
(\ref{intr1}) in H\"{o}lder classes was considered in \cite{MiP09}.

In this paper, we prove the solvability of the general Cauchy model problem (%
\ref{intr1}) in fractional Sobolev spaces. In Section 2, we introduce the
notation and state our main results. In Section 3, we prove some auxiliary
results concerning approximation of the input functions. In Section 4, we
consider a partial case of (\ref{intr1}) with $q^{(\alpha )}=0$, non-random $%
m^{(\alpha )}$ and smooth input functions. In the last two sections we give
the proofs of the main results.

\section{Notation, function spaces and main results}

\subsection{Notation}

The following notation will be used in the paper.

Let $\mathbf{N}_{0}=\{0,1,2,\ldots \},\mathbf{R}_{0}^{d}=\mathbf{R}%
^{d}\backslash \{0\}.$ If $x,y\in \mathbf{R}^{d},$\ we write 
\begin{equation*}
(x,y)=\sum_{i=1}^{d}x_{i}y_{i},\,|x|=\sqrt{(x,x)}.
\end{equation*}

We denote by $C_{0}^{\infty }(\mathbf{R}^{d})$ the set of all infinitely
differentiable functions on $\mathbf{R}^{d}$ with compact support.

We denote the partial derivatives in $x$ of a function $u(t,x)$ on $\mathbf{R%
}^{d+1}$ by $\partial _{i}u=\partial u/\partial x_{i}$, $\partial
_{ij}^{2}u=\partial ^{2}u/\partial x_{i}\partial x_{j}$, etc.;$\,Du=\nabla
u=(\partial _{1}u,\ldots ,\partial _{d}u)$ denotes the gradient of $u$ with
respect to $x$; for a multiindex $\gamma \in \mathbf{N}_{0}^{d}$ we denote%
\begin{equation*}
D_{x}^{{\scriptsize \gamma }}u(t,x)=\frac{\partial ^{|{\scriptsize \gamma |}%
}u(t,x)}{\partial x_{1}^{{\scriptsize \gamma _{1}}}\ldots \partial x_{d}^{%
{\scriptsize \gamma _{d}}}}.
\end{equation*}%
For $\alpha \in (0,2]$ and a function $u(t,x)$ on $\mathbf{R}^{d+1}$, we
write 
\begin{equation*}
\partial ^{{\scriptsize \alpha }}u(t,x)=-\mathcal{F}^{-1}[|\xi |^{%
{\scriptsize \alpha }}\mathcal{F}u(t,\xi )](x),
\end{equation*}%
where 
\begin{equation*}
\mathcal{F}h(t,\xi )=\int_{\mathbf{R}^{d}}\,\mathrm{e}^{-i({\scriptsize \xi
,x)}}h(t,x)dx,\mathcal{F}^{-1}h(t,\xi )=\frac{1}{(2\pi )^{d}}\int_{\mathbf{R}%
^{d}}\,\mathrm{e}^{i({\scriptsize \xi ,x)}}h(t,\xi )d\xi .
\end{equation*}

The letters $C=C(\cdot ,\ldots ,\cdot )$ and $c=c(\cdot ,\ldots ,\cdot )$
denote constants depending only on quantities appearing in parentheses. In a
given context the same letter will (generally) be used to denote different
constants depending on the same set of arguments.

\subsection{Function spaces\label{test}}

Let $\mathcal{S}(\mathbf{R}^{d})$ be the Schwartz space of smooth
real-valued rapidly decreasing functions. Let $V$ be a Banach space with a
norm $|\cdot |_{V}$. The space of $V$-valued tempered distributions we
denote by $\mathcal{S}^{\prime }(\mathbf{R}^{d},V)$ ($f\in \mathcal{S}%
^{\prime }(\mathbf{R}^{d},V)$ is a continuous $V$-valued linear functional
on $\mathcal{S}(\mathbf{R}^{d})$). If $V=\mathbf{R}$, we write $\mathcal{S}%
^{\prime }(\mathbf{R}^{d},V)=\mathcal{S}^{\prime }(\mathbf{R}^{d})$ and
denote by $\left\langle \cdot ,\cdot \right\rangle $ the duality between $%
\mathcal{S}^{\prime }(\mathbf{R}^{d})$ and $\mathcal{S}(\mathbf{R}^{d}).$

For a $V$-valued measurable function $h$ on $\mathbf{R}^{d}$ and $%
p\geqslant1 $ we denote 
\begin{equation*}
|h|_{V,p}^{p}=\int_{\mathbf{R}^{d}}|h(x)|_{V}^{p}dx.
\end{equation*}

Further, for a characterization of our function spaces we will use the
following construction (see \cite{BeLo76}). By Lemma 6.1.7 in \cite{BeLo76},
there is a function $\phi \in C_{0}^{\infty }(\mathbf{R}^{d})$ such that $%
\mathrm{supp}\,\phi =\{\xi :\frac{1}{2}\leqslant |\xi |\leqslant 2\}$, $\phi
(\xi )>0$ if $2^{-1}<|\xi |<2$ and 
\begin{equation*}
\sum_{j=-\infty }^{\infty }\phi (2^{-j}\xi )=1\quad \text{if }\xi \neq 0.
\end{equation*}%
Define the functions $\varphi _{k}\in \mathcal{S}(\mathbf{R}^{d}),$ $%
k=1,\ldots ,$ by 
\begin{equation*}
\mathcal{F}\varphi _{k}(\xi )=\phi (2^{-k}\xi ),
\end{equation*}%
and $\varphi _{0}\in \mathcal{S}(\mathbf{R}^{d})$ by 
\begin{equation*}
\mathcal{F}\varphi _{0}(\xi )=1-\sum_{k\geqslant 1}\mathcal{F}\varphi
_{k}(\xi ).
\end{equation*}

Let $\beta \in \mathbf{R}$ and $p\geqslant 1$. We introduce the Besov space $%
B_{pp}^{{\scriptsize \beta }}(\mathbf{R}^{d},V)$ of generalized functions $%
f\in \mathcal{S}^{\prime }(\mathbf{R}^{d},V)$ with finite norm%
\begin{equation*}
|f|_{B_{pp}^{{\scriptsize \beta }}(\mathbf{R}^{d},V)}=\Bigg\{%
\sum_{j=0}^{\infty }2^{j{\scriptsize \beta }p}|\varphi _{j}\ast f|_{V,p}^{p}%
\Bigg\}^{1/p}
\end{equation*}%
and the Sobolev space $H_{p}^{{\scriptsize \beta }}(\mathbf{R}^{d},V)$ of $%
f\in \mathcal{S}^{\prime }(\mathbf{R}^{d},V)$ with finite norm%
\begin{equation}
|f|_{H_{p}^{{\scriptsize \beta }}(\mathbf{R}^{d},V)}=|\mathcal{F}%
^{-1}((1+|\xi |^{2})^{{\scriptsize \beta /2}}\mathcal{F}f)|_{V,p}=\left\vert
J^{\beta }f\right\vert _{V,p},  \label{nf5}
\end{equation}%
where 
\begin{equation*}
J^{\beta }=(I-\Delta )^{\beta /2},
\end{equation*}%
$I$ is the identity map and $\Delta $ is the Laplacian in $\mathbf{R}^{d}$.
For the scalar functions an equivalent norm to (\ref{nf5}) is defined by%
\begin{equation}
|f|_{H_{p}^{\beta }(\mathbf{R}^{d})}=\Bigg\{\int_{\mathbf{R}^{d}}\bigg(%
\sum_{j=0}^{\infty }2^{2\beta j}|\varphi _{j}\ast f(x)|^{2}\bigg)^{p/2}dx%
\Bigg\}^{1/p}.  \label{nf0}
\end{equation}

We also introduce the corresponding spaces of generalized functions on $%
E=[0,T]\times\mathbf{R}^d$. The spaces $B^{\beta}_{pp}(E,V)$ and $%
H^{\beta}_{p}(E,V)$ consist of all measurable $S^{\prime }(\mathbf{R}^d,V)$%
-valued functions $f$ on $[0,T]$ with finite norms 
\begin{equation*}
|f|_{B_{pp}^{{\scriptsize \beta }}(E,V)}=\left\{\int_0^T|f(t)|^p_{B^{%
\beta}_{pp}(\mathbf{R}^d,V)}dt\right\}^{\frac1p}
\end{equation*}
and 
\begin{equation*}
|f|_{H_{p}^{{\scriptsize \beta }}(E,V)}=\left\{\int_0^T|f(t)|^p_{H^{%
\beta}_{p}(\mathbf{R}^d,V)} dt\right\}^{\frac1p}.
\end{equation*}

Similarly we introduce the corresponding spaces of random generalized
functions.

Let $(\Omega, \mathcal{F},\mathbf{P})$ be a complete probability space with
a filtration of $\sigma$-algebras $\mathbb{F}=(\mathcal{F}_t)$ satisfying
the usual conditions. Let $\mathcal{R}(\mathbb{F})$ be the progressive $%
\sigma$-algebra on $[0,\infty)\times\Omega$.

The spaces $\mathbb{B}_{pp}^{{\scriptsize \beta }}(\mathbf{R}^d,V)$ and $%
\mathbb{H}_{p}^{{\scriptsize \beta }}(\mathbf{R}^d,V)$ consist of all $%
\mathcal{F}$-measurable random functions $f$ with values in ${B}_{pp}^{%
{\scriptsize \beta }}(\mathbf{R}^d,V)$ and ${H}_{p}^{{\scriptsize \beta }}(%
\mathbf{R}^d,V)$ with finite norms 
\begin{equation*}
|f|_{\mathbb{B}_{pp}^{{\scriptsize \beta }}(\mathbf{R}^d,V)}= \Big\{\mathbf{E%
}|f|^p_{B_{pp}^{{\scriptsize \beta }}(\mathbf{R}^{d},V)}\Big\}^{1/p}
\end{equation*}
and 
\begin{equation*}
|f|_{\mathbb{H}_{p}^{{\scriptsize \beta }}(\mathbf{R}^d,V)}= \Big\{\mathbf{E}%
|f|^p_{H_{p}^{{\scriptsize \beta }}(\mathbf{R}^{d},V)}\Big\}^{1/p}.
\end{equation*}

The spaces $\mathbb{B}_{pp}^{{\scriptsize \beta }}(E,V)$ and $\mathbb{H}%
_{p}^{{\scriptsize \beta }}(E,V)$ consist of all $\mathcal{R}(\mathbb{F})$%
-measurable random functions with values in ${B}_{pp}^{{\scriptsize \beta }%
}(E,V)$ and ${H}_{p}^{{\scriptsize \beta }}(E,V)$ with finite norms%
\begin{equation*}
|f|_{\mathbb{B}_{pp}^{{\scriptsize \beta }}(E,V)}= \Big\{\mathbf{E}%
|f|^p_{B_{pp}^{{\scriptsize \beta }}(E,V)}\Big\}^{1/p}
\end{equation*}
and 
\begin{equation*}
|f|_{\mathbb{H}_{p}^{{\scriptsize \beta }}(E,V)}= \Big\{\mathbf{E}%
|f|^p_{H_{p}^{{\scriptsize \beta }}(E,V)}\Big\}^{1/p}.
\end{equation*}

If $V=L_{r}(U,\mathcal{U},\Pi )$, $r\geq 1$, the space of $r$-integrable
measurable functions on $U$, for brevity of notation we write 
\begin{eqnarray*}
&&B_{r,pp}^{\beta }(A)=B_{pp}^{\beta }(A,V),\quad \mathbb{B}_{r,pp}^{\beta
}(A)=\mathbb{B}_{pp}^{\beta }(A,V), \\
&&H_{r,p}^{\beta }(A)=H_{p}^{\beta }(A,V),\quad \mathbb{H}_{r,p}^{\beta }(A)=%
\mathbb{H}_{p}^{\beta }(A,V), \\
&&L_{r,p}(A)=H_{r,p}^{0}(A),\quad \mathbb{L}_{r,p}(A)=\mathbb{H}%
_{r,p}^{0}(A),
\end{eqnarray*}%
where $A=\mathbf{R}^{d}$ or $E$. For scalar functions we drop $V$ in the
notation of function spaces.

We also introduce the spaces $\overline{\mathbb{B}}_{r,pp}^{\beta }(E)$ and $%
\overline{\mathbb{H}}_{r,p}^{\beta }(E)$ consisting of $\mathcal{R}(\mathbb{F%
})\otimes \mathcal{B}(\mathbf{R}_{0}^{d})$-measurable $S^{\prime }(\mathbf{R}%
^{d})$-valued random functions $f=f(t,x,y)$ with finite norms 
\begin{equation*}
|f|_{\overline{\mathbb{B}}_{r,pp}^{\beta }(E)}=\Bigg\{\mathbf{E}%
\sum_{j=0}^{\infty }2^{j\beta p}\int_{0}^{T}\int_{\mathbf{R}^{d}}||(\varphi
_{j}\ast f)(t,x,\cdot )||_{r}^{p}dxdt\Bigg\}^{1/p}
\end{equation*}%
and 
\begin{equation*}
|f|_{\overline{\mathbb{H}}_{r,p}^{\beta }(E)}=\Bigg\{\mathbf{E}%
\int_{0}^{T}\int_{\mathbf{R}^{d}}||J^{\beta }f(t,x,\cdot )||_{r}^{p}dxdt%
\Bigg\}^{1/p},
\end{equation*}%
where 
\begin{equation*}
||g(t,x,\cdot )||_{r}=\Bigg\{\int_{\mathbf{R}_{0}^{d}}|g(t,x,y)|^{r}l^{(%
\alpha )}(t,y)\frac{dy}{|y|^{d+\alpha }}\Bigg\}^{1/r}.
\end{equation*}

\subsection{Main results}

We fix non-random functions $m_{0}^{(\alpha )}(t,y)\geq 0$, $\alpha \in
(0,2) $, on $[0,T]\times \mathbf{R}_{0}^{d}$ and positive constants $K$ and $%
\delta $. Throughout the paper we assume that the functions $m_{0}^{(\alpha
)}$ satisfy the following conditions. \medskip

\noindent \textbf{Assumption} $\mathbf{A_{0}}.$ (i) For each $\alpha \in
(0,2)$ the function $m_{0}^{(\alpha )}(t,y)\geq 0$ is measurable,
homogeneous in $y$ with index zero, differentiable in $y$ up to the order $%
d_{0}=[\frac{d}{2}]+2$ and%
\begin{equation*}
|D_{y}^{\gamma }m_{0}^{(\alpha )}(t,y)|\leq K
\end{equation*}%
for all $t\in \lbrack 0,T]$, $y\in \mathbf{R}_{0}^{d}$ and multiindices $%
\gamma \in \mathbf{N}_{0}^{d}$ such that $|\gamma |\leq d_{0}$;

(ii) For all $t\in[0,T]$ 
\begin{equation*}
\int_{S^{d-1}}wm_{0}^{(1)}(t,w)\mu _{d-1}(dw)=0,
\end{equation*}
where $S^{d-1}$ is the unit sphere in $\mathbf{R}^{d}$ and $\mu _{d-1}$ is
the Lebesgue measure on it;

(iii) For each $\alpha\in(0,2)$ and $t\in[0,T]$ 
\begin{equation*}
\inf_{|\xi|=1}\int_{S^{d-1}}|(w,\xi)|^{\alpha}m_{0}^{(\alpha)}(t,w)\mu
_{d-1}(dw)\geq\delta>0.
\end{equation*}

\begin{remark}
\label{r1}\emph{The nondegenerateness assumption $A_{0}$ (iii) holds with
certain $\delta >0$ if, e.g.%
\begin{equation*}
\inf_{t\in \lbrack 0,T],w\in \Gamma }m_{0}^{(\alpha )}(t,w)>0
\end{equation*}%
for a measurable subset $\Gamma \subset S^{d-1}$ of positive Lebesgue
measure (the function }$m_{0}^{(\alpha )}$ \emph{can degenerate on a
substantial set).}
\end{remark}

\noindent\textbf{Assumption} $\mathbf{A.}$ (i) The real-valued random
functions $m^{(\alpha)}(t,y)$ and $l^{(\alpha)}(t,y)$ on $[0,T]\times\mathbf{%
R}_{0}^{d}$ are non-negative and $\mathcal{R}(\mathbb{F})\otimes\mathcal{B}(%
\mathbf{R}^d_0)$-measurable; the real-valued random functions $%
B^{ij}(t)=B^{ji}(t),\ b^i(t),\ i,j=1,\ldots,d,$ on $[0,T]$ are $\mathcal{R}(%
\mathbb{F})$-measurable; the Hilbert space $Y$-valued random functions $%
\sigma^i(t),\ i=1,\ldots,d,$ on $[0,T]$ are $\mathcal{R}(\mathbb{F})$%
-measurable.

\smallskip (ii) $\mathbf{P}$-a.s. for all $t\in \lbrack 0,T],\ y\in \mathbf{R%
}_{0}^{d}$ and $i,j=1,\ldots ,d$ 
\begin{equation*}
m^{(\alpha )}(t,y)+l^{(\alpha )}(t,y)+|B^{ij}(t)|+|b^{i}(t)|+|\sigma
^{i}(t)|_{Y}\leq K
\end{equation*}%
and for all $0<r<R<\infty ,$%
\begin{equation*}
\int_{r\leq |y|\leq R}ym^{(1)}(t,y)\frac{dy}{|y|^{d+1}}=0.
\end{equation*}

(iii) $\mathbf{P}$-a.s. for all $t\in \lbrack 0,T]$ and $y\in \mathbf{R}%
_{0}^{d}$ 
\begin{eqnarray*}
&&m^{(\alpha )}(t,y)-l^{(\alpha )}(t,y)\geq m_{0}^{(\alpha )}(t,y)%
\mbox{\quad if }\alpha \in (0,2), \\
&&\big(B^{ij}(t)-\frac{1}{2}\sigma ^{i}(t)\cdot \sigma ^{j}(t)\big)%
y_{i}y_{j}\geq \delta |y|^{2}\mbox{\quad if }\alpha =2,
\end{eqnarray*}%
where $\delta >0$, the function $m_{0}^{(\alpha )}$ satisfies Assumption A$%
_{0}$ and $\cdot $ denotes the inner product in $Y$.


\begin{remark}
\label{ren4}\emph{Assumption A (iii) is called \textbf{superparabolicity} of
(\ref{intr1}).}
\end{remark}

\begin{definition}
\label{def1}Let $\alpha \in (0,2],\ \beta \in \mathbf{R},\ p\geq 2,\
u_{0}\in \mathbb{B}_{pp}^{\beta +\alpha -\frac{\alpha }{p}}(\mathbf{R}^{d})$
be $\mathcal{F}_{0}$-measurable, $f\in \mathbb{H}_{p}^{\beta }(E),\ \Phi \in 
\mathbb{B}_{p,pp}^{\beta +\alpha -\frac{\alpha }{p}}(E)\cap \mathbb{H}%
_{2,p}^{\beta +\frac{\alpha }{2}}(E),\ g\in \mathbb{\bar{B}}_{p,pp}^{\beta
+\alpha -\frac{\alpha }{p}}(E)\cap \mathbb{\bar{H}}_{2,p}^{\beta +\frac{%
\alpha }{2}}(E)$ and $h\in \mathbb{H}_{p}^{\beta +\alpha /2}(E,Y)$.

We say that $u\in \mathbb{H}_{p}^{\beta +\alpha }(E)$ is a strong solution
to \emph{(\ref{intr1})} if $u(t,\cdot )$ is strongly cadlag in $H_{p}^{\beta
}(\mathbf{R}^{d})$ with respect to $t$, $A^{(\alpha )}u\in \mathbb{H}%
_{p}^{\beta }(E)$ and $\mathbf{P}$-a.s. in $\varphi \in \mathcal{S}(\mathbf{R%
}^{d})$ 
\begin{eqnarray}
d\left\langle {u}(t),\varphi \right\rangle &=&\left\langle A^{(\alpha )}{u(t)%
}-\lambda {u(t)}+{f,\varphi }\right\rangle dt  \label{for3} \\
&&+\int_{\mathbf{R}_{0}^{d}}\left\langle {u}(t-,\cdot +y)-{u}(t-,\cdot
)+g(t,\cdot ,y),\varphi \right\rangle q^{(\alpha )}(dt,dy)1_{\alpha \in
(0,2)}  \notag \\
&&+\int_{U}\left\langle {\Phi }(t,\cdot ,\upsilon ),\varphi \right\rangle
\eta (dt,d\upsilon )+\left\langle 1_{\alpha =2}\sigma ^{i}(t)\partial _{i}{u}%
(t)+{h}(t),\varphi \right\rangle dW_{t}\,,  \notag \\
{u}(0) &=&{u}_{0};  \notag
\end{eqnarray}%
equivalently, in integral form%
\begin{eqnarray*}
\left\langle {u}(t),\varphi \right\rangle &=&\left\langle u_{0},\varphi
\right\rangle +\int_{0}^{t}\left\langle A^{(\alpha )}{u(s)}-\lambda {u(s)}+{%
f,\varphi }\right\rangle ds \\
&&+\int_{0}^{t}\int_{\mathbf{R}_{0}^{d}}\left\langle {u}(s-,\cdot +y)-{u}%
(s-,\cdot )+g(s,\cdot ,y),\varphi \right\rangle q^{(\alpha
)}(ds,dy)1_{\alpha \in (0,2)} \\
&&+\int_{0}^{t}\int_{U}\left\langle {\Phi }(s,\cdot ,\upsilon ),\varphi
\right\rangle \eta (ds,d\upsilon )+\int_{0}^{t}\left\langle 1_{\alpha
=2}\sigma ^{i}(s)\partial _{i}{u}(s)+{h}(s),\varphi \right\rangle dW_{s}\,,
\end{eqnarray*}%
${0\leq t\leq T.}$
\end{definition}

\begin{remark}
\label{re}Since according to Theorem 2.4.2 in \cite{tri} 
\begin{equation*}
\mathbb{B}_{p,pp}^{\beta +\alpha -\frac{\alpha }{p}}(E)\cap \mathbb{H}%
_{2,p}^{\beta +\frac{\alpha }{2}}(E)\subseteq \mathbb{H}_{p,p}^{\beta
}(E)\cap \mathbb{H}_{2,p}^{\beta }(E),\mathbb{\bar{B}}_{p,pp}^{\beta +\alpha
-\frac{\alpha }{p}}(E)\cap \mathbb{\bar{H}}_{2,p}^{\beta +\frac{\alpha }{2}%
}(E)\subseteq \mathbb{\bar{H}}_{p,p}^{\beta }(E)\cap \mathbb{\bar{H}}%
_{2,p}^{\beta }(E),
\end{equation*}%
the assumptions of Definition \ref{def1} imply (see Lemma \ref{ler1} below)
that 
\begin{eqnarray*}
M_{t}^{1} &=&\int_{0}^{t}\int_{\mathbf{R}_{0}^{d}}g(s,\cdot ,y)q^{(\alpha
)}(ds,dy)1_{\alpha \in (0,2)}, \\
M_{t}^{2} &=&\int_{0}^{t}\int_{U}{\Phi }(s,\cdot ,\upsilon )\eta
(ds,d\upsilon ),0\leq t\leq T,
\end{eqnarray*}%
are cadlag $H_{p}^{\beta }(\mathbf{R}^{d})$-valued. According to Theorem 1
in \cite{mir3mol}, 
\begin{equation*}
M_{t}^{3}=\int_{0}^{t}[1_{\alpha =2}\sigma ^{i}(s)\partial _{i}J^{\beta }{u}%
(s)+J^{\beta }{h}(s)]dW_{s},0\leq t\leq T,
\end{equation*}%
is continuous $H_{p}^{\beta }(\mathbf{R}^{d})$-valued. By Corollary \ref%
{corr1} below 
\begin{equation*}
Q(t)=\int_{0}^{t}\int \left[ u(s,\cdot +y)-u(s,\cdot )\right] q^{(\alpha
)}(ds,dy),0\leq t\leq T,
\end{equation*}%
is $H_{p}^{\beta }(\mathbf{R}^{d})$-valued cadlag.
\end{remark}

The first main result concerns the so-called uncorrelated case of (\ref%
{intr1}) defined by \ 
\begin{eqnarray}
du(t,x) &=&\bigl(A^{(\alpha )}u-\lambda u+f\bigr)(t,x)dt  \label{equ2} \\
&&+\int_{U}\Phi (t,x,\upsilon )\eta (dt,d\upsilon )+h(t,x)dW_{t}\text{\quad
in }E,  \notag \\
u(0,x) &=&u_{0}(x)\text{\quad in }\mathbf{R}^{d}.  \notag
\end{eqnarray}%
%
%
%
%
%
%
%
%
%
%
%
%
%
%
%
%
%
%
%
%
%
%
%
%
%
The following statement is a consequence of Theorem \ref{thm15} proved in
Section \ref{sei} below.

\begin{theorem}
\label{main1}Let $\alpha \in (0,2],\ \beta \in \mathbf{R},\ p\geq 2$ and
Assumption~\emph{A} be satisfied with $l^{(\alpha )}=0$ and $\sigma ^{i}=0,\
i=1,\ldots ,d$. Let $u_{0}\in \mathbb{B}_{pp}^{\beta +\alpha -\frac{\alpha }{%
p}}(\mathbf{R}^{d})$ be $\mathcal{F}_{0}$-measurable, $f\in \mathbb{H}%
_{p}^{\beta }(E),\ \Phi \in \mathbb{B}_{p,pp}^{\beta +\alpha -\frac{\alpha }{%
p}}(E)\cap \mathbb{H}_{2,p}^{\beta +\frac{\alpha }{2}}(E)$ and $h\in \mathbb{%
H}_{p}^{\beta +\alpha /2}(E,Y)$.

Then there is a unique strong solution $u\in \mathbb{H}_{p}^{\beta +\alpha
}(E)$ of \emph{(\ref{equ2})}. Moreover, there is a constant $C=C(\alpha
,\beta ,p,d,T,K,\delta )$ such that 
\begin{eqnarray}
|u|_{\mathbb{H}_{p}^{\beta +\alpha }(E)} &\leq &C\Bigl(|u_{0}|_{\mathbb{B}%
_{pp}^{\beta +\alpha -\frac{\alpha }{p}}(E)}+|f|_{\mathbb{H}_{p}^{\beta }(E)}
\label{mf1} \\
&&+|\Phi |_{\mathbb{H}_{2,p}^{\beta +\frac{\alpha }{2}}(E)}+|\Phi |_{\mathbb{%
B}_{p,pp}^{\beta +\alpha -\frac{\alpha }{p}}(E)}+|h|_{\mathbb{H}_{p}^{\beta
+\alpha /2}(E,Y)}\Bigr).  \notag
\end{eqnarray}
\end{theorem}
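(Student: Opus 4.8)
The plan is to prove the a priori estimate (\ref{mf1}) first and then deduce uniqueness and existence from it, using the model analysis of Section~4 and the approximation results of Section~3. The first reduction is in the smoothness index $\beta$: since the coefficients of $A^{(\alpha)}$ do not depend on $x$, for each fixed $(t,\omega)$ the operator acts as a Fourier multiplier in the space variable with symbol $\psi^{(\alpha)}(t,\xi)$, and hence commutes with $J^{\beta}=(I-\Delta)^{\beta/2}$. Testing (\ref{equ2}) against $J^{\beta}\varphi$ shows that $v=J^{\beta}u$ solves the same type of equation with $\beta=0$ and data $J^{\beta}u_{0},\,J^{\beta}f,\,J^{\beta}\Phi,\,J^{\beta}h$; the mapping properties of $J^{\beta}$ on $\mathbb{B}_{pp}^{\gamma}$, $\mathbb{H}_{p}^{\gamma}$ and $\mathbb{H}_{2,p}^{\gamma}$ give equivalence of the relevant norms, so it suffices to bound $|u|_{\mathbb{H}_{p}^{\alpha}(E)}$ by the data in the case $\beta=0$.

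For the estimate I would use linearity of (\ref{equ2}) to split $u=u_{0}'+u_{f}+u_{h}+u_{\Phi}$, each summand solving the equation with only one of $u_{0},f,h,\Phi$ nonzero; the absence of the $q^{(\alpha)}$ term in the uncorrelated case means the two noises enter only as pure stochastic convolutions, uncorrelated with $A^{(\alpha)}$. The deterministic part $u_{0}'+u_{f}$ is controlled by the maximal regularity estimate for $\partial_{t}u=A^{(\alpha)}u-\lambda u+f$ established in Section~4, where Assumption $A_{0}$(iii) yields the decay $\func{Re}\,\psi^{(\alpha)}(t,\xi)\leq -c|\xi|^{\alpha}$ responsible for the gain of $\alpha$ derivatives. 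The Wiener part $u_{h}$ is handled by the $L_{p}$ estimate for stochastic convolutions against $W$ (Theorem~1 of \cite{mir3mol}), matching the gain of $\alpha/2$ in $h\in\mathbb{H}_{p}^{\alpha/2}(E,Y)$. The jump part $u_{\Phi}$ is the analogue for the martingale measure $\eta$: localizing in frequency by the blocks $\varphi_{j}\ast u_{\Phi}$ and applying the Burkholder--Davis--Gundy inequality in $L_{p}$ produces two kinds of contributions, one governed by the $L_{2}$-in-$\upsilon$ norm (from the predictable quadratic variation) and one by the $L_{p}$-in-$\upsilon$ norm (from the sum of $p$-th powers of the jumps), which is exactly why the hypothesis on $\Phi$ couples $\mathbb{H}_{2,p}^{\alpha/2}(E)$ and $\mathbb{B}_{p,pp}^{\alpha-\alpha/p}(E)$.

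To pass from the non-random $m^{(\alpha)}$ of Section~4 to the progressively measurable $m^{(\alpha)}$ of Assumption~A, I would approximate $m^{(\alpha)}(t,y)$ by simple-in-time processes $\sum_{i}m^{(\alpha)}(t_{i},y)1_{[t_{i},t_{i+1})}(t)$, each value $\mathcal{F}_{t_{i}}$-measurable; on each subinterval one conditions on $\mathcal{F}_{t_{i}}$ and invokes the non-random estimate, the constant $C=C(\alpha,\beta,p,d,T,K,\delta)$ being uniform since $m^{(\alpha)}$ enters only through the bound $K$ and the nondegeneracy constant $\delta$ of $A_{0}$. Existence then proceeds in the standard way: for smooth input and non-random $m^{(\alpha)}$ a solution is produced in Section~4; the approximation results of Section~3 give smooth data converging to $u_{0},f,\Phi,h$ in the respective norms; the uniform estimate (\ref{mf1}) makes the corresponding solutions Cauchy in $\mathbb{H}_{p}^{\beta+\alpha}(E)$, and the limit is verified to satisfy the integral identity of Definition~\ref{def1} by passing to the limit in each term. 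Uniqueness is immediate, since the difference of two solutions has zero data and (\ref{mf1}) forces it to vanish.

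The main obstacle is the a priori estimate for the two stochastic convolutions together with the fact, stressed in the introduction, that $\psi^{(\alpha)}(t,\xi)$ is in general only measurable in $\xi$ (as $m^{(\alpha)}$ is merely measurable in $y$), so the classical Mikhlin--H\"{o}rmander multiplier theorem cannot be applied to the semigroup multipliers $\exp(\int_{s}^{t}(\psi^{(\alpha)}(r,\xi)-\lambda)dr)$. Recovering the decay and the derivative bounds needed for such rough symbols is precisely the content of the model analysis of Section~4, where they are extracted from the lower bound $m^{(\alpha)}\geq m_{0}^{(\alpha)}$ with the smooth, zero-homogeneous, nondegenerate $m_{0}^{(\alpha)}$ of Assumption $A_{0}$; the work here is to combine those model estimates with the $L_{p}$ maximal inequalities for the stochastic integrals against $\eta$ and $W$, and with the freezing and approximation steps, so as to reach (\ref{mf1}) for random coefficients and general data.
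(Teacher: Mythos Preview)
Your outline is broadly on the right track (reduce in $\beta$, use the Duhamel representation $T^{\lambda}u_{0}+R_{\lambda}f+\tilde R_{\lambda}\Phi+\bar R_{\lambda}h$ and the stochastic–convolution estimates of Section~4, then approximate the data via Section~3), but there is a genuine gap in the step where you pass from the model operator of Section~4 to the operator $A^{(\alpha)}$ of Assumption~A.

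Section~4 does \emph{not} handle a ``non-random $m^{(\alpha)}$'' in the generality of Assumption~A; it handles only the operator $A_{0}^{(\alpha)}$ built from the \emph{smooth, zero-homogeneous} density $m_{0}^{(\alpha)}$ of Assumption~$A_{0}$ (see Theorem~\ref{le3} and Lemmas~\ref{le1}--\ref{le2}). The constants there depend on the $C^{d_{0}}$ bounds of $m_{0}^{(\alpha)}$, not merely on an $L_{\infty}$ bound $K$. Consequently your proposed time-discretisation and conditioning on $\mathcal F_{t_{i}}$ cannot work as stated: freezing $m^{(\alpha)}(t_{i},y)$ produces a merely measurable, inhomogeneous function of $y$, to which none of the estimates of Section~4 apply. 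Your final paragraph asserts that Section~4 ``extracts'' the needed bounds from the lower bound $m^{(\alpha)}\ge m_{0}^{(\alpha)}$; it does not---that lower bound is never used in Section~4.

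The paper bridges the gap $A_{0}^{(\alpha)}\rightsquigarrow A^{(\alpha)}$ by an entirely different device (Lemma~\ref{le16}, step~$2^{0}$). Since $m^{(\alpha)}-m_{0}^{(\alpha)}\ge 0$, the measure $(m^{(\alpha)}-m_{0}^{(\alpha)})(t,y)\,|y|^{-d-\alpha}dy$ is a L\'evy kernel; on an enlarged probability space one builds an independent process $Y_{t}^{(\alpha)}$ with this compensator (resp.\ diffusion $B-\delta I$ when $\alpha=2$), solves the $A_{0}^{(\alpha)}$-equation~(\ref{eq22}) with data shifted by $-Y_{t}^{(\alpha)}$, applies the It\^o--Wentzell formula to $w(t,x+Y_{t}^{(\alpha)})$, and then \emph{averages} over the auxiliary space: $u(t,x)=\overline{\mathbf E}\,w(t,x+Y_{t}^{(\alpha)})$. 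The averaging kills the extra martingale terms, and translation invariance of all the norms transports the Section~4 estimate for $w$ to the desired estimate for $u$. This is where the lower bound $m^{(\alpha)}\ge m_{0}^{(\alpha)}$ is actually used, and it simultaneously handles the randomness of $m^{(\alpha)}$ and its lack of smoothness in $y$; no conditioning or freezing is needed.

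A smaller point: your uniqueness argument (``(\ref{mf1}) forces the difference to vanish'') presupposes that (\ref{mf1}) is an a~priori estimate for \emph{any} strong solution, whereas the construction above yields it only for the particular solution produced. The paper proves uniqueness separately: mollify $J^{\beta}u$ in $x$, observe that the mollification lies in $\mathfrak D_{p}(E)$ and solves the homogeneous equation, and then invoke the smooth-case uniqueness of Lemma~\ref{le16}, which in turn is obtained by a probabilistic representation via a backward process $Z_{t}^{(\alpha)}$.
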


\begin{remark}
\emph{According to the embedding theorem (see Theorem 6.4.4 in \cite{BeLo76}%
), the estimate \ (\ref{mf1}) holds with $|u_{0}|_{\mathbb{B}_{pp}^{\kappa
}(E)}$ and $|\Phi |_{\mathbb{B}_{p,pp}^{\kappa }(E)}$ replaced by $|u_{0}|_{%
\mathbb{H}_{p}^{\kappa }(E)}$ and $|\Phi |_{\mathbb{H}_{p,p}^{\kappa }(E)}$,
where $\kappa ={\beta +\alpha -\frac{\alpha }{p}}$.}
\end{remark}

Theorem \ref{main1} covers the deterministic equation%
\begin{eqnarray}
\partial _{t}u(t,x) &=&A^{(\alpha )}u(t,x)-\lambda u(t,x)+f(t,x)\text{\quad
in }E,  \label{equ3} \\
u(0,x) &=&u_{0}(x)\text{\quad in }\mathbf{R}^{d}  \notag
\end{eqnarray}%
with non-random coefficients and input functions. The following obvious
consequence of Theorem \ref{main1} holds.

\begin{corollary}
\label{corn2}Let $\alpha\in(0,2],\ \beta \in \mathbf{R},\ p\geq 2$ and for
all $t\in [0,T],\ y\in \mathbf{R}_{0}^{d},$%
\begin{eqnarray*}
&&m^{(\alpha )}(t,y)+|B^{ij}(t)|+|b^{i}(t)|\leq K,\quad i,j=1,\ldots,d, \\
&&m^{(\alpha )}(t,y) \geq m_{0}^{(\alpha )}(t,y)\text{\quad if }\alpha \in
(0,2)
\end{eqnarray*}%
and 
\begin{equation*}
\hspace*{-3.3cm}B^{ij}(t)y _{i}y _{j} \geq \delta |y |^{2}\text{\quad if }%
\alpha =2,
\end{equation*}%
where $m_{0}^{(\alpha )}$ satisfies Assumption \emph{A}$_0$. Let $u_{0}\in {B%
}_{pp}^{\beta +\alpha -\frac{\alpha }{p}}(\mathbf{R}^d)$ and $f\in {H}%
_{p}^{\beta }(E)$.

Then there is a unique strong solution $u\in {H}_{p}^{\beta +\alpha }(E)$ of 
\emph{(\ref{equ3})}. Moreover, there is a constant $C=C(\alpha ,\beta
,p,d,T,K,\delta )$ such that 
\begin{equation*}
|u|_{{H}_{p}^{\beta +\alpha }(E)}\leq C\left( |u_{0}|_{{B}_{pp}^{\beta
+\alpha -\frac{\alpha }{p}}(E)}+|f|_{{H}_{p}^{\beta }(E)}\right) .
\end{equation*}
\end{corollary}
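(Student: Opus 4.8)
The plan is to derive Corollary~\ref{corn2} as the non-random, degenerate specialization of Theorem~\ref{main1}, so that no new analysis is required. First I would realize the deterministic Cauchy problem~(\ref{equ3}) as an instance of the uncorrelated stochastic equation~(\ref{equ2}): keep the given non-random coefficients $m^{(\alpha)}$, $B^{ij}$, $b^{i}$ and inputs $u_{0}$, $f$, and switch off the remaining stochastic data by setting $\Phi\equiv0$ and $h\equiv0$. Since the uncorrelated equation~(\ref{equ2}) carries no $q^{(\alpha)}$ term, with $\Phi=h=0$ it collapses to $du=(A^{(\alpha)}u-\lambda u+f)\,dt$, which is exactly~(\ref{equ3}).

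Next I would check that the hypotheses of the corollary are precisely those of Theorem~\ref{main1} in this case. Assumption~A with $l^{(\alpha)}=0$ and $\sigma^{i}=0$ requires the bound $m^{(\alpha)}+|B^{ij}|+|b^{i}|\le K$, together with the superparabolicity $m^{(\alpha)}\ge m_{0}^{(\alpha)}$ for $\alpha\in(0,2)$ and $B^{ij}y_{i}y_{j}\ge\delta|y|^{2}$ for $\alpha=2$, where $m_{0}^{(\alpha)}$ obeys Assumption~A$_{0}$ (and, when $\alpha=1$, the cancellation of $m^{(1)}$ from Assumption~A(ii)); these are literally the conditions imposed in Corollary~\ref{corn2}. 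On the input side, a non-random $u_{0}\in B_{pp}^{\kappa}(\mathbf{R}^{d})$, $\kappa=\beta+\alpha-\frac{\alpha}{p}$, viewed as a random function constant in $\omega$ is $\mathcal{F}_{0}$-measurable with $|u_{0}|_{\mathbb{B}_{pp}^{\kappa}(\mathbf{R}^{d})}=|u_{0}|_{B_{pp}^{\kappa}(\mathbf{R}^{d})}$, a non-random $f\in H_{p}^{\beta}(E)$ lies in $\mathbb{H}_{p}^{\beta}(E)$ with equal norm, and $\Phi=h=0$ trivially satisfy their requirements. Thus Theorem~\ref{main1} applies and yields a unique strong solution $u\in\mathbb{H}_{p}^{\beta+\alpha}(E)$; with $\Phi=h=0$ the estimate~(\ref{mf1}) reduces to $|u|_{\mathbb{H}_{p}^{\beta+\alpha}(E)}\le C(|u_{0}|_{B_{pp}^{\kappa}(\mathbf{R}^{d})}+|f|_{H_{p}^{\beta}(E)})$ with $C=C(\alpha,\beta,p,d,T,K,\delta)$.

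The one step that is more than bookkeeping is showing this $u$ is non-random. Because there is no $q^{(\alpha)}$ term and $\Phi=h=0$, the defining identity for a strong solution contains no stochastic integrals: $\mathbf{P}$-a.s., for all $\varphi\in\mathcal{S}(\mathbf{R}^{d})$, $\langle u(t),\varphi\rangle=\langle u_{0},\varphi\rangle+\int_{0}^{t}\langle A^{(\alpha)}u(s)-\lambda u(s)+f,\varphi\rangle\,ds$, a pathwise linear integral equation in $H_{p}^{\beta}(\mathbf{R}^{d})$ whose data $u_{0},f$ and operator $A^{(\alpha)}$ are all non-random. Hence for almost every $\omega$ the trajectory $u(\cdot,\omega)$ solves one and the same deterministic problem, and by the uniqueness in Theorem~\ref{main1} (any two non-random solutions, regarded as constant random functions, must coincide) $u$ is independent of $\omega$. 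Therefore $u\in H_{p}^{\beta+\alpha}(E)$ with $|u|_{\mathbb{H}_{p}^{\beta+\alpha}(E)}=|u|_{H_{p}^{\beta+\alpha}(E)}$, and the asserted existence, uniqueness and estimate follow at once. I expect no genuine obstacle beyond this determinism argument, whose essential ingredient is the uniqueness already established in Theorem~\ref{main1}.
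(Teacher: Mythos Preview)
Your proposal is correct and matches the paper's approach: the paper simply declares Corollary~\ref{corn2} an ``obvious consequence of Theorem~\ref{main1}'' without further argument, and your reduction (set $\Phi=h=0$, observe Assumption~A with $l^{(\alpha)}=\sigma^{i}=0$ is exactly the corollary's hypotheses, then invoke uniqueness to conclude the solution is non-random) is precisely the intended specialization.
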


Given $g\in \mathbb{\bar{B}}_{p,pp}^{\beta +\alpha -\frac{\alpha }{p}%
}(E)\cap \mathbb{\bar{H}}_{2,p}^{\beta +\frac{\alpha }{2}}(E)$ we denote 
\begin{equation*}
\Lambda g(t,x,y)=g(t,x-y,y),(t,x)\in E,y\in \mathbf{R}_{0}^{d}.
\end{equation*}%
For a $g,\Lambda g\in \mathbb{\bar{B}}_{p,pp}^{\beta +\alpha -\frac{\alpha }{%
p}}(E)\cap \mathbb{\bar{H}}_{2,p}^{\beta +\frac{\alpha }{2}}(E)$ we denote%
\begin{equation*}
Ig(t,x)=\int_{\mathbf{R}_{0}^{d}}(\Lambda g-g)(t,x,y)l^{(\alpha )}(t,y)\frac{%
dy}{|y|^{d+\alpha }}(t,x)\in E,
\end{equation*}%
assuming that 
\begin{equation}
Ig(t,x)=\lim_{\varepsilon \rightarrow 0}\int_{|y|>\varepsilon }(\Lambda
g-g)(t,x,y)l^{(\alpha )}(t,y)\frac{dy}{|y|^{d+\alpha }}(t,x)\in E,
\label{form1}
\end{equation}%
is well defined as a limit in $\mathbb{H}_{p}^{\beta }(E)$ (we write simply
that $Ig\in \mathbb{H}_{p}^{\beta }(E)$ in this case).

In the general case the following statement holds for (\ref{intr1}).

\begin{theorem}
\label{main 2}Let $\alpha \in (0,2],\ \beta \in \mathbf{R},\ p\geq 2$ and
Assumption~\emph{A} be satisfied. Let $u_{0}\in \mathbb{B}_{pp}^{\beta
+\alpha -\frac{\alpha }{p}}(\mathbf{R}^{d})$ be $\mathcal{F}_{0}$%
-measurable, $f,Ig\in \mathbb{H}_{p}^{\beta }(E),\Phi \in \mathbb{B}%
_{p,pp}^{\beta +\alpha -\frac{\alpha }{p}}(E)\cap \mathbb{H}_{2,p}^{\beta +%
\frac{\alpha }{2}}(E)$, $g,\Lambda g\in \mathbb{\bar{B}}_{p,pp}^{\beta
+\alpha -\frac{\alpha }{p}}(E)\cap \mathbb{\bar{H}}_{2,p}^{\beta +\frac{%
\alpha }{2}}(E)$ and $h\in \mathbb{H}_{p}^{\beta +\alpha /2}(E,Y)$.

Then there is a unique strong solution $u\in \mathbb{H}_{p}^{\beta +\alpha
}(E)$ of \emph{(\ref{intr1})}. Moreover, there is a constant $C=C(\alpha
,\beta ,p,d,T,K,\delta )$ such that 
\begin{eqnarray*}
|u|_{\mathbb{H}_{p}^{\beta +\alpha }(E)} &\leq &C\Bigl(|u_{0}|_{\mathbb{B}%
_{pp}^{\beta +\alpha -\frac{\alpha }{p}}(E)}+|f+Ig|_{\mathbb{H}_{p}^{\beta
}(E)} \\
&&+|h|_{\mathbb{H}_{p}^{\beta +\alpha /2}(E,Y)}+|\Phi |_{\mathbb{H}%
_{2,p}^{\beta +\frac{\alpha }{2}}(E)}+|\Phi |_{\mathbb{B}_{p,pp}^{\beta
+\alpha -\frac{\alpha }{p}}(E)} \\
&&+|\Lambda g|_{\mathbb{\bar{H}}_{2,p}^{\beta +\frac{\alpha }{2}%
}(E)}+|\Lambda g|_{\mathbb{\bar{B}}_{p,pp}^{\beta +\alpha -\frac{\alpha }{p}%
}(E)}\Bigr).
\end{eqnarray*}
\end{theorem}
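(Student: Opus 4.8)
The plan is to reduce the general equation~(\ref{intr1}) to the uncorrelated case~(\ref{equ2}) already solved in Theorem~\ref{main1}, by absorbing the nonlocal martingale-measure term $\int_{\mathbf R_0^d}[u(t-,x+y)-u(t-,x)+g]q^{(\alpha)}(dt,dy)$ into a modification of the operator and the free term. The key observation is that Assumption~A(iii) splits the symbol: writing $m^{(\alpha)}=l^{(\alpha)}+(m^{(\alpha)}-l^{(\alpha)})$, the part weighted by $l^{(\alpha)}$ is exactly the compensator of the point measure $p^{(\alpha)}$ driving the $q^{(\alpha)}$-integral, while the remainder $m^{(\alpha)}-l^{(\alpha)}\geq m_0^{(\alpha)}$ retains the full superparabolicity. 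So I would decompose $A^{(\alpha)}u = \wt A^{(\alpha)}u + \int_{\mathbf R_0^d}\nabla_y^{\alpha}u(t,x)\,l^{(\alpha)}(t,y)\frac{dy}{|y|^{d+\alpha}}$, where $\wt A^{(\alpha)}$ carries the nondegenerate weight $m^{(\alpha)}-l^{(\alpha)}$ and hence satisfies the hypotheses of Theorem~\ref{main1} with $l^{(\alpha)}=0$.

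The next step is to handle the interaction between the $l^{(\alpha)}$-part of the drift and the jump martingale term. Collecting the $dt$-compensator of the $q^{(\alpha)}$-integral together with the $l^{(\alpha)}$-weighted piece of $A^{(\alpha)}$, the nonlocal correction is governed precisely by the operator $Ig$ introduced before the statement: the shift operator $\Lambda g(t,x,y)=g(t,x-y,y)$ arises because compensating $\int[u(t-,x+y)-u(t-,x)]q^{(\alpha)}(dt,dy)$ against the $l^{(\alpha)}$-weighted Laplacian produces terms in $g$ evaluated at the shifted argument. This is why the theorem requires $g,\Lambda g\in \mathbb{\bar B}_{p,pp}^{\beta+\alpha-\frac{\alpha}{p}}(E)\cap\mathbb{\bar H}_{2,p}^{\beta+\frac{\alpha}{2}}(E)$ and $Ig\in\mathbb H_p^{\beta}(E)$, and why the free term in the reduced equation is $f-Ig$ rather than $f$. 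I would verify that after this rearrangement the equation takes the form~(\ref{equ2}) with operator $\wt A^{(\alpha)}$, free term $f-Ig$, the same $\Phi,h$, and the same $u_0$, so that Theorem~\ref{main1} applies and delivers a unique strong solution $u\in\mathbb H_p^{\beta+\alpha}(E)$ satisfying the estimate with $|f-Ig|_{\mathbb H_p^{\beta}(E)}$ in place of $|f|$.

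Two points require care. First, one must confirm that the $q^{(\alpha)}$-integral of $[u(t-,x+y)-u(t-,x)]$ is genuinely well defined as an $H_p^{\beta}(\mathbf R^d)$-valued cadlag process for the solution $u\in\mathbb H_p^{\beta+\alpha}(E)$ found this way; this is supplied by Corollary~\ref{corr1} and Remark~\ref{re}, which show that $Q(t)$ is $H_p^{\beta}$-valued cadlag. Second, the reduction must be shown to be a genuine equivalence: a process $u$ solves~(\ref{intr1}) in the sense of Definition~\ref{def1} if and only if it solves the reduced uncorrelated equation, so that existence, uniqueness, and the estimate all transfer. Testing against $\varphi\in\mathcal S(\mathbf R^d)$ and using that $\nu$ and $p^{(\alpha)}$ have no common jumps (so the two martingale-measure integrals are orthogonal and may be treated separately) makes this rearrangement legitimate at the level of the weak formulation~(\ref{for3}).

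The main obstacle I expect is controlling the operator $Ig$ and, correspondingly, the regularity transfer for the shifted term $\Lambda g$. The integral~(\ref{form1}) defining $Ig$ is only conditionally convergent (a principal-value limit in $\mathbb H_p^{\beta}(E)$), and one must bound $|Ig|_{\mathbb H_p^{\beta}(E)}$ by the stated norms of $\Lambda g$ and $g$ uniformly in the truncation $\varepsilon$. This is essentially a commutator/singular-integral estimate: the difference $\Lambda g - g$ against the $\alpha$-stable kernel $l^{(\alpha)}(t,y)|y|^{-d-\alpha}$ behaves like a fractional-order operator of order $\alpha$ acting in $x$, so the $\beta+\alpha-\frac{\alpha}{p}$ and $\beta+\frac{\alpha}{2}$ Besov/Sobolev regularity of $g$ and $\Lambda g$ is exactly what is needed to land $Ig$ in $\mathbb H_p^{\beta}(E)$. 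Once this estimate is in hand, the remaining work is the bookkeeping of collecting all the resulting norms into the final bound, which is routine given the linearity of the correspondence $g\mapsto Ig$ and the a priori estimate of Theorem~\ref{main1}.
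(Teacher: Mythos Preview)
Your reduction has a genuine gap: you cannot rearrange the equation~(\ref{intr1}) algebraically into the form~(\ref{equ2}). The term
\[
\int_{\mathbf R_0^d}\bigl[u(t-,x+y)-u(t-,x)\bigr]\,q^{(\alpha)}(dt,dy)
\]
is a martingale integral that depends on the unknown $u$; splitting $m^{(\alpha)}=l^{(\alpha)}+(m^{(\alpha)}-l^{(\alpha)})$ in the drift does nothing to remove it. Writing $q^{(\alpha)}=p^{(\alpha)}-l^{(\alpha)}\frac{dy}{|y|^{d+\alpha}}dt$ only trades the compensated integral for an uncompensated jump sum $\int[u(x+y)-u(x)]\,p^{(\alpha)}(dt,dy)$ plus a drift, and the jump sum still depends on $u$. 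Likewise, for $\alpha=2$ the term $\sigma^{i}\partial_i u\,dW_t$ depends on $u$ and has no counterpart in~(\ref{equ2}). So after your ``rearrangement'' the equation is not of the form~(\ref{equ2}) with data independent of $u$, and Theorem~\ref{main1} cannot be invoked directly. Your heuristic for why $Ig$ appears is also off: the shifted function $\Lambda g(t,x,y)=g(t,x-y,y)$ does not arise from compensator bookkeeping in the original coordinates.

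The paper's mechanism is a random change of variables, not a drift rearrangement. One introduces the $\mathbb F$-adapted process $Y_t^{(\alpha)}$ driven by $q^{(\alpha)}$ (respectively by $\sigma\,dW$ when $\alpha=2$), solves for $w$ an auxiliary equation of type~(\ref{pr3}) with operator $\wt A^{(\alpha)}$ (weight $m^{(\alpha)}-l^{(\alpha)}$), free term $f-Ig$ and jump input $\Lambda g$, and then applies the It\^o--Wentzell formula to $w(t,x+Y_t^{(\alpha)})$. The coordinate shift converts the $Y^{(\alpha)}$-jumps into exactly the missing $u$-dependent term $\int[u(\cdot+y)-u]\,q^{(\alpha)}(dt,dy)$ (and the $\sigma^i\partial_i u\,dW_t$ term when $\alpha=2$), while the $g$-input evaluated at the shifted argument $x-y$ produces $\Lambda g$ and its compensator produces $Ig$. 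This is carried out first for smooth inputs (Proposition~\ref{prop2}) and then extended by approximation. The It\^o--Wentzell step is the essential idea missing from your outline.
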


\section{Approximation of input functions}

Let a non-negative function $\zeta\in C_{0}^{\infty }(\mathbf{R}^{d})$ be
such that $\zeta(x)=0$ if $|x|\geq 1$ and $\int \zeta(x) dx=1$. For $%
\varepsilon \in (0,1)$ we set 
\begin{equation*}
\zeta_{\varepsilon }(x)=\varepsilon ^{-d}\zeta(x/\varepsilon ),\quad x\in 
\mathbf{R}^{d}.
\end{equation*}%
Let $V$ be a Banach space with a norm $|\cdot|_V$.

\begin{lemma}
\label{cnew}Let $\beta\in\mathbf{R},\ p\geq1$ and $u\in A$, where $%
A=B_{pp}^{\beta }(\mathbf{R}^{d},V)$ or $H_{p}^{\beta }(\mathbf{R}^{d},V)$.
Let $u_{\varepsilon }=u\ast \zeta_{\varepsilon }$.

Then $|u_{\varepsilon }-u|_{A} \rightarrow 0 \mbox{ as }\varepsilon \to 0$.
Moreover, for every $\varepsilon $ and multiindex $\gamma \in \mathbf{N}%
_{0}^{d}$ there is a constant $C$ not depending on $u$ such that%
\begin{equation*}
\sup_{x}|\partial ^{\gamma }u_{\varepsilon }(x)|_V + |\partial ^{\gamma
}u_{\varepsilon }|_{V,p} \leq C|u|_{A}.
\end{equation*}
\end{lemma}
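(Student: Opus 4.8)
The plan is to reduce everything to two elementary facts: that convolution with $\zeta_{\varepsilon}$ commutes with the Fourier multipliers $J^{\beta}=(I-\Delta)^{\beta/2}$ and $\varphi_{j}\ast(\cdot)$ that define the two norms, and that for $f\in L_{p}(\mathbf{R}^{d},V)$ and a scalar kernel $g$ one has the Young--Minkowski bounds $|f\ast g|_{V,p}\leq|f|_{V,p}\int_{\mathbf{R}^{d}}|g|\,dz$ and $\sup_{x}|f\ast g(x)|_{V}\leq|f|_{V,p}(\int_{\mathbf{R}^{d}}|g|^{p^{\prime}}dz)^{1/p^{\prime}}$, with $p^{\prime}=p/(p-1)$; both hold for an arbitrary Banach space $V$. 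Since $\zeta_{\varepsilon}\geq0$ and $\int\zeta_{\varepsilon}\,dz=1$, the first bound already shows that $u\mapsto u\ast\zeta_{\varepsilon}$ is a contraction on $L_{p}(\mathbf{R}^{d},V)$ and, after commuting with $J^{\beta}$ or with $\varphi_{j}\ast$, on $H_{p}^{\beta}(\mathbf{R}^{d},V)$ and $B_{pp}^{\beta}(\mathbf{R}^{d},V)$.

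For the convergence $|u_{\varepsilon}-u|_{A}\to0$ I would argue by frequency pieces. In the Sobolev case set $w=J^{\beta}u\in L_{p}(\mathbf{R}^{d},V)$; then $|u_{\varepsilon}-u|_{H_{p}^{\beta}}=|w\ast\zeta_{\varepsilon}-w|_{V,p}$, which tends to $0$ by the standard fact that mollification converges in $L_{p}(\mathbf{R}^{d},V)$ (continuity of translation in $L_{p}(V)$ together with Minkowski's inequality). In the Besov case put $g_{j}=\varphi_{j}\ast u\in L_{p}(\mathbf{R}^{d},V)$, so that $|u_{\varepsilon}-u|_{B_{pp}^{\beta}}^{p}=\sum_{j\geq0}2^{j\beta p}|g_{j}\ast\zeta_{\varepsilon}-g_{j}|_{V,p}^{p}$. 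Each summand tends to $0$ as above and is dominated by $2^{p}\,2^{j\beta p}|g_{j}|_{V,p}^{p}$, whose sum equals $2^{p}|u|_{B_{pp}^{\beta}}^{p}<\infty$; dominated convergence for series gives the claim.

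For the uniform bounds the Sobolev case is immediate. Because $J^{\pm\beta}$ commutes with convolution, $\partial^{\gamma}u_{\varepsilon}=u\ast\partial^{\gamma}\zeta_{\varepsilon}=(J^{\beta}u)\ast\Psi$ with $\Psi=J^{-\beta}\partial^{\gamma}\zeta_{\varepsilon}$. Its transform $\mathcal{F}\Psi(\xi)=(1+|\xi|^{2})^{-\beta/2}(i\xi)^{\gamma}\,\mathcal{F}\zeta(\varepsilon\xi)$ is the product of the rapidly decreasing $\mathcal{F}\zeta(\varepsilon\cdot)$ with a smooth symbol of polynomial growth, so $\Psi\in\mathcal{S}(\mathbf{R}^{d})\subset L_{1}\cap L_{p^{\prime}}$. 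Young's and Hölder's inequalities then give $|\partial^{\gamma}u_{\varepsilon}|_{V,p}\leq|u|_{H_{p}^{\beta}}\int|\Psi|\,dz$ and $\sup_{x}|\partial^{\gamma}u_{\varepsilon}(x)|_{V}\leq|u|_{H_{p}^{\beta}}(\int|\Psi|^{p^{\prime}}dz)^{1/p^{\prime}}$, with constants depending on $\varepsilon,\gamma,\beta,p,d$ but not on $u$.

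The Besov case is where the real work lies. Choose auxiliary functions $\widetilde{\varphi}_{j}$ with $\mathcal{F}\widetilde{\varphi}_{j}\equiv1$ on $\mathrm{supp}\,\mathcal{F}\varphi_{j}$ and supported in a fixed dilate of that annulus, so that $\varphi_{j}\ast u=\widetilde{\varphi}_{j}\ast(\varphi_{j}\ast u)$. Writing $u=\sum_{j}\varphi_{j}\ast u$ and $\psi=\partial^{\gamma}\zeta_{\varepsilon}$, we obtain $\partial^{\gamma}u_{\varepsilon}=\sum_{j}(\varphi_{j}\ast u)\ast\Psi_{j}$ with $\Psi_{j}=\widetilde{\varphi}_{j}\ast\psi$; Young/Hölder applied to each piece, followed by Hölder in $j$ against the Besov weights, reduces both bounds to showing $\big(\sum_{j\geq0}2^{-j\beta p^{\prime}}|\Psi_{j}|_{L_{q}}^{p^{\prime}}\big)^{1/p^{\prime}}<\infty$ for $q\in\{1,p^{\prime}\}$ (with the usual supremum when $p=1$). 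The crux is a rapid-decay estimate: since $\mathcal{F}\Psi_{j}(\xi)=\mathcal{F}\widetilde{\varphi}_{j}(\xi)(i\xi)^{\gamma}\,\mathcal{F}\zeta(\varepsilon\xi)$ is supported in $|\xi|\sim2^{j}$, where $|\mathcal{F}\zeta(\varepsilon\xi)|\leq C_{N}(\varepsilon2^{j})^{-N}$ for every $N$ by the Schwartz property of $\mathcal{F}\zeta$, a weighted Plancherel bound (controlling $|\Psi_{j}|_{L_{1}}$ through $|(1-\Delta_{\xi})^{m}\mathcal{F}\Psi_{j}|_{L_{2}}$ on the annulus, and $|\Psi_{j}|_{L_{\infty}}$ through $\int|\mathcal{F}\Psi_{j}|\,d\xi$, then interpolating) yields $|\Psi_{j}|_{L_{q}}\leq C_{M}\,2^{-jM}$ for every $q\in[1,\infty]$ and every $M$. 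This faster-than-exponential decay defeats the weight $2^{-j\beta p^{\prime}}$ for arbitrary $\beta\in\mathbf{R}$ (in particular for negative $\beta$, where the weight grows), so the series sums to a finite constant independent of $u$, while the single low-frequency term $j=0$ is harmless. I expect this decay estimate, and the bookkeeping needed to make it uniform in $j$ with a constant independent of $u$, to be the main obstacle.
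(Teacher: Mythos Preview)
Your argument is correct. The convergence $|u_{\varepsilon}-u|_{A}\to0$ and the Sobolev uniform bound are handled exactly as in the paper: commute the mollifier with $J^{\beta}$ (resp.\ with $\varphi_{j}\ast$) and appeal to $L_{p}$-continuity of mollification (with dominated convergence in $j$ for the Besov case).

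The Besov uniform bound is where you and the paper diverge. The paper writes $J^{m}u_{\varepsilon}=\sum_{j}J^{-l}u\ast\varphi_{j}\ast J^{m+l}\zeta_{\varepsilon}$, pulls out the single Schwartz kernel $J^{m+l}\zeta_{\varepsilon}$, and then invokes the Bernstein-type estimate $|J^{-l}u\ast\varphi_{j}|_{V,p}\leq C\,2^{-lj}|u\ast\varphi_{j}|_{V,p}$ (Lemma~6.2.1 in Bergh--L\"ofstr\"om). Combined with the trivial $|u\ast\varphi_{j}|_{V,p}\leq 2^{-\beta j}|u|_{B_{pp}^{\beta}}$, this gives a convergent geometric series as soon as $l+\beta>0$. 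Your route instead keeps the weights on the kernels $\Psi_{j}=\widetilde{\varphi}_{j}\ast\partial^{\gamma}\zeta_{\varepsilon}$ and extracts super-polynomial decay $|\Psi_{j}|_{L_{q}}\leq C_{M}2^{-jM}$ from the Schwartz falloff of $\mathcal{F}\zeta$ on the annulus $|\xi|\sim2^{j}$. Both work; the paper's version is shorter because the decay is packaged into a single cited lemma and a free parameter $l$, while yours is self-contained but requires the weighted-Plancherel bookkeeping you flagged. One small point: your claim $|\Psi_{j}|_{L_{q}}\leq C_{M}2^{-jM}$ only holds literally once $\varepsilon 2^{j}\gtrsim1$; for the finitely many smaller $j$ the bound is polynomial in $2^{j}$, but since the constant is allowed to depend on $\varepsilon$ this is harmless---just absorb those terms into $C_{M}(\varepsilon)$.
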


\begin{proof}
Let $v\in L_p(\mathbf{R}^d,V)$, $p\geq1$, and $v_{\varepsilon}=v\ast
\zeta_{\varepsilon}$. It is well known that 
\begin{equation*}
|v_{\varepsilon}|_{V,p}\leq |v|_{V,p}
\end{equation*}
and 
\begin{equation*}
|v-v_{\varepsilon}|_{V,p}\to0,\quad \varepsilon\to0.
\end{equation*}
Therefore, 
\begin{eqnarray*}
|u_{\varepsilon }|_{B_{pp}^{\beta }(\mathbf{R}^{d},V)}^{p}
&=&\sum_{j=0}^{\infty }2^{j\beta p}|\varphi _{j}\ast u_{\varepsilon
}|_{V,p}^{p} =\sum_{j=0}^{\infty }2^{j\beta p}|(\varphi _{j}\ast
u)_{\varepsilon }|_{V,p}^{p} \\
&\leq &\sum_{j=0}^{\infty }2^{j\beta p}|\varphi _{j}\ast
u|_{V,p}^{p}=|u|^p_{B_{pp}^{\beta }(\mathbf{R}^{d},V)}
\end{eqnarray*}%
and 
\begin{eqnarray*}
|u_{\varepsilon }-u|_{B_{pp}^{\beta }(\mathbf{R}^{d},V)}^{p}&=&\sum_{j=0}^{%
\infty }2^{j\beta p}|\varphi _{j}\ast (u_{\varepsilon }-u)|_{V,p}^{p} \\
&=&\sum_{j=0}^{\infty }2^{j\beta p}\big\vert(\varphi _{j}\ast
u)_{\varepsilon }-\varphi_{j}\ast u\big\vert_{V,p}^{p}\to 0
\end{eqnarray*}%
as $\varepsilon\to 0$. Similarly,%
\begin{equation*}
|u_{\varepsilon }|_{H_{p}^{\beta }(\mathbf{R}^{d},V)}=|J^{\beta
}u_{\varepsilon }|_{V,p}=|\left(J^{\beta}u\right) _{\varepsilon }|_{V,p}\leq
|J^{\beta}u|_{V,p}=|u|_{H_p^{\beta}(\mathbf{R}^d,V)}
\end{equation*}%
and%
\begin{equation*}
|u_{\varepsilon }-u|_{H_{p}^{\beta }(\mathbf{R}^{d},V)}
=|J^{\beta}(u_{\varepsilon }-u)|_{V,p}=\left\vert \left( J^{\beta}u\right)
_{\varepsilon }-J^{\beta}u\right\vert _{V,p}\to0
\end{equation*}%
as $\varepsilon \rightarrow 0$.

Let $u\in B_{pp}^{\beta }(\mathbf{R}^{d},V)$. Then 
\begin{equation*}
u=\sum_{j=0}^{\infty }u\ast \varphi _{j}
\end{equation*}%
in $\mathcal{S}^{\prime }(\mathbf{R}^{d})$. Therefore, 
\begin{equation*}
u_{\varepsilon }=\sum_{j=0}^{\infty }u\ast \varphi _{j}\ast \zeta
_{\varepsilon }
\end{equation*}%
and, for every $m\in \mathbf{N}_{0}$ and $l>0$, 
\begin{equation*}
J^{m}u_{\varepsilon }=\sum_{j=0}^{\infty }J^{-l}u\ast \varphi _{j}\ast
J^{m+l}\zeta _{\varepsilon }.
\end{equation*}%
Applying Minkowski's and H\"{o}lder's inequalities, we get 
\begin{eqnarray*}
|J^{m}u_{\varepsilon }(x)|_{V} &\leq &\sum_{j=0}^{\infty }\big\vert %
J^{-l}u\ast \varphi _{j}\ast J^{m+l}\zeta _{\varepsilon }\big\vert_{V}\leq
\sum_{j=0}^{\infty }\big\vert J^{-l}u\ast \varphi _{j}\big\vert_{V}\ast %
\big\vert J^{m+l}\zeta _{\varepsilon }\big\vert(x) \\
&\leq &C\sum_{j=0}^{\infty }\big\vert J^{-l}u\ast \varphi _{j}\big\vert_{V,p}
\end{eqnarray*}%
and 
\begin{equation*}
|J^{m}u_{\varepsilon }|_{V,p}\leq \sum_{j=0}^{\infty }\big\vert J^{-l}u\ast
\varphi _{j}\ast J^{m+l}\zeta _{\varepsilon }\big\vert_{V,p}\leq
C\sum_{j=0}^{\infty }\big\vert J^{-l}u\ast \varphi _{j}\big\vert_{V,p}.
\end{equation*}%
By Lemma 6.2.1 in \cite{BeLo76}, 
\begin{equation*}
|J^{-l}u\ast \varphi _{j}|_{V,p}\leq C2^{-lj}|u\ast \varphi _{j}|_{V,p}.
\end{equation*}

On the other hand, 
\begin{equation*}
|u\ast\varphi _{j}|_{V,p}\leq 2^{-\beta j}|u|_{B_{pp}^{\beta }(\mathbf{R}%
^{d},V)},\quad j\geq 0.
\end{equation*}
Choosing $l$ so that $l+\beta >0$, we have%
\begin{eqnarray*}
|J^{m}u_{\varepsilon }(x)|_{V}+|J^{m}u_{\varepsilon }|_{V,p}&\leq&
C\sum_{j=0}^{\infty }|J^{-l}u\ast \varphi _{j}|_{V,p} \leq
C\sum_{j=0}^{\infty }2^{-lj}|u\ast\varphi_j|_{V,p} \\
&\leq& C\sum_{j=0}^{\infty }2^{-(\beta+l)j}|u|_{B_{pp}^{\beta}(\mathbf{R}%
^d,V)}\leq C|u|_{B_{pp}^{\beta}(\mathbf{R}^d,V)}.
\end{eqnarray*}

For $u\in H_{p}^{\beta }(\mathbf{R}^{d},V),$ we have%
\begin{equation*}
J^{m}u_{\varepsilon }=J^{\beta }u\ast J^{m-\beta }\zeta_{\varepsilon } .
\end{equation*}%
Applying Minkowski's and H\"older's inequalities, we get 
\begin{equation*}
|J^{m}u_{\varepsilon }(x)|_{V}+|J^{m}u_{\varepsilon }|_{V,p}\leq C|J^{\beta
}u|_{V,p}= C|u|_{H_p^{\beta}(\mathbf{R}^d,V)}.
\end{equation*}
The lemma is proved.
\end{proof}

Similarly we approximate random functions.

\begin{lemma}
\label{cnew2}Let $\beta \in \mathbf{R},\ p\geq 1$ and $g\in \mathbb{A}$,
where $\mathbb{A}=\mathbb{B}_{pp}^{\beta }(E,V)$ or $\mathbb{H}_{p}^{\beta
}(E,V)$. Let 
\begin{equation}
\left( g\right) _{n}=g_{n}(t,x)=n\int_{t_{n}}^{t}g(s,\cdot )\ast \zeta _{%
\frac{1}{n}}(x)ds,\quad (t,x)\in E,  \label{eq14}
\end{equation}%
where $t_{n}=(t-\frac{1}{n})\vee 0$.

Then $|g_{n}-g|_{\mathbb{A}}\rightarrow 0$ as $n\rightarrow \infty $.
Moreover, for every $n$ and multiindex $\gamma \in \mathbf{N}_{0}^{d}$ there
is a constant $C$ not depending on $u$ such that 
\begin{eqnarray*}
&&\mathbf{E}\bigg[\sup_{(t,x)\in E}\big\vert\partial _{x}^{\gamma }g_{n}(t,x)%
\big\vert_{V}^{p}+\int_{0}^{T}\big\vert\partial _{x}^{\gamma }g_{n}(t,\cdot
)|_{V,p}^{p}dt\bigg] \\
&\leq &nC|g|_{\mathbb{A}}<\infty .
\end{eqnarray*}
\end{lemma}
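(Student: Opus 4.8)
The plan is to factor the smoothing $g\mapsto g_{n}$ into a spatial mollification and a backward-in-time average and to handle the two separately. Write $A=B_{pp}^{\beta}(\mathbf{R}^{d},V)$ or $H_{p}^{\beta}(\mathbf{R}^{d},V)$, so that $|f|_{\mathbb{A}}^{p}=\mathbf{E}\int_{0}^{T}|f(t)|_{A}^{p}\,dt$ and $\mathbb{A}=L_{p}(\Omega;L_{p}([0,T];A))$; since $g\in\mathbb{A}$, for a.e. $(\omega,s)$ we have $g(s,\cdot)\in A$, so Lemma \ref{cnew} applies pointwise. Set $\mathcal{M}_{n}g(t,\cdot)=g(t,\cdot)\ast\zeta_{\frac1n}$ and $\mathcal{T}_{n}g(t)=n\int_{t_{n}}^{t}g(s)\,ds$, so that $g_{n}=\mathcal{T}_{n}\mathcal{M}_{n}g=\mathcal{M}_{n}\mathcal{T}_{n}g$ (the operators act on different variables and commute). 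Because $[t_{n},t]\subseteq[0,t]$, the value $g_{n}(t)$ is determined by $g$ on $[0,t]$, so $\mathcal{R}(\mathbb{F})$-measurability passes to $g_{n}$; this is precisely why the average is taken \emph{backward} in time. The key structural fact is that $\mathcal{T}_{n}$ is a contraction on $L_{p}([0,T];A)$ for any Banach space $A$: applying Jensen on $[t_{n},t]$ (of length $\le\frac1n$) gives $|\mathcal{T}_{n}h(t)|_{A}^{p}\le n\int_{t_{n}}^{t}|h(s)|_{A}^{p}\,ds$, and Fubini (each $s$ contributes to a $t$-set of length $\le\frac1n$) yields $\int_{0}^{T}|\mathcal{T}_{n}h(t)|_{A}^{p}\,dt\le\int_{0}^{T}|h(s)|_{A}^{p}\,ds$. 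Moreover $\mathcal{T}_{n}h\to h$ in $L_{p}([0,T];A)$, first for $h\in C([0,T];A)$ where $\mathcal{T}_{n}h(t)\to h(t)$ for every $t>0$ with $|\mathcal{T}_{n}h|_{A}$ uniformly bounded (dominated convergence), and then for all $h$ by density and the uniform contraction bound.

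For the convergence $|g_{n}-g|_{\mathbb{A}}\to0$ I would split
\begin{equation*}
g_{n}-g=\mathcal{T}_{n}(\mathcal{M}_{n}g-g)+(\mathcal{T}_{n}g-g).
\end{equation*}
By Lemma \ref{cnew}, $|\mathcal{M}_{n}g(t)-g(t)|_{A}\to0$ pointwise in $(\omega,t)$ and $|\mathcal{M}_{n}g(t)|_{A}\le|g(t)|_{A}$, so $|\mathcal{M}_{n}g(t)-g(t)|_{A}^{p}\le(2|g(t)|_{A})^{p}$ is dominated by an $\mathbb{A}$-integrable function; dominated convergence gives $|\mathcal{M}_{n}g-g|_{\mathbb{A}}\to0$, and the contraction property of $\mathcal{T}_{n}$ then controls the first term. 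For the second term, for each $\omega$ we have $|\mathcal{T}_{n}g(\cdot)-g(\cdot)|_{L_{p}([0,T];A)}\to0$ by the previous paragraph, with the bound $\le2|g(\cdot)|_{L_{p}([0,T];A)}$, so a further dominated convergence over $\Omega$ gives $|\mathcal{T}_{n}g-g|_{\mathbb{A}}\to0$. Combining, $|g_{n}-g|_{\mathbb{A}}\to0$.

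For the quantitative bound I would differentiate under the integral, pushing $\partial_{x}^{\gamma}$ through the time integration onto the mollifier, $\partial_{x}^{\gamma}g_{n}(t,x)=n\int_{t_{n}}^{t}\bigl(g(s,\cdot)\ast\partial^{\gamma}\zeta_{\frac1n}\bigr)(x)\,ds$, and then invoke Lemma \ref{cnew} with $\varepsilon=\frac1n$ to bound both $\sup_{x}|\partial_{x}^{\gamma}(g(s)\ast\zeta_{\frac1n})(x)|_{V}$ and $|\partial_{x}^{\gamma}(g(s)\ast\zeta_{\frac1n})|_{V,p}$ by $C_{n}|g(s)|_{A}$ (the explicit $n$-dependence coming from $\partial^{\gamma}\zeta_{\frac1n}=n^{|\gamma|}(\partial^{\gamma}\zeta)_{\frac1n}$). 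For the $L_{V,p}$ term, Minkowski's integral inequality gives $|\partial_{x}^{\gamma}g_{n}(t,\cdot)|_{V,p}\le C_{n}\,\mathcal{T}_{n}(|g(\cdot)|_{A})(t)$, and the scalar contraction of $\mathcal{T}_{n}$ followed by $\mathbf{E}$ bounds $\mathbf{E}\int_{0}^{T}|\partial_{x}^{\gamma}g_{n}(t,\cdot)|_{V,p}^{p}\,dt$ by $C_{n}^{p}|g|_{\mathbb{A}}^{p}$. For the supremum term, $[t_{n},t]\subseteq[0,T]$ and Hölder in $s$ give $\sup_{t,x}|\partial_{x}^{\gamma}g_{n}(t,x)|_{V}\le nC_{n}\int_{0}^{T}|g(s)|_{A}\,ds$, whence $\mathbf{E}$ of the $p$-th power is $\le(nC_{n})^{p}T^{p-1}|g|_{\mathbb{A}}^{p}<\infty$. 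Adding the two yields a finite bound of the asserted form, with $C=C(n,\gamma,p,T,d)$. The main difficulty is not any single estimate but the bookkeeping of the two smoothings: keeping the average backward for adaptedness while extracting the contraction constant correctly from the truncated interval $[(t-\tfrac1n)\vee0,\,t]$, and importing the averaging convergence $\mathcal{T}_{n}h\to h$ at the level of the abstract Banach space $A$.
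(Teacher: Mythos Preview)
Your proposal is correct and follows essentially the same approach as the paper. The paper uses precisely your decomposition: writing $\tilde g_n=\mathcal{T}_n g$ and $\bar g_n=\mathcal{M}_n g$, it splits $g_n-g=(g_n-\tilde g_n)+(\tilde g_n-g)=\mathcal{T}_n(\mathcal{M}_n g-g)+(\mathcal{T}_n g-g)$, bounds the first term by the $L_p$-contraction of $\mathcal{T}_n$ together with Lemma~\ref{cnew}, handles the second via the standard $L_p$ approximation property of backward time averages, and obtains the derivative bound by Jensen on $[t_n,t]$ followed by Lemma~\ref{cnew}; your more operator-theoretic packaging is just a cleaner phrasing of the same argument.
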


\begin{proof}
Let 
\begin{equation*}
\tilde{g}_{n}(t,x)=n\int_{t_{n}}^{t}g(s,x)ds,\quad \bar{g}%
_{n}(t,x)=g(t,\cdot )\ast \zeta _{\frac{1}{n}}(x).
\end{equation*}%
Applying Minkowski's and H\"{o}lder's inequalities, we have 
\begin{eqnarray*}
|g_{n}-\tilde{g}_{n}|_{\mathbb{A}}^{p} &=&\int_{0}^{T}\bigg\vert %
n\int_{t_{n}}^{t}\big[\bar{g}_{n}(s,\cdot )-g(s,\cdot )\big]ds\bigg\vert_{%
\mathbb{A}_{d}}^{p}dt \\
&\leq &\int_{0}^{T}n\int_{t_{n}}^{t}\big\vert\bar{g}_{n}(s,\cdot )-g(s,\cdot
)\big\vert_{\mathbb{A}_{d}}^{p}dsdt \\
&\leq &\int_{0}^{T}\big\vert\bar{g}_{n}(s,\cdot )-g(s,\cdot )\big\vert_{%
\mathbb{A}_{d}}^{p}ds,
\end{eqnarray*}%
where $\mathbb{A}_{d}=\mathbb{B}_{pp}^{\beta }(\mathbf{R}^{d},V)$ or $%
\mathbb{H}_{p}^{\beta }(\mathbf{R}^{d},V)$. Hence, by Lemma~\ref{cnew}, $%
|g_{n}-\tilde{g}_{n}|_{\mathbb{A}}\rightarrow 0$ as $n\rightarrow \infty $.

Let $v(t),\ t\in[0,T]$, be a function in a Banach space with norm $||\cdot
|| $ such that $\int_0^T||v(t)||^pdt<\infty$. It is well known that 
\begin{equation*}
n\int_0^T\int_{t_n}^t||v(s)-v(t)||^pdsdt\to0
\end{equation*}
as $n\to\infty$. Therefore, applying Minkowski's and H\"older's
inequalities, we get 
\begin{eqnarray*}
|\tilde g_n-g|^p_{\mathbb{A}}&=&\int_{0}^{T}\bigg\vert n\int _{t_{n}}^t\big[%
g(s,\cdot)-g(t,\cdot)]ds\bigg\vert^{p}_{\mathbb{A}_d}dt \\
&\leq&\int_{0}^{T}\bigg[n\int_{t_n}^t\big\vert g(s,\cdot)-g(t,\cdot)%
\big\vert _{\mathbb{A}_d}ds\bigg]^pdt \\
&\leq& n\int_{0}^{T}\int_{t_n}^t\big\vert g(s,\cdot)-g(t,\cdot)\big\vert _{%
\mathbb{A}_d}^pdsdt\rightarrow 0
\end{eqnarray*}%
as $n\rightarrow \infty .$ Hence, 
\begin{equation*}
|g_{n}-g|_{\mathbb{A}}\leq |g_n-\tilde g_n|_{\mathbb{A}}+|\tilde g_n-g|_{%
\mathbb{A}}\to0,\quad n\to\infty.
\end{equation*}

Applying Minkowski's and H\"older's inequalities, we have 
\begin{eqnarray*}
|\partial_x^{\gamma} g_n(t,x)|^p_{V}&=&\bigg\vert n\int
_{t_{n}}^t\partial_x^{\gamma}\bar g_n(s,x)ds\bigg\vert^{p}_{V} \leq\bigg( %
n\int _{t_{n}}^t\big\vert\partial_x^{\gamma}\bar g_n(s,x)\big\vert_Vds\bigg)%
^{p} \\
&\leq& n\int _{t_{n}}^t\big\vert\partial_x^{\gamma}\bar g_n(s,x)\big\vert%
^p_Vds .
\end{eqnarray*}%
Therefore, by Lemma \ref{cnew}, 
\begin{eqnarray*}
&&\mathbf{E}\bigg(\sup_{(t,x)\in E} \big\vert\partial_x^{\gamma} g_n(t,x)%
\big\vert^{p}_{V}+\int_0^T \big\vert\partial_x^{\gamma} g_n(t,\cdot)ds%
\big\vert^{p}_{V,p}dt\bigg) \\
&&\qquad\leq n\mathbf{E}\bigg(\sup_{(t,x)\in E}\int_{t_n}^t \big\vert%
\partial_x^{\gamma}\bar g_n(s,x)ds\big\vert^{p}_{V}ds+\int_0^T \int_{t_n}^t%
\big\vert\partial_x^{\gamma}\bar g_n(s,\cdot)ds\big\vert^{p}_{V,p}dsdt\bigg)
\\
&&\qquad\leq nC|g|^p_{\mathbb{A}}<\infty.
\end{eqnarray*}%
The lemma is proved.
\end{proof}

We denote by $\mathfrak{D}_{p}(E,V)$, $p\geq 1$, the space of all $\mathcal{%
R(}\mathbb{F})\otimes \mathcal{B}(\mathbf{R}^{d})$-measurable $V$-valued
random functions $\Phi $ on $E$ such that $\Phi \in \cap _{\kappa >0}\mathbb{%
H}_{p}^{\kappa }(E,V)$ and for every multiindex 
\hbox{$\gamma \in
\mathbf{N}_{0}^{d}$}%
\begin{equation*}
\mathbf{E}\sup_{(t,x)\in E}|D_{x}^{\gamma }\Phi (t,x)|_{V}^{p}<\infty .
\end{equation*}%
Similarly we define the space $\mathfrak{D}_{p}(\mathbf{R}^{d},V)$ replacing 
$\mathcal{R}(\mathbb{F})$ and $E$ by $\mathcal{F}$ and $\mathbf{R}^{d}$ in
the definition of $\mathfrak{D}_{p}(E,V)$. For brevity of notation, if $%
V=L_{r}(U,\mathcal{U},\Pi ),\ r\geq 1,$ we write $\mathfrak{D}_{r,p}(E)=%
\mathfrak{D}_{p}(E,V).$ If $V=\mathbf{R}$, we drop $V$ in $\mathfrak{D}%
_{p}(E,V)$.

We denote by $\mathfrak{\bar{D}}_{r,p}(E),\,r\geq 1,\,p\geq 1$, the space of
all $\mathcal{R(}\mathbb{F})\otimes \mathcal{B}(\mathbf{R}^{d})\otimes 
\mathcal{B}(\mathbf{R}_{0}^{d})$ -measurable real-valued random functions $g$
such that for every multiindex $\gamma \in \mathbf{N}_{0}^{d}$ 
\begin{equation*}
\begin{array}{c}
\mathbf{E}\displaystyle\bigg\{\sup_{(t,x)\in E}\bigg[\int_{\mathbf{R}%
_{0}^{d}}|D_{x}^{\gamma }g(t,x,y)|^{r}l^{(\alpha )}(t,y)\frac{\displaystyle %
dy}{\displaystyle|y|^{d+\alpha }}\bigg]^{p/r} \\ 
\noalign{\vskip9pt} {\hspace*{.6cm}}+\displaystyle\int_{0}^{T}\int_{\mathbf{R%
}^{d}}\bigg[\int_{\mathbf{R}_{0}^{d}}|D_{x}^{\gamma }g(t,x,y)|^{r}l^{(\alpha
)}(t,y)\frac{\displaystyle dy}{\displaystyle|y|^{d+\alpha }}\bigg]^{p/r}dxdt%
\bigg\}<\infty .%
\end{array}%
\end{equation*}

Lemmas \ref{cnew} and \ref{cnew2} imply the following statement.

\begin{lemma}
\label{lemd} Let $p\geq1,\ r\geq1$ and $\kappa,\kappa^{\prime }\in \mathbf{R}
$. Then:

\emph{(a)} the set $\mathfrak{D}_{p}(\mathbf{R}^d)$ is a dense subset in $%
\mathbb{B}_{pp}^{\kappa }(\mathbf{R}^d)$ and the set $\mathfrak{D}_{p}(E)$
is a dense subset of $\mathbb{H}_{p}^{\kappa}(E)$;

\emph{(b)} the set $\mathfrak{D}_{r,p}(E)$ is a dense subset in $\mathbb{H}%
_{r,p}^{\kappa }(E)$ and $\mathbb{B}_{r,pp}^{\kappa }(E)$, and the set $%
\mathfrak{D}_{2,p}(E)\cap \mathfrak{D}_{p,p}(E)$ is a dense subset of $%
\mathbb{H}_{2,p}^{\kappa }(E)\cap \mathbb{B}_{p,pp}^{\kappa ^{\prime }}(E)$;

\emph{(c)} the set $\mathfrak{\bar{D}}_{r,p}(E)$ is a dense subset in $%
\mathbb{\bar{H}}_{r,p}^{\kappa }(E)$ and $\mathbb{\bar{B}}_{r,pp}^{\kappa
}(E)$, and the set $\mathfrak{\bar{D}}_{2,p}(E)\cap \mathfrak{\bar{D}}%
_{p,p}(E)$ is a dense subset of $\mathbb{\bar{H}}_{2,p}^{\kappa }(E)\cap 
\mathbb{\bar{B}}_{p,pp}^{\kappa ^{\prime }}(E)$.
\end{lemma}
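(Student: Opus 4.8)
The plan is to realise every element of the four target spaces as a limit of its own mollifications and to observe that these mollifications already lie in the corresponding $\mathfrak{D}$-class; the whole statement then becomes bookkeeping on top of Lemmas \ref{cnew} and \ref{cnew2}, once one settles which lemma applies where and how the outer expectation is handled.

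First I would treat the spaces over $\mathbf{R}^{d}$ in (a). Given an $\mathcal{F}$-measurable $u$ with values in $B_{pp}^{\kappa}(\mathbf{R}^{d})$, set $u_{\varepsilon}=u\ast\zeta_{\varepsilon}$. Lemma \ref{cnew}, applied pathwise in $\omega$, gives both $|u_{\varepsilon}-u|_{B_{pp}^{\kappa}(\mathbf{R}^{d})}\to0$ and the bounds $\sup_{x}|\partial^{\gamma}u_{\varepsilon}(x)|+|\partial^{\gamma}u_{\varepsilon}|_{p}\le C|u|_{B_{pp}^{\kappa}(\mathbf{R}^{d})}$ for every $\gamma$. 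The derivative bounds show $u_{\varepsilon}\in\bigcap_{\kappa'>0}\mathbb{H}_{p}^{\kappa'}(\mathbf{R}^{d})$ and $\mathbf{E}\sup_{x}|D_{x}^{\gamma}u_{\varepsilon}|^{p}<\infty$, so $u_{\varepsilon}\in\mathfrak{D}_{p}(\mathbf{R}^{d})$; convolution preserves $\mathcal{F}\otimes\mathcal{B}(\mathbf{R}^{d})$-measurability. To pass from pathwise convergence to convergence in $|\cdot|_{\mathbb{B}_{pp}^{\kappa}(\mathbf{R}^{d})}=(\mathbf{E}|\cdot|_{B_{pp}^{\kappa}}^{p})^{1/p}$ I invoke dominated convergence with dominating function $(2|u|_{B_{pp}^{\kappa}(\mathbf{R}^{d})})^{p}$, furnished by the same norm bound. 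The $E$-valued statements of (a) and the first assertions of (b) are even more direct: for $g$ in $\mathbb{H}_{p}^{\kappa}(E)$, $\mathbb{H}_{r,p}^{\kappa}(E)$ or $\mathbb{B}_{r,pp}^{\kappa}(E)$ I take $g_{n}$ as in (\ref{eq14}) and quote Lemma \ref{cnew2}, once with $V=\mathbf{R}$ and once with $V=L_{r}$; its convergence statement is already in the blackboard (expectation) norm, and its ``Moreover'' bound is exactly the finiteness needed for $g_{n}\in\mathfrak{D}_{r,p}(E)$. Since the time-averaging in (\ref{eq14}) runs over the past interval $[t_{n},t]$, progressive measurability is preserved.

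For the intersection spaces the only point is that one approximating sequence serves two norms at once. Given $g\in\mathbb{H}_{2,p}^{\kappa}(E)\cap\mathbb{B}_{p,pp}^{\kappa'}(E)$ I use the same $g_{n}$ from (\ref{eq14}) and apply Lemma \ref{cnew2} twice---with $(\mathbb{A},V)=(\mathbb{H}_{2,p}^{\kappa}(E),L_{2})$ and with $(\mathbb{A},V)=(\mathbb{B}_{p,pp}^{\kappa'}(E),L_{p})$---obtaining simultaneously $|g_{n}-g|_{\mathbb{H}_{2,p}^{\kappa}}\to0$, $|g_{n}-g|_{\mathbb{B}_{p,pp}^{\kappa'}}\to0$ and $g_{n}\in\mathfrak{D}_{2,p}(E)\cap\mathfrak{D}_{p,p}(E)$; membership in $\bigcap_{\kappa''>0}\mathbb{H}_{p}^{\kappa''}(E,V)$ follows from the derivative bound at each integer order.

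The part needing a genuine idea is (c), because the fibre norm $||\cdot||_{r}$ carries the weight $l^{(\alpha)}(t,y)$, which is only measurable (and random) in $t$ and may degenerate. Consequently Lemma \ref{cnew2} cannot be applied directly: the naive average (\ref{eq14}) evaluates $g$ at times $s$ near $t$ while the norm at time $t$ uses $l^{(\alpha)}(t,\cdot)$, and the ratio $l^{(\alpha)}(t,\cdot)/l^{(\alpha)}(s,\cdot)$ is uncontrolled. The remedy I would use is the substitution $h=g\,(l^{(\alpha)})^{1/r}$. As $l^{(\alpha)}$ is independent of $x$, multiplication by $(l^{(\alpha)})^{1/r}$ commutes with $\varphi_{j}\ast$ and with $J^{\kappa}$ (both acting in $x$), whence $||D_{x}^{\gamma}g(t,x,\cdot)||_{r}=|D_{x}^{\gamma}h(t,x,\cdot)|_{L_{r}(\mathbf{R}_{0}^{d},|y|^{-d-\alpha}dy)}$ and likewise for the $\varphi_{j}$- and $J^{\kappa}$-transformed norms. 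Thus $g\mapsto h$ is a norm-preserving correspondence turning $\mathbb{\bar{H}}_{r,p}^{\kappa}(E),\ \mathbb{\bar{B}}_{r,pp}^{\kappa}(E)$ and $\mathfrak{\bar{D}}_{r,p}(E)$ into $\mathbb{H}_{p}^{\kappa}(E,L_{r}),\ \mathbb{B}_{pp}^{\kappa}(E,L_{r})$ and $\mathfrak{D}_{r,p}(E)$ for the fixed measure $|y|^{-d-\alpha}dy$, reducing (c) to the already settled (b). Having produced $h_{n}\in\mathfrak{D}_{r,p}(E)$ with $h_{n}\to h$, I set $g_{n}=h_{n}(l^{(\alpha)})^{-1/r}\mathbf{1}_{\{l^{(\alpha)}>0\}}$, which is $\mathcal{R}(\mathbb{F})\otimes\mathcal{B}(\mathbf{R}^{d})\otimes\mathcal{B}(\mathbf{R}_{0}^{d})$-measurable and smooth in $x$; because the fibre norm sees $g$ only on $\{l^{(\alpha)}>0\}$, the weights cancel, giving $g_{n}\to g$ in $\mathbb{\bar{H}}_{r,p}^{\kappa}(E)$ (respectively $\mathbb{\bar{B}}_{r,pp}^{\kappa}(E)$) with $g_{n}\in\mathfrak{\bar{D}}_{r,p}(E)$, and the intersection case uses the same $h_{n}$ for both transformed norms. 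The main obstacle is precisely this weight handling: recognising that the degenerate, time-dependent weight forbids a direct mollification of $g$, and that the correct object to mollify is the weight-adjusted function $h$.
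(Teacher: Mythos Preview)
Your treatment of (a) and (b) is correct and coincides with the paper's: both rely on Lemmas \ref{cnew} and \ref{cnew2} with the appropriate choice of $V$, and your remark about dominated convergence for the outer expectation in the $\mathbb{B}_{pp}^{\kappa}(\mathbf{R}^{d})$ case fills in a step the paper leaves implicit.

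For part (c) with a \emph{single} $r$ your weight-absorption device $h=g\,(l^{(\alpha)})^{1/r}$ is a legitimate and more elementary alternative to what the paper does. The paper instead invokes Jacod's representation (Lemma 14.50 in \cite{Jac79}) to produce an $\mathcal{R}(\mathbb{F})\otimes\mathcal{B}(\mathbf{R}_{0})$-measurable map $c^{(\alpha)}(t,z)$ with $l^{(\alpha)}(t,y)\,|y|^{-d-\alpha}dy=\int 1_{dy}(c^{(\alpha)}(t,z))\,z^{-2}dz$, and works with $\tilde g(t,x,z)=g(t,x,c^{(\alpha)}(t,z))$, which lives in $\mathbb{H}_{p}^{\kappa}(E,V_{r})$ for the \emph{fixed} space $V_{r}=L_{r}(\mathbf{R}_{0},z^{-2}dz)$. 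Both routes kill the time-dependence of the fibre norm; yours by multiplication, the paper's by composition.

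The gap is in the \emph{intersection} assertion of (c). Your transformation $g\mapsto h=g\,(l^{(\alpha)})^{1/r}$ depends on $r$: for $\bar{\mathbb{H}}_{2,p}^{\kappa}(E)\cap\bar{\mathbb{B}}_{p,pp}^{\kappa'}(E)$ you would need $h^{(2)}=g\,(l^{(\alpha)})^{1/2}$ and $h^{(p)}=g\,(l^{(\alpha)})^{1/p}$, which are different functions, and their mollifications produce different $g_{n}^{(2)}\neq g_{n}^{(p)}$. So ``the same $h_{n}$'' does not exist, and you have not produced a single sequence approximating $g$ in both norms simultaneously. Nor can you repair this by checking that $g_{n}^{(2)}\in\bar{\mathfrak{D}}_{p,p}(E)$: the relevant integrand becomes $|D_{x}^{\gamma}h_{n}^{(2)}|^{p}(l^{(\alpha)})^{1-p/2}$, and for $p>2$ the negative power of $l^{(\alpha)}$ is uncontrolled where $l^{(\alpha)}$ is small. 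This is exactly where the paper's device wins: the composition $g\mapsto\tilde g$ is independent of $r$, so one mollification $(\tilde g)_{n}$ serves for every $V_{r}$ at once, and the intersection statement follows immediately from the already proved part (b).
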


\begin{proof}
(a) Let $u\in \mathbb{B}_{pp}^{\kappa }(\mathbf{R}^{d})$ and $u_{n}{(x)}%
=u\ast \zeta _{1/n}(x)$, $n=1,2,\ldots .$ Using Lemma \ref{cnew}, it is easy
to derive that $u_{n}\in \mathfrak{D}_{p}(\mathbf{R}^{d})$ and $|u-u_{n}|_{%
\mathbb{B}_{pp}^{\kappa }(\mathbf{R}^{d})}\rightarrow 0$ as $n\rightarrow
\infty $. If $g\in \mathbb{H}_{p}^{\kappa }(E)$ and $g_{n}$ is defined by (%
\ref{eq14}), then by Lemma~\ref{cnew2}, $|g-g_{n}|_{\mathbb{H}_{p}^{\kappa
}(E)}\rightarrow 0$ as $n\rightarrow \infty $.

(b)\enspace According to Lemma \ref{cnew2}, we have the following statements:

(i)\enspace if $g\in \mathbb{H}^{\kappa}_{r,p}(E)$ or $\mathbb{B}%
^{\kappa}_{r,pp}(E)$, then the functions $g_n$ defined by (\ref{eq14})
belong to $\mathfrak{D}_{r,p}(E)$ and $|g-g_n|_{\mathbb{H}%
^{\kappa}_{r,p}(E)}\to0$ or $|g-g_n|_{\mathbb{B}^{\kappa}_{r,pp}(E)}$ as $%
n\to\infty$;

(ii)\enspace if $g\in \mathbb{H}_{2,p}^{\kappa }(E)\cap \mathbb{B}%
_{p,pp}^{\kappa ^{\prime }}(E)$, then $g_{n}\in \mathfrak{D}_{2,p}(E)\cap 
\mathfrak{D}_{p,p}(E),\ n=1,\ldots ,$ and $|g-g_{n}|_{\mathbb{H}%
_{2,p}^{\kappa }(E)}+|g-g_{n}|_{\mathbb{B}_{p,pp}^{\kappa ^{\prime
}}(E)}\rightarrow 0$ as $n\rightarrow \infty $.

\noindent (c)\enspace According to Lemma 14.50 in \cite{Jac79}, there is a $%
\mathbf{R}^{d}$-valued $\mathcal{R}(\mathbb{F})\otimes \mathcal{B}(\mathbf{R}%
_{0})$-measurable function $c^{(\alpha )}(t,z)$ such that%
\begin{equation}
l^{(\alpha )}(t,y)\frac{dy}{|y|^{d+\alpha }}=\int_{\mathbf{R}%
_{0}}1_{dy}(c^{(\alpha )}(t,z))\frac{dz}{z^{2}}  \label{eq12}
\end{equation}%
and for any non-negative measurable function $F(t,x,y)$%
\begin{equation*}
\int_{\mathbf{R}_{0}^{d}}F(t,x,y)l^{(\alpha )}(t,y)\frac{dy}{|y|^{d+\alpha }}%
=\int_{\mathbf{R}_{0}}\tilde{F}(t,x,z)\frac{dz}{z^{2}},
\end{equation*}%
where $\tilde{F}(t,x,z)=F(t,x,c^{(\alpha )}(t,z))$. Hence, if $F\in \bar{%
\mathbb{H}}_{r,p}^{\kappa }(E)$, then $|F|_{\bar{\mathbb{H}}_{r,p}^{\kappa
}(E)}=|\tilde{F}|_{\mathbb{H}_{p}^{\kappa }(E,V_{r})}$, where $V_{r}=L_{r}{(%
\mathbf{R}_{0},\mathcal{B}(\mathbf{R}_{0}),dz/z^{2})}$. Also, if $F\in \bar{%
\mathbb{B}}_{r,pp}^{\kappa }(E)$, then $|F|_{\bar{\mathbb{B}}_{r,pp}^{\kappa
}(E)}=|\tilde{F}|_{\mathbb{B}_{pp}^{\kappa }(E,V_{r})}$.

Let $g\in \bar{\mathbb{H}}^{\kappa}_{r,p}(E)$ and $g_n$ be the function
defined by (\ref{eq14}). By Lemma \ref{cnew2}, $\tilde g_n\in \mathfrak{D}%
_{p}(E,V_r)$ and $|\tilde g-\tilde g_n|_{\mathbb{H}^{\kappa}_{p}(E,V_r)}\to0$
as $n\to\infty$. Therefore, $g_n\in \bar{\mathfrak{D}}_{r,p}(E)$ and $%
|g-g_n|_{\bar{\mathbb{H}}^{\kappa}_{r,p}(E)}=|\tilde g-\tilde g_n|_{{\mathbb{%
H}}^{\kappa}_{p}(E,V_r)} \to0$ as $n\to\infty$. So, $\bar{\mathfrak{D}}%
_{r,p}(E)$ is dense in $\bar{\mathbb{H}}^{\kappa}_{r,p}(E)$.

Similarly we prove the remaining assertions of part (c).
\end{proof}

\subsection{Stochastic integrals}

We discuss here the definition of the stochastic integrals with respect to a
martingale measure $\eta $.

\begin{lemma}
\label{ler1}Let $\beta \in \mathbf{R,}p\geq 2,\Phi \in \mathbb{H}%
_{2,p}^{\beta }(E)\cap \mathbb{H}_{p,p}^{\beta }(E)$. There is a unique
cadlag $H_{p}^{\beta }(\mathbf{R}^{d})$-valued process 
\begin{equation*}
M(t)=\int_{0}^{t}\int \Phi (s,x,\upsilon )\eta (ds,d\upsilon ),0\leq t\leq
T,x\in \mathbf{R}^{d},
\end{equation*}%
such that for every $\varphi \in \mathcal{S}(\mathbf{R}^{d})$%
\begin{equation}
\left\langle M(t),\varphi \right\rangle =\int_{0}^{t}\int \left\langle \Phi
(s,\cdot ,\upsilon ),\varphi \right\rangle \eta (ds,d\upsilon ),0\leq t\leq
T.  \label{00}
\end{equation}%
Moreover, there is a constant independent of $C$ such that%
\begin{equation*}
\mathbf{E}\sup_{t\leq T}|\int_{0}^{t}\int \Phi (s,\cdot ,\upsilon )\eta
(ds,d\upsilon )|_{H_{p}^{\beta }(\mathbf{R}^{d})}\leq
C\sum_{r=2,p}\left\vert \Phi \right\vert _{\mathbb{H}_{r,p}^{\beta }(E)}.
\end{equation*}
\end{lemma}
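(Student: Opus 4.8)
The plan is to reduce to $\beta=0$, construct the integral pointwise in $x$ for smooth integrands from the scalar theory of integration against $\eta$, derive the decisive $L_p(\mathbf{R}^d)$-valued maximal bound from the scalar Burkholder--Davis--Gundy (Kunita) inequality, and then extend to general $\Phi$ by density and completeness. For the reduction, note that $J^{\beta}=(I-\Delta)^{\beta/2}$ acts only in the space variable, hence commutes with integration against $\eta$, and is an isometry $H_p^\beta(\mathbf{R}^d)\to L_p(\mathbf{R}^d)$ with $|J^\beta\Phi|_{\mathbb{H}_{r,p}^0(E)}=|\Phi|_{\mathbb{H}_{r,p}^\beta(E)}$. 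It therefore suffices to treat $v=J^\beta\Phi\in\mathbb{L}_{2,p}(E)\cap\mathbb{L}_{p,p}(E)$ and to produce a cadlag $L_p(\mathbf{R}^d)$-valued process $N(t)=\int_0^t\int v(s,\cdot,\upsilon)\eta(ds,d\upsilon)$; the assertion for $M$ then follows by applying $J^{-\beta}$.

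For $v$ in the dense class $\mathfrak{D}_{2,p}(E)\cap\mathfrak{D}_{p,p}(E)$ of Lemma \ref{lemd}, for each fixed $x$ the real process $N(t,x)=\int_0^t\int v(s,x,\upsilon)\eta(ds,d\upsilon)$ is a well-defined cadlag purely discontinuous martingale, adapted and jointly measurable in $(t,x,\omega)$. For $p\ge2$ the Kunita inequality gives, for each $x$,
\begin{equation*}
\mathbf{E}\sup_{t\le T}|N(t,x)|^p\le C\left\{\mathbf{E}\Big(\int_0^T\int_U|v(s,x,\upsilon)|^2\Pi(d\upsilon)ds\Big)^{p/2}+\mathbf{E}\int_0^T\int_U|v(s,x,\upsilon)|^p\Pi(d\upsilon)ds\right\}.
\end{equation*}
Using $\mathbf{E}\sup_t|N(t)|_{L_p}^p\le\int_{\mathbf{R}^d}\mathbf{E}\sup_t|N(t,x)|^p\,dx$ (Tonelli), integrating this bound over $x$, and controlling the quadratic term by H\"older in $s$ (so that $\int_0^T\|v\|_{L_2(\Pi)}^2ds\le T^{1-2/p}(\int_0^T\|v\|_{L_2(\Pi)}^pds)^{2/p}$, then raising to the power $p/2$), I obtain
\begin{equation*}
\mathbf{E}\sup_{t\le T}|N(t)|_{L_p}^p\le C\big(|v|_{\mathbb{H}_{2,p}^0(E)}^p+|v|_{\mathbb{H}_{p,p}^0(E)}^p\big).
\end{equation*}
The weak identity (\ref{00}) for such $v$ is a stochastic Fubini theorem, interchanging $\int_{\mathbf{R}^d}(\cdot)\varphi(x)dx$ with the $\eta$-integral, which is justified for the smooth integrand.

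To reach general $v$, I approximate it by the mollified sequence $v_n$ of (\ref{eq14}); by Lemma \ref{cnew2} each $v_n$ lies in $\mathfrak{D}_{2,p}(E)\cap\mathfrak{D}_{p,p}(E)$ and converges to $v$ simultaneously in $\mathbb{H}_{2,p}^0(E)$ and $\mathbb{H}_{p,p}^0(E)$. Applying the estimate of the previous step to $v_n-v_m$ shows $(N_n)$ is Cauchy in the Banach space of adapted cadlag $L_p(\mathbf{R}^d)$-valued processes under the norm $(\mathbf{E}\sup_{t\le T}|\cdot|_{L_p}^p)^{1/p}$; its limit $N$ is cadlag (a.s. a uniform limit of cadlag paths along a subsequence), obeys the same bound, and inherits (\ref{00}) by passing to the limit on both sides (the scalar integrals on the right converge by the scalar case applied to $\langle v_n-v,\varphi\rangle$). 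The stated first-power bound then follows from the $p$-th power estimate by Jensen's inequality. Uniqueness holds because two cadlag $H_p^\beta$-valued solutions of (\ref{00}) agree, a.s. and for all $t$, when tested against a countable $\|\cdot\|_{L_{p'}}$-dense family $\{\varphi_k\}\subset\mathcal{S}(\mathbf{R}^d)$, and hence coincide as $H_p^\beta$-valued processes.

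The main obstacle is the second step, and specifically not the scalar inequality itself, which is classical, but the rigorous identification of the pointwise-in-$x$ family $\{N(\cdot,x)\}$ with a single cadlag $L_p(\mathbf{R}^d)$-valued stochastic integral: establishing joint measurability in $(t,x,\omega)$, the cadlag $L_p$-valued path, and the stochastic Fubini step that yields (\ref{00}). Once the $L_p$-valued maximal inequality is secured, the $J^\beta$ reduction and the density extension are routine.
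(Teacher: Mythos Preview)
Your proposal is correct and follows essentially the same approach as the paper: construct the integral first for $\Phi$ in the dense smooth class $\mathfrak{D}_{2,p}(E)\cap\mathfrak{D}_{p,p}(E)$, use $J^{\beta}$ to reduce to $L_p$, obtain the maximal bound pointwise in $x$ and integrate, verify the weak identity via stochastic Fubini, then extend by density. The paper resolves the ``main obstacle'' you flag by citing Lemma~15 in \cite{MiPtoap} (which directly furnishes a version of $M(t,x)$ cadlag in $t$ and smooth in $x$ with derivatives commuting with the stochastic integral) and Corollary~2 in \cite{mikprag1} for the pointwise maximal inequality, rather than invoking Kunita's inequality followed by H\"older in $s$ as you do; these are differences in citation, not in strategy.
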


\begin{proof}
For an arbitrary $\phi \in \mathbb{H}_{2,p}^{\beta }(E)\cap \mathbb{H}%
_{p,p}^{\beta }(E)$, by stochastic Fubini theorem (Lemma 2 in \cite{MiPtoap})%
\begin{eqnarray*}
\int_{0}^{t}\int \left\langle \phi (s,\cdot ,\upsilon ),\varphi
\right\rangle \eta (ds,d\upsilon ) &=&\int_{0}^{t}\int \int J^{\beta }\phi
(s,x,\upsilon )J^{-\beta }\varphi (x)dx\eta (ds,d\upsilon ) \\
&=&\int \int_{0}^{t}\int J^{\beta }\phi (s,x,\upsilon )\eta (ds,d\upsilon
)J^{-\beta }\varphi (x)dx,
\end{eqnarray*}%
and (see Corollary 2 in \cite{mikprag1})%
\begin{eqnarray}
&&\mathbf{E[}\sup_{t\leq T}\left\vert \int_{0}^{t}\int \left\langle \phi
(s,\cdot ,\upsilon ),\varphi \right\rangle \eta (ds,d\upsilon )\right\vert
^{p}]  \label{01} \\
&\leq &C\int \mathbf{E}\sup_{t\leq T}\left\vert \int_{0}^{t}\int J^{\beta
}\phi (s,x,\upsilon )\eta (ds,d\upsilon )\right\vert ^{p}dx\leq
C\sum_{r=2,p}|\phi |_{\mathbb{H}_{r,p}^{\beta }(E)}.  \notag
\end{eqnarray}

First we define the stochastic integral for $\Phi \in \mathfrak{D}%
_{2,p}(E)\cap \mathfrak{D}_{p,p}(E)$. By Lemma 15 in \cite{MiPtoap}, for a
given $\Phi \in \mathfrak{D}_{2,p}(E)\cap \mathfrak{D}_{p,p}(E)$ there is a
cadlag in $t$ and smooth in $x$ adapted function $M(t,x)$ such that for each 
$\gamma \in \mathbf{N}_{0}^{d}$ and $x\in \mathbf{R}^{d}$, $\mathbf{P}$-a.s. 
\begin{equation*}
D_{x}^{\gamma }M(t,x)=\int_{0}^{t}\int D_{x}^{\gamma }\Phi (s,x,\upsilon
)\eta (ds,d\upsilon ),0\leq t\leq T.
\end{equation*}%
By stochastic Fubini theorem (Lemma 2 in \cite{MiPtoap}), for each $\beta
\in \mathbf{R}$ and $x\in \mathbf{R}^{d}$, $\mathbf{P}$-a.s.%
\begin{equation*}
J^{\beta }M(t,x)=\int_{0}^{t}\int J^{\beta }\Phi (s,x,\upsilon )\eta
(ds,d\upsilon ),0\leq t\leq T,
\end{equation*}%
and $\mathbf{P}$-a.s%
\begin{eqnarray}
\left\langle M(t),\varphi \right\rangle &=&\int M(t,x)\varphi
(x)dx=\int_{0}^{t}\int (\int \Phi (s,x,\upsilon )\varphi (x)dx)\eta
(ds,d\upsilon )  \label{for0} \\
&=&\int_{0}^{t}\int \left\langle \Phi (s,\cdot ,\upsilon ),\varphi
\right\rangle \eta (ds,d\upsilon ),0\leq t\leq T.  \notag
\end{eqnarray}%
Obviously, $J^{\beta }M(t)$ is $\mathbb{L}_{p}(\mathbf{R}^{d})$-valued
continuos and, by Corollary 2 in \cite{mikprag1}, there is a constant
independent of $\Phi $ such that%
\begin{equation}
\mathbf{E}\sup_{t\leq T}\left\vert M(t)\right\vert _{H_{p}^{\beta }(\mathbf{R%
}^{d})}^{p}\leq C\sum_{r=2,p}\left\vert \Phi \right\vert _{\mathbb{H}%
_{r,p}^{\beta }(E)}^{p}.  \label{for1}
\end{equation}%
If $\Phi \in \mathbb{H}_{2,p}^{\beta }(E)\cap \mathbb{H}_{p,p}^{\beta }(E)$,
then there is a sequence $\Phi _{n}\in \mathfrak{D}_{2,p}(E)\cap \mathfrak{D}%
_{p,p}(E)$ such that%
\begin{equation*}
\sum_{r=2,p}|\Phi -\Phi _{n}|_{\mathbb{H}_{r,p}^{\beta }(E)}\rightarrow 0
\end{equation*}%
as $n\rightarrow \infty $. Let%
\begin{equation*}
M_{n}(t)=\int_{0}^{t}\int \Phi _{n}(t,\cdot ,\upsilon )\eta (ds,d\upsilon
),0\leq t\leq T.
\end{equation*}%
According to (\ref{for1}) and (\ref{for0}),%
\begin{equation*}
\mathbf{E}\sup_{t\leq T}\left\vert M_{n}(t)-M_{m}(t)\right\vert
_{H_{p}^{\beta }(\mathbf{R}^{d})}^{p}\leq C\sum_{r=2,p}\left\vert \Phi
_{n}-\Phi _{m}\right\vert _{\mathbb{H}_{r,p}^{\beta }(E)}^{p}\rightarrow 0
\end{equation*}%
as $n,m\rightarrow \infty .$ Therefore there is an adapted cadlag $%
H_{p}^{\beta }(\mathbf{R}^{d})$-valued process $M(t)$ so that%
\begin{equation*}
\mathbf{E}\sup_{t\leq T}\left\vert M_{n}(t)-M(t)\right\vert _{H_{p}^{\beta }(%
\mathbf{R}^{d})}^{p}\rightarrow 0
\end{equation*}%
as $n\rightarrow \infty $. On the other hand by (\ref{01}), 
\begin{eqnarray*}
&&\mathbf{E}\sup_{t\leq T}|\int_{0}^{t}\int \left\langle \Phi _{n}(s,\cdot
,\upsilon )-\Phi (s,\cdot ,\upsilon ),\varphi \right\rangle \eta
(ds,d\upsilon )|^{p} \\
&\leq &C\sum_{r=2,p}|\Phi _{n}-\Phi |_{\mathbb{H}_{r,p}^{\beta
}(E)}\rightarrow 0
\end{eqnarray*}%
as $n\rightarrow \infty ,$ and (\ref{00}) holds. The statement follows.
\end{proof}

\begin{corollary}
\label{corr1}Let $\alpha \in (0,2),\ \beta \in \mathbf{R},\ p\geq 2,u\in 
\mathbb{H}_{p}^{\beta +\alpha }(E)$. Then%
\begin{equation*}
Q(t)=\int_{0}^{t}\int \left[ u(s,\cdot +y)-u(s,\cdot )\right] q^{(\alpha
)}(ds,dy),0\leq t\leq T,
\end{equation*}%
is cadlag $H_{p}^{\beta }(\mathbf{R}^{d})$-valued and%
\begin{equation*}
\mathbf{E}\sup_{t\leq T}|Q(t)|_{H_{p}^{\beta }(\mathbf{R}^{d})}\leq
C\left\vert u\right\vert _{\mathbb{H}_{p}^{\beta +\alpha /2}(E)}.
\end{equation*}
\end{corollary}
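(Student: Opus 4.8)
The plan is to recognize $Q$ as the stochastic integral against the martingale measure $q^{(\alpha)}$ of the integrand $g(s,x,y)=u(s,x+y)-u(s,x)$, and then to reduce the whole statement to the estimate of Lemma~\ref{ler1}. Exactly as in the proof of Lemma~\ref{lemd}(c), the representation (\ref{eq12}) of Lemma 14.50 in \cite{Jac79} transports $q^{(\alpha)}$ on $\mathbf{R}_{0}^{d}$ (compensator $l^{(\alpha)}(t,y)\,dy/|y|^{d+\alpha}$) to a martingale measure on $\mathbf{R}_{0}$ with the fixed deterministic compensator $dz/z^{2}\,dt$, i.e. precisely the setting of Lemma~\ref{ler1} with $U=\mathbf{R}_{0}$, $\Pi=dz/z^{2}$, $V_{r}=L_{r}(\mathbf{R}_{0},dz/z^{2})$; under this change of variables $|\,\cdot\,|_{\bar{\mathbb{H}}_{r,p}^{\beta}(E)}=|\,\widetilde{\cdot}\,|_{\mathbb{H}_{p}^{\beta}(E,V_{r})}$. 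Hence, provided $g\in\bar{\mathbb{H}}_{2,p}^{\beta}(E)\cap\bar{\mathbb{H}}_{p,p}^{\beta}(E)$, Lemma~\ref{ler1} yields a cadlag $H_{p}^{\beta}(\mathbf{R}^{d})$-valued process $Q$ with
\[
\mathbf{E}\sup_{t\le T}|Q(t)|_{H_{p}^{\beta}(\mathbf{R}^{d})}\le C\sum_{r=2,p}|g|_{\bar{\mathbb{H}}_{r,p}^{\beta}(E)}.
\]
It therefore remains to prove $|g|_{\bar{\mathbb{H}}_{r,p}^{\beta}(E)}\le C\,|u|_{\mathbb{H}_{p}^{\beta+\alpha/2}(E)}$ for $r=2$ and $r=p$; note $u\in\mathbb{H}_{p}^{\beta+\alpha}(E)\subseteq\mathbb{H}_{p}^{\beta+\alpha/2}(E)$, so the right-hand side is finite.

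Both norm bounds begin with the same reduction. Since $J^{\beta}$ is a Fourier multiplier in $x$, it commutes with the translation $u(s,\cdot+y)$, so $J^{\beta}g(s,x,y)=v(s,x+y)-v(s,x)$ with $v=J^{\beta}u$; and since $l^{(\alpha)}$ is bounded, the weight may be discarded at the cost of a constant. For $r=2$ this reduces the claim to the square-function inequality
\[
\int_{\mathbf{R}^{d}}\Big(\int_{\mathbf{R}_{0}^{d}}\frac{|v(x+y)-v(x)|^{2}}{|y|^{d+\alpha}}\,dy\Big)^{p/2}dx\ \le\ C\,|v|_{H_{p}^{\alpha/2}(\mathbf{R}^{d})}^{p},\qquad p\ge2 ,
\]
which, integrated in $(t,\omega)$ and combined with $|v|_{H_{p}^{\alpha/2}}\le C\,|u|_{H_{p}^{\beta+\alpha/2}}$ (the symbols $|\xi|^{\alpha/2}(1+|\xi|^{2})^{\beta/2}$ and $(1+|\xi|^{2})^{(\beta+\alpha/2)/2}$ differ by an $L_{p}$-bounded Mikhlin multiplier), gives the $r=2$ bound.

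For $r=p$ the same reduction identifies the inner integral with the first-difference seminorm of $v$, giving $\int_{\mathbf{R}^{d}}\int_{\mathbf{R}_{0}^{d}}|v(x+y)-v(x)|^{p}|y|^{-d-\alpha}\,dy\,dx=|v|_{\dot B_{pp}^{\alpha/p}(\mathbf{R}^{d})}^{p}$ (here $0<\alpha/p<1$ as $\alpha<2\le p$). Since $p\ge2$, the embeddings $H_{p}^{\alpha/2}=F_{p,2}^{\alpha/2}\hookrightarrow B_{pp}^{\alpha/2}\hookrightarrow B_{pp}^{\alpha/p}$ (Theorem 2.4.2 in \cite{tri}, already invoked in Remark~\ref{re}, together with $\alpha/p\le\alpha/2$) bound $|v|_{\dot B_{pp}^{\alpha/p}}\le C\,|v|_{H_{p}^{\alpha/2}}\le C\,|u|_{H_{p}^{\beta+\alpha/2}}$. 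Inserting the $r=2$ and $r=p$ estimates into the displayed inequality above completes the proof.

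The main obstacle is the $r=2$ square-function inequality. For $p=2$ it is just Plancherel, but for general $p\ge2$ it is exactly the Triebel--Lizorkin ($F_{p,2}^{\alpha/2}$) characterization of the Bessel-potential norm by first differences, and this is the single step that requires genuine harmonic analysis — either a vector-valued Littlewood--Paley/Mikhlin argument comparing this difference square function with the square function in the equivalent norm (\ref{nf0}), or a direct appeal to the known difference characterization. Everything else (the translation invariance, discarding the bounded weight $l^{(\alpha)}$, the Besov embedding for the $r=p$ term, and the reduction of $q^{(\alpha)}$ to Lemma~\ref{ler1}) is routine once this inequality is available.
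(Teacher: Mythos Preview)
Your proposal is correct and follows essentially the same route as the paper: apply Lemma~\ref{ler1} to $\Phi(s,x,y)=u(s,x+y)-u(s,x)$, reduce via $J^{\beta}$ to differences of $v=J^{\beta}u$, and bound the $r=2$ and $r=p$ norms separately. The paper handles the $r=2$ square-function inequality by a direct citation of Theorem~2.2 in \cite{stri} (Strichartz), which is exactly the difference characterization of $H_{p}^{\alpha/2}$ you identify as the Triebel--Lizorkin step; for $r=p$ the paper does the same Besov-seminorm identification and embedding $\mathbb{H}_{p}^{\beta+\alpha/2}\hookrightarrow\mathbb{B}_{pp}^{\beta+\alpha/p}$ you use. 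Your added care about transporting $q^{(\alpha)}$ to a deterministic compensator via (\ref{eq12}) before invoking Lemma~\ref{ler1} is a point the paper leaves implicit.
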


\begin{proof}
We apply Lemma \ref{ler1} with $\Phi (s,x,y)=u(s,x+y)-u(s,x),(s,x)\in E,y\in 
\mathbf{R}_{0}^{d}$. We have%
\begin{equation*}
J^{\beta }\Phi (s,x,y)=J^{\beta }u(s,x+y)-J^{\beta }u(s,x)
\end{equation*}%
and, by Theorem 2.2 in \cite{stri},%
\begin{eqnarray*}
&&\int \left( \int J^{\beta }\Phi (s,x,y)|^{2}\frac{dy}{|y|^{d+\alpha }}%
\right) ^{p/2}dx \\
&=&\int \left( \int |J^{\beta }u(s,x+y)-J^{\beta }u(s,x)|^{2}\frac{dy}{%
|y|^{d+\alpha }}\right) ^{p/2} \\
&\leq &C\left\vert u\right\vert _{\mathbb{H}_{p}^{\beta +\alpha /2}}^{p}.
\end{eqnarray*}%
By definition of the norm,%
\begin{eqnarray*}
&&\mathbf{E}\int_{0}^{T}\int \int J^{\beta }\Phi (s,x,y)|^{p}\frac{dydxds}{%
|y|^{d+\alpha }} \\
&=&\mathbf{E}\int_{0}^{T}\int \int |J^{\beta }u(s,x+y)-J^{\beta }u(s,x)|^{p}%
\frac{dydxds}{|y|^{d+\alpha }} \\
&\leq &C\left\vert u\right\vert _{\mathbb{B}_{pp}^{\beta +\alpha
/p}}^{p}\leq C\left\vert u\right\vert _{\mathbb{H}_{p}^{\beta +\alpha
/2}}^{p}.
\end{eqnarray*}
\end{proof}

We will need the following auxiliary statement as well. Let $(\overline{%
\Omega },\overline{\mathcal{F}},\overline{\mathbf{P}})$ be a complete
probability space. We introduce the product of probability spaces 
\begin{equation*}
(\widetilde{\Omega },\widetilde{\mathcal{F}},\widetilde{\mathbf{P}})=(\Omega
\times \overline{\Omega },\mathcal{F}\otimes \overline{\mathcal{F}},\mathbf{P%
}\times \overline{\mathbf{P}}).
\end{equation*}%
Let $\widetilde{\mathbb{F}}^{\prime }=(\widetilde{\mathcal{F}}_{t}^{\prime
}) $ be the usual augmentation of $(\mathcal{F}_{t}\otimes \overline{%
\mathcal{F}})$ (see \cite{delmey}). Obviously, $\eta (dt,d\upsilon )$ is a $(%
\widetilde{\mathbb{F}}^{\prime },\mathbf{\tilde{P}})$-martingale measure.
For a measurable integrable function $F$ on $\tilde{\Omega}=\Omega \times 
\bar{\Omega}$ we denote%
\begin{eqnarray*}
\mathbf{E}^{(1)}F &=&\int F(\omega ,\bar{\omega})\mathbf{P}(d\omega ),\omega
\in \Omega , \\
\mathbf{E}^{(2)}F &=&\int F(\omega ,\bar{\omega})\mathbf{\bar{P}}(d\bar{%
\omega}),\bar{\omega}\in \bar{\Omega}.
\end{eqnarray*}

\begin{lemma}
\label{lep1}(a) Let $\xi (s,\omega ,\bar{\omega},\upsilon )$ be $\mathcal{R}(%
\widetilde{\mathbb{F}}^{\prime })\otimes \mathcal{U}$-measurable and 
\begin{equation*}
\mathbf{E}^{\mathbf{\tilde{P}}}\int_{0}^{T}\int_{U}|\xi (s,\upsilon
)|^{2}\Pi (d\upsilon )ds<\infty .
\end{equation*}%
Then $\mathbf{\tilde{P}}$-a.s. for all $t\geq 0,$%
\begin{eqnarray*}
\mathbf{E}^{(1)}\int_{0}^{t}\int \xi (s,\upsilon )\eta (ds,d\upsilon ) &=&0,
\\
\mathbf{E}^{(2)}\int_{0}^{t}\int \xi (s,\upsilon )\eta (ds,d\upsilon )
&=&\int_{0}^{t}\int \mathbf{E}^{(2)}\xi (s,\upsilon )\eta (ds,d\upsilon ).
\end{eqnarray*}

(b) Let $\xi (s,\omega ,\bar{\omega})$ be $Y$-valued $\mathcal{R}(\widetilde{%
\mathbb{F}}^{\prime })$-measurable and 
\begin{equation*}
\mathbf{E}^{\mathbf{\tilde{P}}}\int_{0}^{T}|\xi (s)|_{Y}^{2}ds<\infty .
\end{equation*}%
Then $\mathbf{\tilde{P}}$-a.s. for all $t\geq 0,$%
\begin{eqnarray*}
\mathbf{E}^{(1)}\int_{0}^{t}\xi (s)dW_{s} &=&0, \\
\mathbf{E}^{(2)}\int_{0}^{t}\xi (s)dW_{s} &=&\int_{0}^{t}\int \mathbf{E}%
^{(2)}\xi (s)dW_{s}.
\end{eqnarray*}
\end{lemma}

\begin{proof}
(a) Obviously,%
\begin{equation*}
M_{t}=\int_{0}^{t}\int \xi (s,\upsilon )\eta (ds,d\upsilon ),0\leq t\leq T,
\end{equation*}%
is a $(\widetilde{\mathbb{F}}^{\prime },\mathbf{\tilde{P}})$-martingale.
Then for any $A\in \mathcal{\bar{F}}$,%
\begin{equation*}
0=\int \chi _{A}(\bar{\omega})M_{t}(\omega ,\bar{\omega})\mathbf{\tilde{P}}%
(d\omega ,d\bar{\omega})=\int \chi _{A}(\bar{\omega})\mathbf{E}%
^{(1)}M_{t}(\cdot ,\bar{\omega})\mathbf{\bar{P}}(d\bar{\omega}).
\end{equation*}%
Since $A\in \mathcal{\bar{F}}$ is arbitrary, it follows that $\mathbf{E}%
^{(1)}M_{t}(\bar{\omega})=0$ $\mathbf{\bar{P}}$-a.s.

Obviously, there is a sequence of the form%
\begin{equation*}
\xi _{n}(s,\omega ,\bar{\omega},\upsilon )=\sum_{k=1}^{N_{n}}\phi _{n,k}(%
\bar{\omega})\xi _{n,k}(s,\omega ,\upsilon )
\end{equation*}%
such that $\phi _{n,k}$ are $\mathcal{\bar{F}}$- and $\xi _{n,k}$ are $%
\mathcal{P}(\mathbb{F})\otimes \mathcal{U}$-measurable,%
\begin{equation*}
\mathbf{E}^{\mathbf{\tilde{P}}}\int_{0}^{T}\int \phi _{n,k}(\bar{\omega}%
)^{2}\xi _{n,k}(s,\omega ,\upsilon )^{2}\Pi (d\upsilon )ds<\infty ,n\geq
1,1\leq k\leq N_{n},
\end{equation*}%
and 
\begin{equation*}
\mathbf{E}^{\mathbf{\tilde{P}}}\int_{0}^{T}\int (\xi _{n}(s,\omega ,,\bar{%
\omega},\upsilon )-\xi _{n}(s,\omega ,,\bar{\omega},\upsilon ))^{2}\Pi
(d\upsilon )ds\rightarrow 0
\end{equation*}%
as $n\rightarrow \infty $. Since for any $k,n$%
\begin{eqnarray*}
&&\mathbf{E}^{(2)}\int_{0}^{t}\int \phi _{n,k}\xi _{n,k}(s,\upsilon )\eta
(ds,d\upsilon ) \\
&=&\mathbf{E}^{\mathbf{\bar{P}}}\phi _{n,k}(\bar{\omega})\int_{0}^{t}\int
\xi _{n,k}(s,\upsilon )\eta (ds,d\upsilon )=\int_{0}^{t}\int \mathbf{E}^{%
\mathbf{\bar{P}}}\phi _{n,k}(\bar{\omega})\xi _{n,k}(s,\upsilon )\eta
(ds,d\upsilon ) \\
&=&\int_{0}^{t}\int \mathbf{E}^{(2)}[\phi _{n,k}\xi _{n,k}(s,\upsilon )]\eta
(ds,d\upsilon ),
\end{eqnarray*}%
we have%
\begin{equation*}
\mathbf{E}^{(2)}\int_{0}^{t}\int \xi _{n}(s,\upsilon )\eta (ds,d\upsilon
)=\int_{0}^{t}\int \mathbf{E}^{(2)}\xi _{n}(s,\upsilon )\eta (ds,d\upsilon )
\end{equation*}%
and the statement follows by passing to the limit.

(b) The satement is shown by repeating with obvious changes the proof of the
part (a).
\end{proof}

\section{Model problem. Partial case I}

In this section, we consider the Cauchy problem%
\begin{eqnarray}
du(t,x) &=&\big(A_{0}^{(\alpha )}u-\lambda u+f\big)(t,x)dt  \label{eq15} \\
&&+\int_{U}\Phi (t,x,\upsilon )\eta (dt,d\upsilon )+h(t,x)dW_{t}%
\mbox{\quad
in }E,  \notag \\
u(0,x) &=&u_{0}(x)\mbox{\quad in }\mathbf{R}^{d}  \notag
\end{eqnarray}%
for smooth in $x$ input functions $u_{0},f,\Phi ,h$.

The operator $A_0^{(\alpha)},\ \alpha\in(0,2]$, is defined by%
\begin{eqnarray*}
A_{0}^{(\alpha )}u(t,x) &=&\int \nabla _{y}^{\alpha }u(t,x)m_{0}^{(\alpha
)}(t,y)\frac{dy}{|y|^{d+\alpha }}1_{\alpha\in(0,2)} \\
&&+(b(t),\nabla u(t,x))1_{\alpha =1}+\frac12{\delta }\Delta u(t,x)1_{\alpha
=2},
\end{eqnarray*}
where $\delta>0$, 
\begin{equation*}
\nabla _{y}^{\alpha }u(t,x) = u(t,x+y)-u(t,x)-(\nabla u(t,x),y)\chi
^{(\alpha )}{(y)},
\end{equation*}%
$\chi ^{(\alpha )}{(y)}=1_{\alpha\in(1,2)}+1_{|y|\leq1}1_{\alpha=1}$ and $%
\Delta$ is the Laplace operator in $\mathbf{R}^d$.

\subsection{Auxiliary results}

\bigskip In terms of Fourier transform%
\begin{equation*}
\mathcal{F}(A_{0}^{(\alpha )}u)(t,\xi )=\psi _{0}^{(\alpha )}(t,\xi )%
\mathcal{F}u(t,\xi ),
\end{equation*}%
where%
\begin{eqnarray*}
\psi _{0}^{(\alpha )}(t,\xi ) &=&-c_{0}\int_{S^{d-1}}|(w,\xi )|^{\alpha }%
\Big[1-i\Big(\tan \frac{\alpha \pi }{2}\,\mbox{sgn}(w,\xi )1_{\alpha \neq 1}
\\
&&-\frac{2}{\pi }\,\mbox{sgn}(w,\xi )\ln |(w,\xi )|1_{\alpha =1}\Big)\Big]%
m_{0}^{(\alpha )}(t,w)\mu _{d-1}(dw)1_{\alpha \in (1,2)} \\
&&+i(b(t),\xi )1_{\alpha =1}-\frac{1}{2}{\delta }|\xi |^{2}1_{\alpha =2}.
\end{eqnarray*}%
and $c_{0}=c_{0}(\alpha )$ is a positive constant.

Let us introduce the functions%
\begin{eqnarray*}
G_{s,t}^{(\alpha)}(x ) &=&\mathcal{F}^{-1}\bigg\{\exp \bigg( %
\int_{s}^{t}\psi _{0}^{(\alpha )}(r,\xi )dr\bigg)\bigg\}(x), \\
G_{s,t}^{(\alpha),\lambda}(x) &=&e^{-\lambda(t-s)}G^{(\alpha)
}_{s,t}(x)\quad 0\leq s<t\leq T,\ x\in\mathbf{R}^d.
\end{eqnarray*}

\begin{remark}
\label{re2} \emph{The function $G_{s,t}^{(\alpha )}(\alpha )(x)$ is the
fundamental solution of the equation $\partial _{t}u=A_{0}^{(\alpha )}u$. On
the other hand (see, e.g. \cite{SaT94}), $G_{s,t}^{(\alpha )}$ is the
density function of an $\alpha $-stable distribution. Hence, $%
G_{s,t}^{(\alpha )}\geq 0$ and} 
\begin{equation*}
\int_{\mathbf{R}^{d}}G_{s,t}^{(\alpha )}(x)dx=1.
\end{equation*}
\end{remark}

Further, for brevity of notation, we will drop the superscript $\alpha$ in $%
G_{s,t}^{(\alpha),\lambda}$ and $G_{s,t}^{(\alpha)}$.

For a representation of solution to (\ref{eq15}) we introduce the following
operators:%
\begin{eqnarray*}
T_{t}^{\lambda }u_{0}(x) &=&G_{0,t}^{\lambda }\ast u_{0}(x),\quad u_{0}\in 
\mathfrak{D}_{p}(\mathbf{R}^{d}), \\
R_{\lambda }f(t,x) &=&\int_{0}^{t}G_{s,t}^{\lambda }\ast f(s,x)ds,\quad f\in 
\mathfrak{D}_{p}(E), \\
\widetilde{R}_{\lambda }\Phi (t,x) &=&\int_{0}^{t}\int_{U}G_{s,t}^{\lambda
}\ast \Phi (s,x,\upsilon )\eta (ds,d\upsilon ),\quad \Phi \in \mathfrak{D}%
_{2,p}(E)\cap \mathfrak{D}_{p,p}(E), \\
\bar{R}_{\lambda }h(t,x) &=&\int_{0}^{t}G_{s,t}^{\lambda }\ast
h(s,x)dW_{s},\quad h\in \mathfrak{D}_{p}(E,Y).
\end{eqnarray*}

\begin{lemma}
\label{le1}Let $\alpha \in (0,2],\ p\geq 2$ and Assumption \emph{A$_{0}$} be
satisfied. Then there is a constant $C=C(\alpha ,p,d,K,\delta )$ such that
the following estimates hold:%
\begin{eqnarray*}
(\mathrm{i})\quad &&|T^{\lambda }u_{0}|_{\mathbb{L}_{p}(E)}\leq \rho
_{\lambda }^{\frac{1}{p}}|u_{0}|_{\mathbb{L}_{p}(\mathbf{R}^{d})},\quad
u_{0}\in \mathfrak{D}_{p}(\mathbf{R}^{d}), \\
(\mathrm{ii})\quad &&|{R}_{\lambda }f|_{\mathbb{L}_{p}(E)}\leq \rho
_{\lambda }|f|_{\mathbb{L}_{p}(E)},\quad f\in \mathfrak{D}_{p}(E), \\
(\mathrm{iii})\quad &&|\widetilde{R}_{\lambda }\Phi |_{\mathbb{L}%
_{p}(E)}\leq C\sum_{r=2,p}\rho _{\lambda }^{\frac{1}{r}}|\Phi |_{\mathbb{L}%
_{r,p}(E)},\quad \Phi \in \mathfrak{D}_{2,p}(E)\cap \mathfrak{D}_{p,p}(E), \\
(\mathrm{iv})\quad &&|\bar{R}_{\lambda }h|_{\mathbb{L}_{p}(E)}\leq C\rho
_{\lambda }^{\frac{1}{2}}|h|_{\mathbb{L}_{p}(E,Y)},\quad h\in \mathfrak{D}%
_{p}(E,Y),
\end{eqnarray*}%
where $\rho _{\lambda }=T\wedge \frac{1}{\lambda }$.
\end{lemma}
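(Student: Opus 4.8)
The plan is to reduce all four estimates to the two structural facts recorded in Remark \ref{re2}: the kernel $G_{s,t}^{\lambda}$ is non-negative and $\int_{\mathbf{R}^{d}}G_{s,t}^{\lambda}(x)\,dx=e^{-\lambda(t-s)}$, so that $G_{s,t}^{\lambda}(x-\cdot)\,dz$ is a sub-probability measure of total mass $e^{-\lambda(t-s)}$. Each estimate then follows from three ingredients: Jensen's inequality for this measure (which for $q\geq1$ gives $|G_{s,t}^{\lambda}\ast\psi(x)|^{q}\leq e^{-\lambda(t-s)(q-1)}(G_{s,t}^{\lambda}\ast|\psi|^{q})(x)$), Fubini's theorem to integrate out $x$ using $\int G_{s,t}^{\lambda}\,dx=e^{-\lambda(t-s)}$, and the elementary bound $\int_{0}^{t}e^{-c\lambda(t-s)}\,ds\leq\rho_{\lambda}$ valid for every $c\geq1$ (the integral is at most $t\leq T$ and at most $(c\lambda)^{-1}\leq\lambda^{-1}$). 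Constants of the form $c$ are absorbed into $C$.

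For (i) I would apply Jensen with $q=p$ to $T_{t}^{\lambda}u_{0}=G_{0,t}^{\lambda}\ast u_{0}$, integrate in $x$ to get $\int|T_{t}^{\lambda}u_{0}(x)|^{p}\,dx\leq e^{-\lambda tp}|u_{0}|_{p}^{p}$, and then integrate in $t$ and take expectation, the time integral $\int_{0}^{T}e^{-\lambda tp}\,dt\leq\rho_{\lambda}$ producing exactly the factor $\rho_{\lambda}^{1/p}$. For (ii) the same Jensen estimate is applied to the space-time kernel $G_{s,t}^{\lambda}(x-z)\,ds\,dz$ on $[0,t]\times\mathbf{R}^{d}$, whose total mass is $\int_{0}^{t}e^{-\lambda(t-s)}\,ds\leq\rho_{\lambda}$; this gives one factor $\rho_{\lambda}^{p-1}$ from the Jensen mass, after which integrating in $x$, then swapping the $s$ and $t$ integrals by Fubini and bounding $\int_{s}^{T}e^{-\lambda(t-s)}\,dt\leq\rho_{\lambda}$ supplies the remaining factor $\rho_{\lambda}$, for the total $\rho_{\lambda}^{p}$ and hence $\rho_{\lambda}$ after taking $p$-th roots.

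The stochastic estimate (iii) is the heart of the lemma. For each fixed $(t,x)$ the integrand $s\mapsto G_{s,t}^{\lambda}\ast\Phi(s,\cdot,\upsilon)(x)$ is adapted (the kernel is deterministic and $\Phi$ is progressive), so $\widetilde{R}_{\lambda}\Phi(t,x)$ is a well-defined stochastic integral against $\eta$, and I would invoke the moment inequality (Corollary 2 in \cite{mikprag1}) to bound $\mathbf{E}|\widetilde{R}_{\lambda}\Phi(t,x)|^{p}$ by $C$ times the sum of a ``$p/2$-term'' $\mathbf{E}\big(\int_{0}^{t}\int_{U}|G_{s,t}^{\lambda}\ast\Phi(s,\cdot,\upsilon)(x)|^{2}\Pi(d\upsilon)\,ds\big)^{p/2}$ and a ``$p$-term'' $\mathbf{E}\int_{0}^{t}\int_{U}|G_{s,t}^{\lambda}\ast\Phi(s,\cdot,\upsilon)(x)|^{p}\Pi(d\upsilon)\,ds$. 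The $p$-term is handled exactly as in (i): Jensen and Fubini in $x$ turn it into $\rho_{\lambda}\,|\Phi|_{\mathbb{L}_{p,p}(E)}^{p}$, giving the $r=p$ contribution. The main obstacle is the $p/2$-term, where I expect the exponent bookkeeping to require care. Here I would first use Jensen ($q=2$) to dominate the inner square by $e^{-\lambda(t-s)}\,G_{s,t}^{\lambda}\ast\big(\int_{U}|\Phi(s,\cdot,\upsilon)|^{2}\Pi(d\upsilon)\big)(x)$, then estimate the $L_{p/2}(dx)$-norm of the resulting $s$-integral by Minkowski's integral inequality followed by Young's convolution inequality $\|G_{s,t}^{\lambda}\ast\psi\|_{p/2}\leq e^{-\lambda(t-s)}\|\psi\|_{p/2}$, which removes the convolution at the cost of a second factor $e^{-\lambda(t-s)}$. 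A final Jensen step in $s$ against the weight $e^{-2\lambda(t-s)}$ (mass $\leq\rho_{\lambda}$), integration in $t$, and Fubini in $(s,t)$ then collapse everything to $C\rho_{\lambda}^{p/2}\,|\Phi|_{\mathbb{L}_{2,p}(E)}^{p}$, i.e. the $r=2$ contribution $\rho_{\lambda}^{1/2}$. Summing the two pieces and taking $p$-th roots yields (iii). Finally, (iv) is the $p/2$-term argument verbatim, with the norm $|\cdot|_{Y}$ in place of the $L_{2}(\Pi)$-norm and the Burkholder--Davis--Gundy inequality for the cylindrical Wiener integral (Theorem 1 in \cite{mir3mol}) replacing Corollary 2 in \cite{mikprag1}; there is no companion $p$-term, so one obtains directly $C\rho_{\lambda}^{1/2}\,|h|_{\mathbb{L}_{p}(E,Y)}$. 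The only genuine difficulty throughout is tracking the powers of $e^{-\lambda(t-s)}$ so that each operator emits the sharp power $\rho_{\lambda}^{1/p}$, $\rho_{\lambda}$, $\rho_{\lambda}^{1/r}$, and $\rho_{\lambda}^{1/2}$ respectively.
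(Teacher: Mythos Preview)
Your proposal is correct and follows essentially the same approach as the paper: both arguments rest on Remark~\ref{re2} (so that convolution with $G_{s,t}^{\lambda}$ contracts $L_p$ by a factor $e^{-\lambda(t-s)}$), the moment inequality for $\eta$ from Corollary~2 in \cite{mikprag1} yielding the $A+B$ splitting in (iii), the BDG/Doob inequality for the Wiener integral in (iv), and the H\"older/Jensen step (your space--time Jensen is the paper's inequality~(\ref{eq16})) to extract the powers of $\rho_{\lambda}$. The only cosmetic difference is that you phrase the convolution bound as Jensen's inequality for the sub-probability kernel while the paper writes it as Minkowski's inequality; the resulting intermediate expressions are identical.
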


\begin{proof}
(i)\enspace By Minkowski's inequality and Remark \ref{re2}, 
\begin{eqnarray*}
|T^{\lambda }u_0|_{\mathbb{L}_{p}(E)}^{p} &=&\mathbf{E}\int_{0}^{T}\int_{%
\mathbf{R}^{d}} |G_{0,t}^{\lambda}\ast u_{0}(x)|^{p}dxdt \leq \int_{0}^{T}
e^{-\lambda pt}|u_{0}|^{p}_{\mathbb{L}_p(\mathbf{R}^d)}dt \\
&\leq&\rho_{\lambda}|u_{0}|_{\mathbb{L}_{p}(\mathbf{R}^{d})}^{p}.
\end{eqnarray*}%
(ii)\enspace By Minkowski's inequality and Remark \ref{re2}, 
\begin{eqnarray*}
|R_{\lambda }f|_{\mathbb{L}_{p}(E)}^{p} &=&\mathbf{E}\int_{0}^{T}\int_{%
\mathbf{R}^{d}}\bigg\vert\int_{0}^{t} G^{\lambda}_{s,t}\ast f(s,x)ds%
\bigg\vert^{p}dxdt \\
&\leq &\int_{0}^{T}\bigg(\int_{0}^{t}\int_{\mathbf{R}^d}G^{%
\lambda}_{s,t}(y)|f(s,\cdot)|_{\mathbb{L}_p(\mathbf{R}^d)}dyds\bigg)^{p}dt \\
&=&\int_{0}^{T}\bigg(\int_{0}^{t}e^{-\lambda(t-s)}|f(s,\cdot)|_{\mathbb{L}_p(%
\mathbf{R}^d)}ds\bigg)^{p}dt \equiv I.
\end{eqnarray*}

Applying here H\"older's inequality, we get 
\begin{eqnarray}
I&\leq &\int_{0}^{T}\bigg(\int_{0}^{t}e^{-\lambda(t-s)}ds\bigg)%
^{p-1}\int_{0}^{t}e^{-\lambda(t-s)}|f(s, \cdot)|^p_{\mathbb{L}_p(\mathbf{R}%
^d)}dsdt  \notag \\
&\leq & \rho^{p-1}_{\lambda}\int_{0}^{T}\int_{s}^{T}e^{-\lambda(t-s)}|f(s,
\cdot)|^p_{\mathbb{L}_p(\mathbf{R}^d)}dtds \leq \rho^p_{\lambda}|f|^p_{%
\mathbb{L}_p(E)}.  \label{eq16}
\end{eqnarray}%
(iii)\enspace By Doob's and Minkowski's inequalities%
\begin{eqnarray*}
\mathbf{E}|\bar{R}_{\lambda }h(t,x) |^{p} &=&\mathbf{E}\bigg\vert%
\int_{0}^{t}G^{\lambda}_{s,t}\ast h (s,x)dW_s\bigg\vert^p \leq C\mathbf{E}%
\bigg(\int_{0}^{t}\big\vert G^{\lambda}_{s,t}\ast h (s,x)\big\vert^2_Yds%
\bigg)^{\frac{p}{2}} \\
&\leq& C\mathbf{E}\bigg(\int_{0}^{t}\big[ G^{\lambda}_{s,t}\ast |h (s,x)|_Y%
\big]^2ds\bigg)^{\frac{p}{2}} .
\end{eqnarray*}%
Therefore, by Minkowski's inequality and Remark~\ref{re2},%
\begin{eqnarray*}
|\bar{R}_{\lambda }h |^{p}_{\mathbb{L}_p(E)} &=&\mathbf{E}\int_{0}^{T}\int_{%
\mathbf{R}^d}\big\vert\bar R_{\lambda}h(t,x)\big\vert^p dxdt \\
&\leq& C\mathbf{E}\int_{0}^{T}\int_{\mathbf{R}^d}\bigg(\int_0^t\big[ %
G^{\lambda}_{s,t}\ast |h (s,x)|_Y\big]^2ds\bigg)^{\frac{p}{2}}dxdt \\
&\leq& C\int_0^T\bigg(\int_{0}^{t}\int_{\mathbf{R}^d}e^{-2\lambda(t-s)}
G_{s,t}(y)|h (s,\cdot)|^2_{\mathbb{L}_p(\mathbf{R}^d,Y)}dyds\bigg)^{\frac{p}{%
2}}dt \\
&=& C\int_0^T\bigg(\int_{0}^{t}e^{-2\lambda(t-s)}|h (s,\cdot)|^2_{\mathbb{L}%
_p(\mathbf{R}^d,Y)}ds\bigg)^{\frac{p}{2}}dt .
\end{eqnarray*}%
Now, similarly as in (\ref{eq16}) with $p$ replaced by $p/2$, we get%
\begin{equation*}
|\bar R_{\lambda}h|^p_{\mathbb{L}_p(E)}\leq C \rho_{\lambda}^{\frac{p}{2}%
}|h|^p_{\mathbb{L}_{p}(E)} .
\end{equation*}

By Corollary 2 in \cite{mikprag1}, 
\begin{equation*}
|\tilde{R}_{\lambda }\Phi |_{\mathbb{L}_{p}(E)}^{p}\leq C(A+B),
\end{equation*}%
where 
\begin{equation*}
A=\mathbf{E}\int_{0}^{T}\int_{\mathbf{R}^{d}}\bigg(\int_{0}^{t}\int_{U}\big[%
G_{s,t}^{\lambda }\ast \Phi (s,x,v)\big]^{2}\Pi (dv)ds\bigg)^{\frac{p}{2}%
}dxdt
\end{equation*}%
and 
\begin{equation*}
B=\mathbf{E}\int_{0}^{T}\int_{\mathbf{R}^{d}}\int_{0}^{t}\int_{U}\big\vert %
G_{s,t}^{\lambda }\ast \Phi (s,x,v)\big\vert^{p}\Pi (dv)dsdxdt.
\end{equation*}

By Minkowski's inequality and Remark~\ref{re2},%
\begin{eqnarray*}
A&\leq&\mathbf{E}\int_0^T\int_{\mathbf{R}^d}\bigg( \int_0^t \bigg[ %
G^{\lambda}_{s,t}\ast\bigg(\int_U\Phi^2(s,x,v)\Pi(dv)\bigg)^{\frac{1}{2}}%
\bigg]^2ds\bigg)^{\frac{p}{2}}dxdt \\
&\leq&\int_0^T\bigg( \int_0^t e^{-2\lambda(t-s)}|\Phi(s,\cdot,\cdot)|^{2}_{%
\mathbb{L}_{2,p}(\mathbf{R}^d)}ds\bigg)^{\frac{p}{2}}dt
\end{eqnarray*}%
and 
\begin{eqnarray*}
B&\leq&\mathbf{E}\int_0^T\int_{\mathbf{R}^d}\int_0^t \bigg[ %
G^{\lambda}_{s,t}\ast\bigg(\int_U|\Phi(s,x,v)|^p\Pi(dv)\bigg)^{\frac{1}{p}}%
\bigg]^pdsdxdt \\
&\leq&\int_0^T\int_0^t e^{-p\lambda(t-s)}|\Phi(s,\cdot,\cdot)|^{p}_{\mathbb{L%
}_{p,p}(\mathbf{R}^d)}dsdt.
\end{eqnarray*}%
Now, similarly as in (\ref{eq16}), we get 
\begin{equation*}
A\leq\rho_{\lambda}^{\frac{p}{2}}|\Phi|^p_{\mathbb{L}_{2,p}(E)}
\end{equation*}%
and 
\begin{equation*}
B\leq\rho_{\lambda}|\Phi|^p_{\mathbb{L}_{p,p}(E)}.
\end{equation*}
\end{proof}

\begin{lemma}
\label{le2}Let $\alpha \in (0,2],\ \beta \in \mathbf{R},\ p\geq 2$ and
Assumption $\mathbf{A_{0}}$ be satisfied. Then there is a constant $%
C=C(\alpha ,p,d,K,\delta ,T)$ such that the following estimates hold:%
\begin{eqnarray*}
(\mathrm{i})\quad &&|T^{\lambda }u_{0}|_{\mathbb{H}_{p}^{\beta +\alpha
}(E)}\leq C|u_{0}|_{\mathbb{B}_{pp}^{\beta +\alpha -\frac{\alpha }{p}}(%
\mathbf{R}^{d})},\ u_{0}\in \mathfrak{D}_{p}(\mathbf{R}^{d}), \\
(\mathrm{ii})\quad &&|R_{\lambda }f|_{\mathbb{H}_{p}^{\beta +\alpha
}(E)}\leq C|f|_{\mathbb{H}_{p}^{\beta }(E)},\ f\in \mathfrak{D}_{p}(E), \\
(\mathrm{iii})\quad &&|\widetilde{R}_{\lambda }\Phi |_{\mathbb{H}_{p}^{\beta
+\alpha }(E)}\leq C\sum_{r=2,p}|\Phi |_{\mathbb{B}_{r,pp}^{\beta +\alpha -%
\frac{\alpha }{r}}(E)},\ \Phi \in \mathfrak{D}_{2,p}(E)\cap \mathfrak{D}%
_{p,p}(E), \\
(\mathrm{iv})\quad &&|\bar{R}_{\lambda }h|_{\mathbb{H}_{p}^{\beta +\alpha
}(E)}\leq C|h|_{\mathbb{H}_{p}^{\beta +\alpha /2}(E,Y)},\ h\in \mathfrak{D}%
_{p}(E,Y).
\end{eqnarray*}
\end{lemma}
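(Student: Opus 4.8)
The plan is to deduce all four estimates from the $\mathbb{L}_p$-bounds of Lemma \ref{le1} by a Littlewood--Paley decomposition, the gain of $\alpha$ derivatives being produced by the smoothing of the kernel $G_{s,t}^{\lambda}$. The analytic input is the parabolicity encoded in Assumption $\mathbf{A_0}$: condition (iii) yields, after passing to polar coordinates in the symbol $\psi_0^{(\alpha)}$, a bound $\mathrm{Re}\,\psi_0^{(\alpha)}(t,\xi)\le -c|\xi|^{\alpha}$ with $c=c(\delta)>0$ uniformly in $t$, while (i) controls the $y$-derivatives of $m_0^{(\alpha)}$ up to order $d_0=[d/2]+2$, which is exactly the number of derivatives needed to bound the relevant kernels in $L_1$ through a Mikhlin--H\"ormander multiplier estimate. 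From these two facts I would first establish the localized kernel decay
\begin{equation*}
\|\varphi_j\ast G_{s,t}^{\lambda}\|_{L_1(\mathbf{R}^d)}\le C\,e^{-\lambda(t-s)}e^{-c(t-s)2^{j\alpha}},\qquad j\ge 1 ,
\end{equation*}
which says that on the frequency block $\{|\xi|\sim 2^j\}$ the kernel behaves like that of a semigroup with effective killing rate $\lambda+c2^{j\alpha}$ (for $j=0$ one simply uses that $G_{s,t}^{\lambda}$ is, by Remark \ref{re2}, $e^{-\lambda(t-s)}$ times a probability density).

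Granting this, each of (i)--(iv) reduces to its counterpart in Lemma \ref{le1} applied blockwise. Decomposing the input by $\varphi_j$ and using $\varphi_j=\varphi_j\ast\tilde\varphi_j$ with $\tilde\varphi_j=\sum_{|k-j|\le1}\varphi_k$, the block-$j$ piece of $T^{\lambda}$, $R_{\lambda}$, $\widetilde R_{\lambda}$, $\bar R_{\lambda}$ is an operator of the same form but built from $\varphi_j\ast G_{s,t}^{\lambda}$; repeating verbatim the Minkowski/H\"older computations of Lemma \ref{le1} (and, for the stochastic terms, Doob's inequality and Corollary 2 in \cite{mikprag1}), with $\lambda$ replaced by $\lambda+c2^{j\alpha}$, produces the factor $\rho_{\lambda+c2^{j\alpha}}\le C2^{-j\alpha}$ and its fractional powers. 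Thus the block-$j$ output carries a gain $2^{-j\alpha}$ for $R_{\lambda}$ (power $1$), $2^{-j\alpha/p}$ for $T^{\lambda}$, $2^{-j\alpha/r}$ for the two pieces $r=2,p$ of $\widetilde R_{\lambda}$, and $2^{-j\alpha/2}$ for $\bar R_{\lambda}$. Summing the blocks against the weight $2^{(\beta+\alpha)j}$ then yields precisely: the full order-$\alpha$ maximal regularity $|R_\lambda f|_{\mathbb{H}_p^{\beta+\alpha}}\le C|f|_{\mathbb{H}_p^{\beta}}$ in (ii); the trace loss $\alpha/p$ and the Besov space $\mathbb{B}_{pp}^{\beta+\alpha-\alpha/p}$ in (i); the two Besov norms $\mathbb{B}_{r,pp}^{\beta+\alpha-\alpha/r}$, $r=2,p$, in (iii); and the half-order gain $\alpha/2$, hence $|h|_{\mathbb{H}_p^{\beta+\alpha/2}}$, in (iv).

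I expect two points to require care, and the first is the main obstacle. The localized kernel decay must hold under the \emph{weak} nondegeneracy of $\mathbf{A_0}$(iii) --- where $m_0^{(\alpha)}$ may vanish on a large part of the sphere --- and with only $d_0$ derivatives; verifying the Mikhlin-type bounds on $\partial_\xi^{\gamma}\exp(\int_s^t\psi_0^{(\alpha)}\,dr)$ uniformly in $t$ and in the dyadic scale is the technical heart, and the case $\alpha=1$ needs the cancellation built into $\mathbf{A_0}$(ii) (and the logarithmic term in $\psi_0^{(\alpha)}$) together with the drift $b$, which is of the same order $\alpha=1$ and hence scale-compatible. Second, the blockwise summation naturally controls the $\ell^p$-in-$j$ (Besov) quantity, whereas the left-hand sides of (i)--(iv) are the \emph{square-function} ($\mathbb{H}_p^{\beta+\alpha}$) norms in space; for $p>2$ these are strictly stronger, so the dyadic pieces must be treated as a single $\ell^2$-valued Fourier-multiplier (Calder\'on--Zygmund) operator on $L_p(\mathbf{R}^d)$, which is admissible because $L_p$ is a UMD space and because the $d_0$ derivative bounds furnish the required multiplier condition; for the clean maximal-regularity estimate (ii) this collapses to the $L_p$-boundedness of the single operator $J^{\alpha}R_{\lambda}$, a parabolic singular integral.
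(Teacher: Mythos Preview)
Your overall strategy is correct and matches the paper's: the kernel decay
\(
|\tilde\varphi_j\ast G_{s,t}|_{L_1}\le C e^{-c(t-s)2^{j\alpha}}\sum_{k\le d_0}((t-s)2^{j\alpha})^k
\)
(with the harmless polynomial prefactor coming from the Mikhlin bound under $\mathbf{A_0}$(i)) is exactly what drives all four estimates. For (ii), (iii) and (iv) the paper does not re-derive anything but simply quotes Theorem~2.1 of \cite{MiP922} and Proposition~2/Corollary~1 of \cite{mikprag1}; these references indeed contain the parabolic Calder\'on--Zygmund arguments you outline, so your plan for those parts is the same as what is behind the citations.

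The one place where the paper differs from your sketch is in part (i), which it proves in full and handles more cheaply than you suggest. Your second ``obstacle'' --- that blockwise summation only controls a Besov quantity on the left, so one needs $L_p(\ell^2)$ multiplier/UMD theory --- is bypassed by a completely elementary trick: for $p\ge 2$ the Minkowski inequality in $x$ gives $\|(\varphi_j\ast v)\|_{L_p^x(\ell^2_j)}\le \|(\varphi_j\ast v)\|_{\ell^2_j(L_p^x)}$, so
\[
|T_\cdot u_0|_{H_p^{\beta}(E)}^p\le \int_0^T\Big(\sum_j 2^{2\beta j}|h_t^j\ast u_{0,j}|_p^2\Big)^{p/2}dt
\le C\int_0^T\Big(\sum_j e^{-c2^{\alpha j}t}\,2^{2\beta j}|u_{0,j}|_p^2\Big)^{p/2}dt,
\]
and the remaining \emph{scalar} time integral is handled by splitting $\{j:2^{\alpha j}t\le1\}$ versus its complement and applying H\"older in $j$ on each piece; both pieces collapse to $\sum_j 2^{(\beta p-\alpha)j}|u_{0,j}|_p^p=|u_0|_{B_{pp}^{\beta-\alpha/p}}^p$. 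No vector-valued CZ is needed here. The UMD route you propose would also work, but is heavier than necessary for (i); it is, however, essentially what underlies the cited references for (ii)--(iv).
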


\begin{proof}
We have $\mathbb{L}_{p}$-estimates by Lemma \ref{le1}. The estimate (ii)
follows by Theorem 2.1 in \cite{MiP922}. The estimate (iii) is proved in 
\cite{mikprag1} (we apply Corollary 1 and Proposition~2 with $V=L_{2}(U,%
\mathcal{U},\Pi )$). The estimate (iv) is proved in \cite{mikprag1}
(Proposition 2 with $V=Y$).

It remains to prove (i). We follow the arguments in \cite{mikprag1} (see 
\cite{cl} as well). Let 
\begin{equation*}
T_{t}u_{0}(x)=G_{0,t}\ast u_{0}(x).
\end{equation*}%
Since 
\begin{equation*}
|T_{\cdot }^{\lambda }u_{0}|_{\mathbb{H}_{p}^{\beta }(E)}^{p}=\mathbf{E}%
\int_{0}^{T}e^{-\lambda pt}|T_{t}u_{0}|_{H_{p}^{\beta }{(\mathbf{R}^{d})}%
}^{p}dt\leq \mathbf{E}|T_{\cdot }u_{0}|_{H_{p}^{\beta }{(E)}}^{p},
\end{equation*}%
it suffices to prove the estimate 
\begin{equation}
|T_{\cdot }u_{0}|_{H_{p}^{\beta }(E)}\leq C|u_{0}|_{B_{pp}^{\beta -\frac{%
\alpha }{p}}(\mathbf{R}^{d})}  \label{eq23}
\end{equation}%
for non-random functions $u_{0}\in \mathfrak{D}_{p}{(\mathbf{R}^{d})}$.

In order to show (\ref{eq23}), we follow \cite{MiP922}. Let 
\begin{eqnarray*}
\widetilde{\varphi }_{0} &=&{}\varphi _{0}+\varphi _{1}, \\
\widetilde{\varphi }_{j} &=&{}\varphi _{j-1}+\varphi _{j}+\varphi
_{j+1},\quad j\geqslant 1,
\end{eqnarray*}%
where the functions $\varphi _{j},\ j\geq 0,$ are defined in Subsection \ref%
{test}. Let%
\begin{equation*}
h_{t}^{j}(x)=G_{0,t}\ast \widetilde{\varphi }_{j}(x),\quad j\geqslant 0.
\end{equation*}%
According to Lemma 12 in \cite{MiP922} or inequality (36) and Lemma 16 in 
\cite{mikprag1}, there are positive constants $C$ and $c$ such that for all $%
s<t,\ j\geq 1,$%
\begin{equation}
\big\vert h_{t}^{j}\big\vert_{1}\leq Ce^{-c2^{j\alpha }t}\sum_{k\leq d_{0}}%
\big(2^{j\alpha }t\big)^{k},\quad |h_{t}^{0}|_{1}\leq C.  \label{ff1}
\end{equation}%
Here and in the remaining part of the proof we use the notation $|\cdot
|_{p}=|\cdot |_{L_{p}(\mathbf{R}^{d})},\ p\geq 1$.

We set%
\begin{equation*}
u_{0,j}=u_{0}\ast \varphi _{j},\quad j\geqslant 0.
\end{equation*}%
Obviously, 
\begin{equation*}
{}\varphi _{j}\ast T_{t}u_{0}=T_{t}(u_{0}\ast \varphi
_{j})=T_{t}u_{0,j},\quad j\geqslant 0.
\end{equation*}%
Since $\varphi _{j}=\varphi _{j}\ast \widetilde{\varphi }_{j},\quad j\geq 0$%
, we have 
\begin{equation*}
T_{t}u_{0,j}=h_{t}^{j}\ast u_{0,j},\quad j\geq 0.
\end{equation*}%
By Minkowski's inequality, 
\begin{eqnarray}
|T_{\cdot }u_{0}|_{H_{p}^{\beta }(E)}^{p} &=&\int_{0}^{T}\int_{\mathbf{R}%
^{d}}\bigg(\sum_{j=0}^{\infty }2^{2\beta j}\big[\varphi _{j}\ast
T_{t}u_{0}(x)\big]^{2}\bigg)^{\frac{p}{2}}dxdt  \notag \\
&=&\int_{0}^{T}\int_{\mathbf{R}^{d}}\bigg(\sum_{j=0}^{\infty }2^{2\beta j}%
\big[h_{t}^{j}\ast u_{0,j}(x)\big]^{2}\bigg)^{\frac{p}{2}}dxdt  \notag \\
&\leq &\int_{0}^{T}\bigg(\sum_{j=0}^{\infty }2^{2\beta j}\big\vert %
h_{t}^{j}\ast u_{0,j}\big\vert_{p}^{2}\bigg)^{\frac{p}{2}}dt.  \label{ff20}
\end{eqnarray}%
Applying Minkowski's inequality and (\ref{ff1}), we get%
\begin{equation*}
|h_{t}^{j}\ast u_{0,j}|_{p}\leq |h_{t}^{j}|_{1}\,|u_{0,j}|_{p}\leq
Ce^{-c2^{\alpha j}t}|u_{0,j}|_{p},\quad j\geq 0.
\end{equation*}%
Hence, by (\ref{ff20}) 
\begin{equation}
|T_{\cdot }u_{0}|_{H_{p}^{\beta }(E)}^{p}\leq C\int_{0}^{T}\bigg(%
\sum_{j=0}^{\infty }e^{-c2^{\alpha j}t}2^{2\beta j}|u_{0,j}|_{p}^{2}\bigg)^{%
\frac{p}{2}}dt.  \label{eq18}
\end{equation}%
If $p=2$, we have immediately 
\begin{eqnarray*}
|T_{\cdot }u_{0}|_{H_{p}^{\beta }(E)}^{p} &\leq
&C\int_{0}^{T}\sum_{j=0}^{\infty }e^{-c2^{\alpha j}t}2^{2\beta
j}|u_{0,j}|_{2}^{2}dt \\
&\leq &C\sum_{j=0}^{\infty }2^{-{\alpha j}}2^{2\beta
j}|u_{0,j}|_{2}^{2}=C|u_{0}|_{B_{22}^{\beta -\frac{\alpha }{2}}(\mathbf{R}%
^{d})}.
\end{eqnarray*}%
If $p>2$, we split the sum in (\ref{eq18}) as follows: 
\begin{eqnarray*}
&&\sum_{j=0}^{\infty }e^{-c2^{\alpha j}t}2^{2\beta
j}|u_{0,j}|_{p}^{2}=\sum_{j\in J}e^{-c2^{\alpha j}t}2^{2\beta
j}|u_{0,j}|_{p}^{2} \\
&&\qquad +\sum_{j\in \mathbb{N}_{0}\setminus J}e^{-c2^{\alpha j}t}2^{2\beta
j}|u_{0,j}|_{p}^{2}=A(t)+B(t),
\end{eqnarray*}%
where $J=\{j\in \mathbb{N}_{0}\colon \ 2^{\alpha j}t\leq 1\}$.

Fix $\kappa \in (0,\frac{2\alpha }{p}).$ Using H\"{o}lder's inequality, we
get%
\begin{eqnarray*}
A(t) &\leq &\sum_{j\in J}2^{2\beta j}2^{\kappa j}2^{-\kappa
j}|u_{0,j}|_{p}^{2} \leq \bigg( \sum_{j\in J}2^{q\kappa j}\bigg) ^{1/q}%
\bigg( \sum_{j\in J}2^{p\beta j}2^{-p\kappa j/2}|u_{0,j}|_{p}^{p}\bigg) %
^{2/p}
\end{eqnarray*}%
with $q=\frac{p}{p-2}$. Since%
\begin{equation*}
\sum_{j\in J}2^{q\kappa j} \leq Ct^{-q\kappa/\alpha },
\end{equation*}%
we have%
\begin{eqnarray*}
A(t) &\leq &Ct^{-\frac{\kappa }{\alpha }}\bigg( \sum_{j\in J}2^{p\beta
j}2^{-p\kappa j/2}|u_{0,j}|_{p}^{p}\bigg) ^{\frac{2}{p}} \\
&=&Ct^{-\frac{\kappa }{\alpha }}\bigg( \sum_{j}1_{\{ t\leq 2^{-\alpha j}\}
}2^{p\beta j}2^{-p\kappa j/2}|u_{0,j}|_{p}^{p}\bigg) ^{\frac{2}{p}}.
\end{eqnarray*}%
So,%
\begin{eqnarray*}
\int_{0}^{T}A(t)^{\frac{p}{2}}dt &\leq &C\sum_{j}2^{p\beta j}2^{-p\kappa
j/2}|u_{0,j}|_{p}^{p}\int_{0}^{2^{-\alpha j}}t^{-\frac{p\kappa }{2\alpha }}dt
\\
&\leq&C\sum_{j}2^{-\alpha j}2^{p\beta j}|u_{0,j}|_{p}^{p}.
\end{eqnarray*}

By H\"{o}lder's inequality,%
\begin{equation*}
B(t)\leq \bigg\{\sum_{j\in \mathbb{N}_{0}\setminus J}e^{-c2^{\alpha j}t}%
\bigg\}^{\frac{1}{q}}\bigg\{\sum_{j\in \mathbb{N}_{0}\setminus
J}e^{-c2^{\alpha j}t}2^{\beta pj}|u_{0,j}|_{p}^{p}\bigg\}^{\frac{2}{p}}
\end{equation*}%
with $q=\frac{p}{p-2}$. Since $e^{-c2^{\alpha j}t}$ is decreasing in $j$, we
have 
\begin{equation*}
\sum_{j\in \mathbb{N}_{0}\setminus J}e^{-c2^{\alpha j}t}\leq \int_{t\geq
2^{-\alpha r}}e^{-c2^{-\alpha }2^{\alpha r}t}dr\leq C.
\end{equation*}%
Therefore, 
\begin{eqnarray*}
\int_{0}^{T}B(t)^{\frac{p}{2}}dt &\leq &C\sum_{j}2^{\beta
pj}|u_{0,j}|_{p}^{p}\int_{0}^{T}e^{-c2^{\alpha j}t}dt \\
&\leq &C\sum_{j}2^{-\alpha j}2^{\beta pj}|u_{0,j}|_{p}^{p}.
\end{eqnarray*}%
Finally,%
\begin{eqnarray*}
|T_{\cdot }u_{0}|_{H_{p}^{\beta }(E)}^{p} &\leq &C\bigg(\int_{0}^{T}A(t)^{%
\frac{p}{2}}dt+\int_{0}^{T}B(t)^{\frac{p}{2}}dt\bigg) \\
&\leq &C\sum_{j}2^{-\alpha j}2^{\beta
pj}|u_{0,j}|_{p}^{p}=C|u_{0}|_{B_{pp}^{\beta -\frac{\alpha }{p}}(\mathbf{R}%
^{d})}^{p}.
\end{eqnarray*}%
The lemma is proved.
\end{proof}

For a bounded measurable $m(y),y\in \mathbf{R}^{d},$ and $\alpha \in (0,2)$,
set for $v\in \mathcal{S}(\mathbf{R}^{d}),x\in \mathbf{R}^{d},$%
\begin{equation*}
\mathcal{L}v(x)=\mathcal{L}_{\alpha }v(x)=\int \nabla ^{\alpha }v(x)m(y)%
\frac{dy}{|y|^{d+\alpha }}.
\end{equation*}%
We will need the following continuity estimate (see \cite{babo0} for a
symmetric case, Theorem 2.1 in \cite{kimdong} for a general case using H\"{o}%
lder estimates, and Lemma 10 in \cite{mikprag2} for a direct proof).

\begin{lemma}
\label{lem2}(Lemma 10, \cite{mikprag2}) Let $|m(y)|\leq K,y\in \mathbf{R}%
^{d},p>1,$ and $\alpha \in (0,2).$ Assume%
\begin{equation*}
\int_{r\leq |y|\leq R}ym(y)\frac{dy}{|y|^{d+\alpha }}=0
\end{equation*}%
for any $0<r<R$ if $\alpha =1$. Then there is a constant $C$ such that%
\begin{equation*}
|\mathcal{L}_{\alpha }u|_{p}\leq CK|\partial ^{\alpha }u|_{p},u\in L_{p}(%
\mathbf{R}^{d}).
\end{equation*}
\end{lemma}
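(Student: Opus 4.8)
The plan is to reduce the inequality to the $L_p$-boundedness of a single convolution operator and then handle that operator by Calder\'on--Zygmund theory, producing an explicit kernel through the subordinated $\alpha$-stable semigroup. First I would record that $\mathcal{L}_\alpha$ is a Fourier multiplier: taking the Fourier transform in $x$ gives $\mathcal{F}(\mathcal{L}_\alpha u)(\xi)=\psi(\xi)\mathcal{F}u(\xi)$ with
$\psi(\xi)=\int(e^{i(\xi,y)}-1-i(\xi,y)\chi^{(\alpha)}(y))m(y)\,dy/|y|^{d+\alpha}$, while $\mathcal{F}(\partial^\alpha u)(\xi)=-|\xi|^\alpha\mathcal{F}u(\xi)$. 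Hence $\mathcal{L}_\alpha u=T(\partial^\alpha u)$, where $T$ is the multiplier with symbol $\Theta(\xi)=-\psi(\xi)/|\xi|^\alpha$, and it suffices to show $T$ is bounded on $L_p(\mathbf{R}^d)$ with norm $\le CK$.

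The first step is the pointwise symbol bound $|\psi(\xi)|\le CK|\xi|^\alpha$. I would prove it by splitting the defining integral at $|y|=1/|\xi|$: on $\{|y|\le 1/|\xi|\}$ use $|e^{i(\xi,y)}-1|\le|\xi||y|$ when $\alpha\in(0,1)$ and $|e^{i(\xi,y)}-1-i(\xi,y)|\le\frac12|\xi|^2|y|^2$ when $\alpha\in[1,2)$, while on $\{|y|>1/|\xi|\}$ use the crude bounds $|e^{i(\xi,y)}-1|\le2$ and $|(\xi,y)|\le|\xi||y|$; in every case the resulting integral is $\le CK|\xi|^\alpha$. For $\alpha=1$ the cancellation hypothesis $\int_{r\le|y|\le R}ym(y)\,dy/|y|^{d+1}=0$ is exactly what allows me to replace the fixed cutoff $\chi^{(1)}=1_{|y|\le1}$ by $1_{|y|\le1/|\xi|}$ without changing $\psi$, after which the same splitting applies. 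In particular $|\Theta|\le CK$, so by Plancherel $T$ is bounded on $L_2$ with norm $\le CK$.

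For $p\neq2$ the symbol $\Theta$ is in general not smooth: since $m$ is only bounded, the $\xi$-derivatives of $\psi$ needed for a Mikhlin--H\"ormander condition are not even absolutely convergent, the order-$|\gamma|$ derivative being $\int(iy)^\gamma e^{i(\xi,y)}m(y)\,dy/|y|^{d+\alpha}$, which diverges at infinity as soon as $|\gamma|\ge\alpha$. I would therefore not work with $\Theta$ directly but realize $T$ as a convolution operator. Let $p_s$ be the isotropic $\alpha$-stable density, $\mathcal{F}p_s(\xi)=e^{-s|\xi|^\alpha}$, so that $P_sg=p_s*g$ is the semigroup generated by $\partial^\alpha$ (cf. Remark~\ref{re2}). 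From $|\xi|^{-\alpha}=\int_0^\infty e^{-s|\xi|^\alpha}\,ds$ one gets $(\partial^\alpha)^{-1}g=-\int_0^\infty p_s*g\,ds$, and hence, on the dense class of smooth inputs,
\[
\mathcal{L}_\alpha u=\mathcal{L}_\alpha(\partial^\alpha)^{-1}(\partial^\alpha u)=k*\partial^\alpha u,\qquad k=-\int_0^\infty \mathcal{L}_\alpha p_s\,ds .
\]
Because $p_s$ is smooth and rapidly decreasing, $\mathcal{L}_\alpha p_s$ is a genuine (not merely principal-value) integral for each $s$, and the roughness of $m$ becomes harmless: it enters only through $|m|\le K$ once absolute values are taken inside the $y$-integral.

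The main work, and the step I expect to be the real obstacle, is the verification of the Calder\'on--Zygmund kernel bounds $|k(x)|\le CK|x|^{-d}$ and $|\nabla k(x)|\le CK|x|^{-d-1}$. For this I would use the self-similar bounds on the stable density, $p_s(x)=s^{-d/\alpha}p_1(xs^{-1/\alpha})$ and $|D^\gamma p_1(z)|\le C_\gamma(1+|z|)^{-(d+\alpha+|\gamma|)}$, to estimate $\mathcal{L}_\alpha p_s(x)$ and $\nabla_x\mathcal{L}_\alpha p_s(x)$ by splitting the $y$-integral: for small $|y|$ a second-order Taylor expansion bounds $\nabla_y^\alpha p_s(x)$ by $|y|^2\sup_{|\theta|\le|y|}|D^2p_s(x+\theta)|$, and for large $|y|$ the three terms of $\nabla_y^\alpha p_s$ are bounded separately. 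Integrating in $s$, with the change of variables $s\mapsto s|x|^{-\alpha}$ to exploit the scaling, then yields the stated homogeneous decay, uniformly in the direction of $x$ and with constant proportional to $K$; for $\alpha=1$ the cancellation hypothesis is needed once more so that the first-order term in $\nabla_y^1 p_s$ is integrable and the drift is controlled. Once these two kernel estimates are established, the classical Calder\'on--Zygmund theorem together with the $L_2$ bound gives boundedness of $T$ on $L_p$ for $1<p\le2$, and duality (the adjoint has kernel $\bar k(-\cdot)$, of the same type) extends it to $2\le p<\infty$, which is the assertion.
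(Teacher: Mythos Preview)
The paper itself does not prove this lemma; it is quoted from \cite{mikprag2} (with alternative references to \cite{babo0} for the symmetric case and \cite{kimdong} via H\"older estimates), so there is no in-paper argument to compare against. Your strategy---realize $\mathcal{L}_\alpha(\partial^\alpha)^{-1}$ as a convolution operator by the subordination formula $|\xi|^{-\alpha}=\int_0^\infty e^{-s|\xi|^\alpha}\,ds$, then run Calder\'on--Zygmund theory---is a legitimate route and, in spirit, close to the ``direct'' argument the paper attributes to \cite{mikprag2}. The symbol bound $|\psi(\xi)|\le CK|\xi|^\alpha$ and the $L_2$ step are fine, and the size estimate $|k(x)|\le CK|x|^{-d}$ does follow from $|\mathcal{L}_\alpha p_s(x)|\le K s^{-d/\alpha-1}F(xs^{-1/\alpha})$ with $F(z)\le C(1+|z|)^{-(d+\alpha)}$, after the scaling substitution $s=|x|^\alpha/t$.

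There is one genuine technical gap. The pointwise gradient bound $|\nabla k(x)|\le CK|x|^{-d-1}$ need not follow from your scheme when $\alpha\le 1$. Estimating $\nabla_x\mathcal{L}_\alpha p_s(x)=\mathcal{L}_\alpha(\nabla p_s)(x)$ by putting absolute values inside yields $K s^{-(d+1)/\alpha-1}F_1(xs^{-1/\alpha})$ with $F_1(z)\sim C|z|^{-(d+\alpha)}$ for large $|z|$ (the tail of $\nabla p_1$ near $w\approx -z$ contributes at this order and no better). After the change of variable this gives an integrand $\sim t^{1/\alpha-2}$ at $t\to\infty$, which diverges for $\alpha\le 1$; equivalently, $\int_0^\varepsilon|\nabla_x\mathcal{L}_\alpha p_s(x)|\,ds$ may blow up because $\|\nabla p_s\|_1\sim s^{-1/\alpha}$ and a merely bounded $m$ offers no cancellation. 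So you cannot, in general, justify differentiating under the $s$-integral and conclude $|\nabla k(x)|\le CK|x|^{-d-1}$.

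The fix is to aim directly for the H\"ormander condition rather than the pointwise gradient bound. Write
\[
\int_{|x|>2|y|}|k(x-y)-k(x)|\,dx\le \int_0^\infty\int_{|x|>2|y|}|\mathcal{L}_\alpha p_s(x-y)-\mathcal{L}_\alpha p_s(x)|\,dx\,ds
\]
and split at $s=|y|^\alpha$. For $s>|y|^\alpha$ use the mean-value estimate together with $\int_{\mathbf{R}^d}|\nabla_x\mathcal{L}_\alpha p_s|\,dx\le CK s^{-1-1/\alpha}$ (this $L^1$ bound holds since $F_1\in L^1$), giving $\int_{|y|^\alpha}^\infty CK|y|s^{-1-1/\alpha}\,ds=CK$. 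For $s<|y|^\alpha$ bound each term separately by the size estimate and use $\int_{|z|>2|y|s^{-1/\alpha}}F(z)\,dz\le C(|y|s^{-1/\alpha})^{-\alpha}$, so that $\int_{|x|>2|y|}|\mathcal{L}_\alpha p_s(x)|\,dx\le CK|y|^{-\alpha}$ uniformly in $s$, and integrating over $s\in(0,|y|^\alpha)$ gives another $CK$. This yields the H\"ormander bound with constant proportional to $K$, and together with your $L_2$ step the Calder\'on--Zygmund theorem finishes the proof for all $p\in(1,\infty)$ as you outlined.
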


\subsection{Solution for smooth input functions}

\begin{theorem}
\label{le3}Let $\alpha\in(0,2],\ p\geq 2$ and Assumption \emph{A}$_0$ be
satisfied. Let $u_{0}\in \mathfrak{D}_{p}(\mathbf{R}^{d})$ be $\mathcal{F}%
_{0}$-measurable, $f\in \mathfrak{D}_{p}(E)$, $\Phi \in \mathfrak{D}%
_{2,p}(E)\cap \mathfrak{D}_{p,p}(E)$ and $h\in \mathfrak{D}_{p}(E,Y)$.

Then there is a unique strong solution $u\in \mathfrak{D}_{p}(E)$ of \emph{(%
\ref{eq15})}. Moreover, $\mathbf{P}$-a.s. $u(t,x)$ is cadlag in $t$, smooth
in $x$ and the following assertions hold:

\emph{(i)} for each multiindex $\gamma \in \mathbf{N}_{0}^{d}$ and $(t,x)\in
E$ $\mathbf{P}$-a.s. 
\begin{eqnarray*}
\partial _{x}^{\gamma }u(t,x) &=&T_{t}^{\lambda }\partial ^{\gamma
}u_{0}(x)+R_{\lambda }\partial _{x}^{\gamma }f(t,x)+\widetilde{R}_{\lambda
}\partial _{x}^{\gamma }\Phi (t,x) \\
&&+\bar{R}_{\lambda }\partial _{x}^{\gamma }h(t,x);
\end{eqnarray*}

\emph{(ii)} for each multiindex $\gamma \in \mathbf{N}_{0}^{d}$ 
\begin{eqnarray*}
|\partial ^{\gamma }u|_{\mathbb{L}_{p}(E)} &\leq &C\biggl\{\rho _{\lambda
}^{1/p}|\partial ^{\gamma }u_{0}|_{\mathbb{L}_{p}(\mathbf{R}^{d})}+\rho
_{\lambda }|\partial ^{\gamma }f|_{\mathbb{L}_{p}(E)} \\
&&+\sum_{r=2,p}\rho _{\lambda }^{1/r}|\partial ^{\gamma }\Phi |_{\mathbb{L}%
_{r,p}(E)}+\rho _{\lambda }^{1/2}|\partial ^{\gamma }h|_{\mathbb{L}%
_{p}(E,Y)}^{p}\biggr\},
\end{eqnarray*}%
where $\rho _{\lambda }=T\wedge \frac{1}{\lambda }$ and the constant $%
C=C(\alpha ,p,d,|\gamma |,K,\delta )$;

\emph{(iii)} for each $\beta \in \mathbf{R}$ 
\begin{eqnarray*}
|u|_{\mathbb{H}_{p}^{\beta +\alpha }(E)} &\leq &C\Bigl\{|u_{0}|_{\mathbb{B}%
_{pp}^{\beta +\alpha -\frac{\alpha }{p}}(\mathbf{R}^{d})}+|f|_{\mathbb{H}%
_{p}^{\beta }(E)}+|\Phi |_{\mathbb{H}_{2,p}^{\beta +\frac{\alpha }{2}}(E)} \\
&&+|\Phi |_{\mathbb{B}_{p,pp}^{\beta +\alpha -\frac{\alpha }{p}}(E)}+|h|_{%
\mathbb{H}_{p}^{\beta +\alpha /2}(E,Y)}\Bigr\},
\end{eqnarray*}%
where the constant $C=C(\alpha ,\beta ,p,d,T,K,\delta )$.
\end{theorem}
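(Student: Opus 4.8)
The plan is to construct the solution explicitly through the Duhamel (variation-of-constants) representation
$$u(t,x)=T_{t}^{\lambda }u_{0}(x)+R_{\lambda }f(t,x)+\widetilde{R}_{\lambda }\Phi (t,x)+\bar{R}_{\lambda }h(t,x),$$
and then to show that this candidate is a strong solution, that it enjoys the stated regularity and estimates, and that it is the unique solution. Throughout I use that $G_{s,t}^{\lambda }$ is the fundamental solution of $\partial _{t}=A_{0}^{(\alpha )}-\lambda $: since $\mathcal{F}G_{s,t}=\exp (\int _{s}^{t}\psi _{0}^{(\alpha )})$ one has $\partial _{t}G_{s,t}^{\lambda }\ast \,\cdot =(A_{0}^{(\alpha )}-\lambda )G_{s,t}^{\lambda }\ast \,\cdot $, $\partial _{s}G_{s,t}^{\lambda }\ast \,\cdot =-(A_{0}^{(\alpha )}-\lambda )G_{s,t}^{\lambda }\ast \,\cdot $, and $G_{t,t}=\delta _{0}$, and that the convolution operator $G_{s,t}^{\lambda }\ast \,\cdot $ commutes with $A_{0}^{(\alpha )}$.

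First I would establish the regularity and the estimates, which follow from the groundwork already laid. Since $u_{0},f,\Phi ,h$ lie in the smooth classes $\mathfrak{D}_{p}$, Lemma 15 in \cite{MiPtoap} (smoothness in $x$ and cadlag property in $t$ of the stochastic convolution) together with the fact that $G_{s,t}^{\lambda }$ is a subprobability kernel (Remark \ref{re2}) shows that each of the four terms is smooth in $x$ and cadlag in $t$, and that $D_{x}^{\gamma }$ commutes with each of $T^{\lambda },R_{\lambda },\widetilde{R}_{\lambda },\bar{R}_{\lambda }$; this gives the representation (i). Applying Lemma \ref{le1} termwise to $\partial ^{\gamma }u_{0},\partial ^{\gamma }f,\partial ^{\gamma }\Phi ,\partial ^{\gamma }h$ yields (ii), and applying Lemma \ref{le2} with arbitrary $\beta $ and summing the four bounds yields (iii). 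Taking (iii) over all $\beta $ places $u$ in $\cap _{\kappa >0}\mathbb{H}_{p}^{\kappa }(E)$, while the sup-moment bounds from Lemma 15 in \cite{MiPtoap}, combined with the $\mathbb{L}_{p}$ control in (ii), give $\mathbf{E}\sup _{(t,x)\in E}|D_{x}^{\gamma }u|^{p}<\infty $; hence $u\in \mathfrak{D}_{p}(E)$.

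Next I would verify that $u$ solves \emph{(\ref{eq15})} in the sense of Definition \ref{def1}. Differentiating the deterministic terms in $t$ gives $d_{t}T_{t}^{\lambda }u_{0}=(A_{0}^{(\alpha )}-\lambda )T_{t}^{\lambda }u_{0}\,dt$ and $d_{t}R_{\lambda }f=[(A_{0}^{(\alpha )}-\lambda )R_{\lambda }f+f]\,dt$, the latter by the fundamental theorem of calculus applied to the upper limit plus differentiation under the integral sign. For the two stochastic convolutions I would use the stochastic Fubini theorem (Lemma 2 in \cite{MiPtoap}) to move $\langle \,\cdot \,,\varphi \rangle $ and $A_{0}^{(\alpha )}$ inside the $dW$- and $\eta $-integrals, and then differentiate in $t$, producing both the generator drift $(A_{0}^{(\alpha )}-\lambda )$ and the boundary contributions $h(t)\,dW_{t}$ and $\int _{U}\Phi (t,\cdot ,\upsilon )\eta (dt,d\upsilon )$ arising from the upper limit $s=t$. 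Summing the four differentials and pairing with $\varphi \in \mathcal{S}(\mathbf{R}^{d})$ reproduces exactly the weak formulation of \emph{(\ref{eq15})}, so $u$ is a strong solution.

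Finally, for uniqueness I would take any strong solution $u$, fix $t$, and apply the It\^{o} product rule to $s\mapsto G_{s,t}^{\lambda }\ast u(s)$ on $[0,t]$. Because $G_{s,t}^{\lambda }$ is continuous and of finite variation in $s$ there is no cross-variation term, and using $\partial _{s}G_{s,t}^{\lambda }\ast \,\cdot =-(A_{0}^{(\alpha )}-\lambda )G_{s,t}^{\lambda }\ast \,\cdot $ the two drift contributions $-(A_{0}^{(\alpha )}-\lambda )G_{s,t}^{\lambda }\ast u(s)$ and $G_{s,t}^{\lambda }\ast (A_{0}^{(\alpha )}-\lambda )u(s)$ cancel; integrating from $0$ to $t$ and using $G_{t,t}=\delta _{0}$ and $G_{0,t}^{\lambda }\ast u_{0}=T_{t}^{\lambda }u_{0}$ recovers precisely the Duhamel representation, so every solution coincides with the candidate. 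The main obstacle, I expect, is the verification step: rigorously justifying the interchange of the nonlocal, time-dependent operator $A_{0}^{(\alpha )}$ and the upper time limit with the $dW$- and $\eta $-integrals, i.e. proving $d_{t}\bar{R}_{\lambda }h=(A_{0}^{(\alpha )}-\lambda )\bar{R}_{\lambda }h\,dt+h\,dW_{t}$ and its analogue for $\widetilde{R}_{\lambda }\Phi $, since both the kernel $G_{s,t}^{\lambda }$ and the integration limit depend on the differentiation variable $t$; this is exactly where Lemmas 2 and 15 in \cite{MiPtoap} must be applied with care.
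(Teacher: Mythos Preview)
Your proposal is correct and follows essentially the same route as the paper: both arguments build the solution from the Duhamel representation $u=T^{\lambda}u_{0}+R_{\lambda}f+\widetilde{R}_{\lambda}\Phi+\bar{R}_{\lambda}h$, obtain the estimates (ii) and (iii) directly from Lemmas~\ref{le1} and~\ref{le2}, and rely on the auxiliary results in \cite{MiPtoap} and \cite{MiP09} for the regularity and well-definedness of the stochastic convolutions. The only difference is one of packaging: the paper cites Lemma~8 in \cite{MiPtoap} and Lemma~7 in \cite{MiP09} as black boxes for both the verification that the candidate solves \eqref{eq15} and for uniqueness, whereas you spell out the verification via stochastic Fubini and differentiation, and give a direct semigroup argument (It\^{o}'s rule applied to $s\mapsto G_{s,t}^{\lambda}\ast u(s)$) for uniqueness. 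One small point you should add explicitly is the bound $|A_{0}^{(\alpha)}v|_{\mathbb{H}_{p}^{\beta}(E)}\leq C|v|_{\mathbb{H}_{p}^{\beta+\alpha}(E)}$ (Lemma~3.2 in \cite{MiP922}), which the paper invokes to confirm that $A_{0}^{(\alpha)}u\in\mathbb{H}_{p}^{\beta}(E)$ as required by Definition~\ref{def1}.
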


\begin{proof}
We follow the arguments in \cite{MiPtoap}, \cite{MiP09}. Denote by $%
C_{p}^{\infty }(E)$ the set of all $\mathcal{R}(\mathbb{F})\otimes \mathcal{B%
}(\mathbf{R}^{d})$-measurable random functions $v(t,x)$ on $E$ such that $%
\mathbf{P}$-a.s. for all $t\in \lbrack 0,T]$ $u(t,x)$ is infinitely
differentiable in $x$ and for every multiindex $\gamma \in \mathbf{N}%
_{0}^{d} $ 
\begin{equation*}
\sup_{(t,x)\in E}\mathbf{E}|D_{x}^{\gamma }v(t,x)|^{p}<\infty .
\end{equation*}%
According to the definition of $\mathfrak{D}_{p}(E)$, we have $\mathfrak{D}%
_{p}(E)\subset C_{p}^{\infty }(E)$.

Let $u_{0}=0$. Since for every multiindex $\gamma \in \mathbf{N}_{0}^{d}$ 
\begin{eqnarray*}
\sup_{(t,x)\in E}\mathbf{E}\biggl\{|D_{x}^{\gamma }f(t,x)|^{p}
&+&\sum_{r=2,p}\biggl(\int_{U}\bigl\vert D_{x}^{\gamma }\Phi (t,x,\upsilon )%
\bigr\vert^{r}\Pi (d\upsilon )\biggr)^{\frac{p}{r}} \\
&+&\bigl\vert D_{x}^{\gamma }h(t,x)\bigr\vert_{Y}^{p}\biggr\}<\infty ,
\end{eqnarray*}%
by Lemma 8 in \cite{MiPtoap} and Lemma 7 in \cite{MiP09} there is a unique $%
u\in C_{p}^{\infty }(E)$ solving (\ref{eq15}), $u(t,x)$ is cadlag in $t$ and
the assertion (i) holds with $\gamma =0$. Moreover (see equation~(20) in the
proof of Lemma 8 in \cite{MiPtoap} and the proof of Lemma 7 in \cite{MiP09}%
), for every $\gamma \in \mathbf{N}_{0}^{d}$ and $(t,x)\in E$ we have $%
\mathbf{P}$-a.s.%
\begin{eqnarray*}
D_{x}^{\gamma }u(t,x) &=&\int_{0}^{t}\bigl[A_{0}^{(\alpha )}D_{x}^{\gamma
}u-\lambda D_{x}^{\gamma }u+D_{x}^{\gamma }f\bigr](s,x)ds \\
&&+\int_{0}^{t}\int_{U}D_{x}^{\gamma }\Phi (s,x,\upsilon )\eta (ds,d\upsilon
)+\int_{0}^{t}D_{x}^{\gamma }h(s,x)dW_{s}\,.
\end{eqnarray*}

Applying Lemma 8 in \cite{MiPtoap} and Lemma 7 in \cite{MiP09} again, we get
the assertion (i) for arbitrary $\gamma \in \mathbf{N}_{0}^{d}$. The
estimates (ii) and (iii) follow by Lemmas \ref{le1} and \ref{le2}. The
assertion (iii) and embedding theorem imply that $u\in \mathfrak{D}_{p}(E)$.
Using Lemma 3.2 in \cite{MiP922}, we get that there is a constant $C$ such
that for every $\upsilon \in \mathbb{H}_{p}^{\beta +\alpha }(E)$ 
\begin{equation*}
|A_{0}^{(\alpha )}\upsilon |_{\mathbb{H}_{p}^{\beta }(E)}\leq C|\upsilon |_{%
\mathbb{H}_{p}^{\beta +\alpha }(E)}.
\end{equation*}%
Hence, $u\in \mathfrak{D}_{p}(E)$ is a unique strong solution of (\ref{eq15}%
).

The case $u_{0}\neq 0$ is considered as above repeating the proof of Lemma 8
in \cite{MiPtoap} with obvious changes. The theorem is proved.
\end{proof}


\section{Model problem. Partial case II\label{sei}}

In this section, we consider the following partial case of equation (\ref%
{intr1}):%
\begin{eqnarray}
du(t,x) &=&\big(A^{(\alpha )}u-\lambda u+f\big)(t,x)dt+\int_{U}\Phi
(t,x,\upsilon )\eta (dt,d\upsilon )  \label{pr3} \\
&&+\int_{\mathbf{R}_{0}^{d}}g(t,x,y)q^{(\alpha )}(dt,dy)1_{\alpha \in
(0,2)}+h(t,x)dW_{t},  \notag \\
u(0,x) &=&u_{0}(x).  \notag
\end{eqnarray}%
We prove Theorem~\ref{main1} which is a partial case of the following
statement.

\begin{theorem}
\label{thm15}Let $\alpha \in (0,2],\ \beta \in \mathbf{R},\ p\geq 2$ and
Assumptions~\emph{A(i)-(ii)} be satisfied with $\sigma ^{i}=0,i=1,\ldots ,d$%
. Assume $\mathbf{P}$-a.s. for all $t\in \lbrack 0,T]$ and $y\in \mathbf{R}%
_{0}^{d}$ 
\begin{eqnarray*}
&&m^{(\alpha )}(t,y)\geq m_{0}^{(\alpha )}(t,y)\mbox{\quad if }\alpha \in
(0,2), \\
&&\big(B^{ij}(t)\big)y_{i}y_{j}\geq \delta |y|^{2}\mbox{\quad if }\alpha =2,
\end{eqnarray*}%
where the functions $m_{0}^{(\alpha )}$ satisfy Assumption A$_{0}$. Let $%
u_{0}\in \mathbb{B}_{pp}^{\beta +\alpha -\frac{\alpha }{p}}(\mathbf{R}^{d})$
be $\mathcal{F}_{0}$-measurable, $f\in \mathbb{H}_{p}^{\beta }(E),\ \Phi \in 
\mathbb{B}_{p,pp}^{\beta +\alpha -\frac{\alpha }{p}}(E)\cap \mathbb{H}%
_{2,p}^{\beta +\frac{\alpha }{2}}(E)$,\ $g\in \bar{\mathbb{B}}_{p,pp}^{\beta
+\alpha -\frac{\alpha }{p}}(E)\cap \bar{\mathbb{H}}_{2,p}^{\beta +\frac{%
\alpha }{2}}(E)$ and $h\in \mathbb{H}_{p}^{\beta +\alpha /2}(E,Y)$.

Then there is a unique strong solution $u\in \mathbb{H}_{p}^{\beta +\alpha
}(E)$ of \emph{(\ref{pr3})}. Moreover, there is a constant $C=C(\alpha
,\beta ,p,d,T,K,\delta )$ such that 
\begin{eqnarray}
|u|_{\mathbb{H}_{p}^{\beta +\alpha }(E)} &\leq &C\Bigl(|u_{0}|_{\mathbb{B}%
_{pp}^{\beta +\alpha -\frac{\alpha }{p}}(E)}+|f|_{\mathbb{H}_{p}^{\beta
}(E)}+|\Phi |_{{\mathbb{H}}_{2,p}^{\beta +\frac{\alpha }{2}}(E)}+|\Phi |_{{%
\mathbb{B}}_{p,pp}^{\beta +\alpha -\frac{\alpha }{p}}(E)}  \label{eq20} \\
&&+\big(|g|_{\bar{\mathbb{H}}_{2,p}^{\beta +\frac{\alpha }{2}}(E)}+|g|_{\bar{%
\mathbb{B}}_{p,pp}^{\beta +\alpha -\frac{\alpha }{p}}(E)}\big)+|h|_{\mathbb{H%
}_{p}^{\beta +\alpha /2}(E,Y)}\Bigr).  \notag
\end{eqnarray}
\end{theorem}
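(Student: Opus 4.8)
The plan is to deduce (\ref{pr3}) from the smooth non-random model problem of Section~4 by peeling off, in turn, the non-smoothness of the input functions, the extra jump forcing $\int g\,q^{(\alpha )}$, and finally the randomness and mere measurability of $m^{(\alpha )}$. First I would invoke the density Lemma~\ref{lemd}: it suffices to prove the estimate (\ref{eq20}) and to construct a solution when $u_{0}\in \mathfrak{D}_{p}(\mathbf{R}^{d})$, $f\in \mathfrak{D}_{p}(E)$, $\Phi \in \mathfrak{D}_{2,p}(E)\cap \mathfrak{D}_{p,p}(E)$, $g\in \bar{\mathfrak{D}}_{2,p}(E)\cap \bar{\mathfrak{D}}_{p,p}(E)$ and $h\in \mathfrak{D}_{p}(E,Y)$. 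For such data the estimate passes to general inputs along approximating sequences, the limiting process is identified as a strong solution in the sense of Definition~\ref{def1} via Lemma~\ref{ler1} and Corollary~\ref{corr1}, and uniqueness reduces to the a priori estimate applied to the difference of two solutions, whose data vanish.

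For smooth inputs I would represent a candidate solution through the model Green's function $G_{s,t}^{\lambda }$ of Section~4 by the variation-of-constants formula. Writing $A^{(\alpha )}=A_{0}^{(\alpha )}+\mathcal{B}$, where $A_{0}^{(\alpha )}$ carries the smooth non-random $m_{0}^{(\alpha )}$ of Assumption~A$_{0}$ and $\mathcal{B}$ carries the bounded non-negative coefficient $m^{(\alpha )}-m_{0}^{(\alpha )}$, a solution must satisfy
\begin{equation*}
u=T^{\lambda }u_{0}+R_{\lambda }(f+\mathcal{B}u)+\widetilde{R}_{\lambda }\Phi +\widehat{R}_{\lambda }g+\bar{R}_{\lambda }h,
\end{equation*}
where
\begin{equation*}
\widehat{R}_{\lambda }g(t,x)=\int_{0}^{t}\int_{\mathbf{R}_{0}^{d}}G_{s,t}^{\lambda }\ast g(s,\cdot ,y)\,q^{(\alpha )}(ds,dy)
\end{equation*}
is the new operator carrying the jump forcing. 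The data-driven terms $T^{\lambda }u_{0}$, $R_{\lambda }f$, $\widetilde{R}_{\lambda }\Phi $ and $\bar{R}_{\lambda }h$ are controlled in $\mathbb{H}_{p}^{\beta +\alpha }(E)$ by Lemma~\ref{le2}, and $\widehat{R}_{\lambda }g$ is estimated exactly as $\widetilde{R}_{\lambda }\Phi $: the martingale-measure bound (Corollary~2 in \cite{mikprag1}), now applied with the compensator $l^{(\alpha )}(t,y)\,dy/|y|^{d+\alpha }$, produces precisely the norms $|g|_{\bar{\mathbb{H}}_{2,p}^{\beta +\frac{\alpha }{2}}(E)}+|g|_{\bar{\mathbb{B}}_{p,pp}^{\beta +\alpha -\frac{\alpha }{p}}(E)}$ on the right of (\ref{eq20}). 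By Lemma~\ref{lem2}, $\mathcal{B}$ maps $\mathbb{H}_{p}^{\beta +\alpha }(E)$ into $\mathbb{H}_{p}^{\beta }(E)$ with norm at most $CK$, so the display above is a genuine fixed-point equation for $u$ in $\mathbb{H}_{p}^{\beta +\alpha }(E)$.

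The hard part is that $\mathcal{B}$ has the same order $\alpha $ as $A_{0}^{(\alpha )}$, so $R_{\lambda }\mathcal{B}$ does not become a contraction merely by enlarging $\lambda $: the top-order estimate of Lemma~\ref{le2} carries no decaying factor $\rho _{\lambda }$, unlike the $\mathbb{L}_{p}$-bounds of Lemma~\ref{le1}. Moreover Lemma~\ref{le2} itself rests, through the kernel bound (\ref{ff1}), on the smoothness of $m_{0}^{(\alpha )}$, which $m^{(\alpha )}$ does not share. I would therefore pass from $A_{0}^{(\alpha )}$ to $A^{(\alpha )}$ by the method of continuity along $m_{\tau }=m_{0}^{(\alpha )}+\tau (m^{(\alpha )}-m_{0}^{(\alpha )})$, $\tau \in [0,1]$. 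Since $m^{(\alpha )}\geq m_{0}^{(\alpha )}$, every $m_{\tau }$ satisfies $m_{\tau }\geq m_{0}^{(\alpha )}$, so the non-degeneracy constant $\delta $ and the bound $K$—hence the superparabolicity—are preserved uniformly in $\tau $. The technical heart is then the $\tau $-independent a priori estimate $|u|_{\mathbb{H}_{p}^{\beta +\alpha }(E)}\leq C(\text{data})$; here I would exploit that for coefficients independent of the space variable the symbol's real part satisfies $\mathrm{Re}\,\psi ^{(\alpha )}\leq \mathrm{Re}\,\psi _{0}^{(\alpha )}$ (because $m_{\tau }\geq m_{0}^{(\alpha )}$), so that the Lévy-type kernels $G_{s,t}^{\lambda }$ remain non-negative of unit mass and the robust $\mathbb{L}_{p}$-bounds of Lemma~\ref{le1} survive for merely measurable $m_{\tau }$; these, combined with Lemma~\ref{lem2} and the smoothing of Lemma~\ref{le2} at the smooth endpoint, are what must be pushed to close the estimate uniformly in $\tau $.

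Solvability at $\tau =0$ is Theorem~\ref{le3}; with the uniform a priori estimate in hand, the method of continuity advances solvability in finitely many steps of fixed length to $\tau =1$, producing a smooth solution of (\ref{pr3}) obeying (\ref{eq20}). The density reduction of the first paragraph then removes the smoothness of the inputs, and Theorem~\ref{main1} is recovered as the specialization $g=0$, $l^{(\alpha )}=0$ of (\ref{pr3})--(\ref{eq20}).
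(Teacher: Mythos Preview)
Your reduction to smooth inputs via Lemma~\ref{lemd} and your treatment of the jump forcing $\int g\,q^{(\alpha)}$ as a second $\widetilde{R}_\lambda$-type term are both in line with the paper. Where you diverge sharply is in passing from $A_0^{(\alpha)}$ to $A^{(\alpha)}$. The paper does \emph{not} use the method of continuity. Instead (Lemma~\ref{le16}, Step~$2^0$) it enlarges the probability space by an independent Poisson measure $\bar p$ and Wiener process $\overline W$, builds an auxiliary L\'evy-type process $Y_t^{(\alpha)}$ whose jump intensity is exactly $(m^{(\alpha)}-m_0^{(\alpha)})\,dy/|y|^{d+\alpha}$, solves the \emph{smooth} model problem (\ref{eq22}) with operator $A_0^{(\alpha)}$ and randomly shifted data $f(t,x-Y_t^{(\alpha)})$, $\Phi(t,x-Y_{t-}^{(\alpha)},\upsilon)$, etc., applies the Ito--Wentzell formula to $w(t,x+Y_t^{(\alpha)})$, and then sets $u(t,x)=\overline{\mathbf E}\,w(t,x+Y_t^{(\alpha)})$. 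Because all the norms in (\ref{eq20}) are translation-invariant in $x$, the estimate for $w$ transfers to $u$ without loss; and because the extra randomness is averaged out, $u$ lives on the original filtration. The rough, random, merely measurable $m^{(\alpha)}$ never enters any Calder\'on--Zygmund-type kernel bound.

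Your proposal, by contrast, hinges on a $\tau$-uniform a~priori estimate $|u|_{\mathbb H_p^{\beta+\alpha}(E)}\le C(\text{data})$ for the interpolated operator with coefficient $m_\tau$, and this is precisely the step you have not supplied. The observations you list---$m_\tau\ge m_0^{(\alpha)}$, positivity and unit mass of the corresponding transition kernel, survival of the $\mathbb L_p$-bounds of Lemma~\ref{le1}---give only order-zero control; the top-order parabolic estimate of Lemma~\ref{le2} rests on the kernel decay (\ref{ff1}), which in turn needs the $d_0$ derivatives of $m_0^{(\alpha)}$ in Assumption~A$_0$ and is simply unavailable for measurable $m_\tau$. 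Rewriting the equation as $(\partial_t-A_0^{(\alpha)}+\lambda)u=f+\tau\mathcal Bu+\cdots$ and invoking Lemma~\ref{lem2} produces $|u|_{\mathbb H_p^{\beta+\alpha}}\le C(\text{data})+C K\tau\,|u|_{\mathbb H_p^{\beta+\alpha}}$, which cannot be absorbed for $\tau$ near $1$; no $\rho_\lambda$-factor appears because $\mathcal B$ is of full order~$\alpha$. So the continuity scheme stalls unless you independently prove the parabolic $L_p$-estimate for merely measurable symbols, which is a separate theorem not contained in this paper. The probabilistic random-shift argument is exactly the device the authors use to sidestep that missing estimate.
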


First, we consider (\ref{pr3}) for smooth in $x$ input functions $%
u_0,f,\Phi,g$ and $h$.

\begin{lemma}
\label{le16} Let $\alpha\in(0,2],\ \beta \in \mathbf{R},\ p\geq 2$ and
Assumption~\emph{A} be satisfied with $\sigma^{i}=0,\ i=1,\ldots ,d,$ and $%
l^{(\alpha )}=0$ in \emph{A(iii)}. Let $u_{0}\in \mathfrak{D}_{p}(\mathbf{R}%
^{d})$ be $\mathcal{F}_{0}$-measurable, $f\in \mathfrak{D}_{p}(E),\ \Phi \in 
\mathfrak{D}_{2,p}(E)\cap \mathfrak{D}_{p,p}(E)$,\ $g \in \bar{\mathfrak{D}}%
_{2,p}(E)\cap \bar{\mathfrak{D}}_{p,p}(E)$ and $h\in \mathfrak{D}_{p}(E,Y)$.

Then there is a unique strong solution $u\in \mathfrak{D}_{p}(E)$ of \emph{(%
\ref{pr3})}. Moreover, $\mathbf{P}$-a.s. $u(t,x)$ is cadlag in $t$, smooth
in $x$ and the following assertions hold:

\emph{(i)} for every multiindex $\gamma \in \mathbf{N}_{0}^{d}$ 
\begin{eqnarray}
|D^{\gamma }u|_{\mathbb{L}_{p}(E)} &\leq &C\biggl\{\rho _{\lambda
}^{1/p}|D^{\gamma }u_{0}|_{\mathbb{L}_{p}(\mathbf{R}^{d})}+\rho _{\lambda
}|D^{\gamma }f|_{\mathbb{L}_{p}(E)}  \label{eq21} \\
&&+\sum_{r=2,p}\rho _{\lambda }^{1/r}\bigl(|D^{\gamma }\Phi |_{\mathbb{L}%
_{r,p}(E)}+|D^{\gamma }g|_{\mathbb{\bar{L}}_{r,p}(E)}\bigr)  \notag \\
&&+\rho _{\lambda }^{1/2}|D^{\gamma }h|_{\mathbb{L}_{p}(E,Y)}\biggr\}. 
\notag
\end{eqnarray}%
where $\rho _{\lambda }=T\wedge \frac{1}{\lambda }$ and the constant $%
C=C(\alpha ,p,d,|\gamma |,K,\delta )$;

\emph{(ii)} the estimate \emph{(\ref{eq20})} holds for every $\beta\in%
\mathbf{R}$.
\end{lemma}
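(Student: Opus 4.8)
The plan is to follow the proof of Theorem~\ref{le3} in structure, replacing the fixed non-random generator $A_0^{(\alpha)}$ by the random $A^{(\alpha)}$, and to reduce every estimate for $A^{(\alpha)}$ back to the already-available estimates for $A_0^{(\alpha)}$ by factorising its fundamental solution. Note first that, since $l^{(\alpha)}=0$, the compensator of $p^{(\alpha)}$ vanishes, so $q^{(\alpha)}\equiv0$ and the $g$-term in (\ref{pr3}) disappears (its norms $|\cdot|_{\bar{\mathbb{L}}_{r,p}(E)}$ vanish as well); thus (\ref{pr3}) is the uncorrelated equation with generator $A^{(\alpha)}$. Existence, uniqueness, the cadlag/smoothness of $u$ and the differentiated equations are obtained exactly as in Theorem~\ref{le3}: the inputs are smooth in $x$ and the coefficients bounded, so the general existence lemmas (Lemma 8 in \cite{MiPtoap}, Lemma 7 in \cite{MiP09}) give a unique $u\in\mathfrak{D}_p(E)$, cadlag in $t$ and smooth in $x$; because $A^{(\alpha)}$ has $x$-independent coefficients, each $\partial_x^\gamma u$ solves the same equation with data $\partial_x^\gamma u_0,\partial_x^\gamma f,\partial_x^\gamma\Phi,\partial_x^\gamma h$, so it suffices to prove the two estimates for $\gamma=0$ and apply them to $\partial_x^\gamma u$.

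The key device is a convolution factorisation of the fundamental solution. The symbol difference $\psi^{(\alpha)}-\psi_0^{(\alpha)}$ is the symbol of a L\'evy-type operator whose L\'evy measure $(m^{(\alpha)}-m_0^{(\alpha)})\,dy/|y|^{d+\alpha}$ is nonnegative, by Assumption~A(iii) with $l^{(\alpha)}=0$ (for $\alpha=2$ the Gaussian part $\bigl(B^{ij}(t)-\delta\,\delta_{ij}\bigr)$ is nonnegative definite, by $B^{ij}y_iy_j\geq\delta|y|^2$); the drifts and cut-offs match and cancel in the difference, using the centring conditions A(ii), A$_0$(ii) when $\alpha=1$. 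Hence, pathwise in $\omega$, the fundamental solution $p_{s,t}$ of $\partial_t u=A^{(\alpha)}u$ factorises as $p_{s,t}=G_{s,t}\ast\mu_{s,t}$, where $\mu_{s,t}$ is a probability measure---the law of the independent ``extra-jump'' additive process generated by $A^{(\alpha)}-A_0^{(\alpha)}$. Since $\mu_{s,t}$ is a probability measure, convolution with it is an $L_1$-contraction that commutes with the projections $\varphi_j\ast$ and preserves $p_{s,t}\geq0$, $\int p_{s,t}=1$; consequently every kernel bound used in Lemmas~\ref{le1} and \ref{le2}---Remark~\ref{re2} and the decay estimate (\ref{ff1}), which becomes $|p_{0,t}\ast\widetilde\varphi_j|_1\leq|h_t^j|_1\leq Ce^{-c2^{j\alpha}t}\sum_{k\leq d_0}(2^{j\alpha}t)^k$---holds verbatim for $p_{s,t}$ with the same constants. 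This is exactly what bypasses the non-smoothness of $\psi^{(\alpha)}$: although $A^{(\alpha)}$ has a merely measurable (in $y$) kernel, its fundamental solution is dominated by that of the smooth model operator $A_0^{(\alpha)}$.

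With this in hand I would reproduce the representation argument of Theorem~\ref{le3}. By linearity write $u=u_1+u_2+u_3$, with data $(u_0,f)$, $\Phi$, $h$ respectively. The deterministic part $u_1$ solves, for each fixed $\omega$, the deterministic Cauchy problem $\partial_t u_1=A^{(\alpha)}u_1-\lambda u_1+f$, $u_1(0)=u_0$, so $u_1(t)=p_{0,t}^\lambda\ast u_0+\int_0^t p_{s,t}^\lambda\ast f(s)\,ds$, and Lemma~\ref{le1}(i)--(ii) and Lemma~\ref{le2}(i)--(ii) transfer through the factorisation. For the stochastic parts $u_2,u_3$ I would combine the same kernel bounds with the martingale-measure and stochastic-integral maximal inequalities already used for $A_0^{(\alpha)}$ (Doob's inequality, Corollary~2 in \cite{mikprag1}, Lemma~\ref{ler1} and Corollary~\ref{corr1}), reproducing the $\rho_\lambda^{1/r}$ and $\rho_\lambda^{1/2}$ gains. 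Summing the four contributions gives (i); estimate (ii) follows from the transferred Lemma~\ref{le2}(i)--(iv), and $u\in\mathfrak{D}_p(E)$ follows from (ii), the embedding theorem, and the smoothness already obtained. Uniqueness is immediate by applying the estimate to the difference of two solutions with vanishing data.

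The main obstacle is the stochastic part. Because $p_{s,t}$ depends on $m^{(\alpha)}(r,\cdot)$ for $r\in[s,t]$, the naive convolution representation $\int_0^t p_{s,t}^\lambda\ast\Phi(s)\,\eta(ds)$ is anticipating and is not a legitimate It\^o integral; the genuinely adapted representation (variation of constants fibrewise in $\xi$) carries the $t$-dependence outside the integral, keeping only the $\mathcal{F}_s$-measurable factor $\exp\!\bigl(-\int_0^s\psi^{(\alpha)}(r,\xi)\,dr\bigr)$ inside. Reconciling this adapted form with the factorisation $p=G\ast\mu$ so that the probability-measure contraction can still be exploited---and thereby recovering the half-regularity-cost bounds $|\Phi|_{\mathbb{H}_{2,p}^{\beta+\alpha/2}}$ and $|h|_{\mathbb{H}_p^{\beta+\alpha/2}}$---is the delicate step, and is precisely where the probabilistic maximal inequalities of \cite{mikprag1} carry the argument.
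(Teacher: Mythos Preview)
Your proposal rests on a misreading of the hypothesis. The phrase ``$l^{(\alpha)}=0$ in A(iii)'' means only that in the superparabolicity condition A(iii) one should use $m^{(\alpha)}-0\geq m_0^{(\alpha)}$; the function $l^{(\alpha)}$ itself---the compensator density of $p^{(\alpha)}$---remains the general bounded nonnegative function of A(i)--(ii). Hence $q^{(\alpha)}$ is not zero, the $g$\,-term in (\ref{pr3}) is genuinely present, and the norms $|D^\gamma g|_{\bar{\mathbb L}_{r,p}(E)}$ in (\ref{eq21}) are not vacuous. Dropping this term makes the argument incomplete even before one reaches the estimates.

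Setting that aside, the factorisation $p_{s,t}=G_{s,t}\ast\mu_{s,t}$ is the right intuition but, as you yourself note, it is anticipating: $m^{(\alpha)}$ is $\mathcal R(\mathbb F)$-measurable, so $p_{s,t}$ is not $\mathcal F_s$-measurable and the convolution representation of the stochastic pieces is not an It\^o integral. Your final paragraph does not resolve this; invoking the maximal inequalities of \cite{mikprag1} is not enough, because those are proved for the non-random kernel $G_{s,t}$ and there is no mechanism offered for pushing the random probability measure $\mu_{s,t}$ through an adapted stochastic integral. The paper avoids this difficulty by a different route: it first reduces (Step~$1^{0}$) the $g\,q^{(\alpha)}$ term to the $\Phi\,\eta$ framework via the representation (\ref{eq12}), thereby settling the case $A^{(\alpha)}=A_0^{(\alpha)}$ by Theorem~\ref{le3}; then (Step~$2^{0}$) it realises the ``extra'' generator $A^{(\alpha)}-A_0^{(\alpha)}$ as the generator of an auxiliary additive process $Y_t^{(\alpha)}$ built from \emph{independent} noise $(\bar p,\bar W)$ on a product space, solves the $A_0^{(\alpha)}$-equation (\ref{eq22}) with spatially shifted inputs, applies the It\^o--Wentzell formula to $w(t,x+Y_t^{(\alpha)})$, and finally takes $u(t,x)=\overline{\mathbf E}\,w(t,x+Y_t^{(\alpha)})$. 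Because $Y^{(\alpha)}$ is independent of the original filtration, this averaging is legitimate, and the shift-invariance of all the norms transfers the estimates from $w$ to $u$. Uniqueness is proved by a separate stochastic representation (It\^o's formula along a characteristic $Z_t^{(\alpha)}$), not by applying the estimate to a difference.
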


\begin{proof}
\emph{Existence.} 1$^{0}$. First, we consider the equation (\ref{pr3}) with $%
A^{(\alpha )}$ replaced by $A_{0}^{(\alpha )}$ (equivalently, $m^{(\alpha )}$
in the definition of $A^{(\alpha )}$ is replaced by $m_{0}^{(\alpha )}$) .

By Lemma 14.50 and Theorem 14.56 in \cite{Jac79}, there is a $\mathbf{R}^{d}$%
-valued $\mathcal{R}(\mathbb{F})\otimes \mathcal{B}(\mathbf{R}_{0})$%
-measurable random function $c^{(\alpha )}(t,z)$ on $[0,T]\times \mathbf{R}%
_{0}$ satisfying (\ref{eq12}) and a Poisson point measure $\tilde{p}(dt,dz)$
on $([0,\infty )\times \mathbf{R}_{0},\mathcal{B}([0,\infty ))\otimes 
\mathcal{B}(\mathbf{R}_{0}))$, possibly on an extended probability space$,$
such that 
\begin{equation*}
p^{(\alpha )}(dt,dy)=\int_{\mathbf{R}_{0}}1_{dy}(c^{(\alpha )}(t,z))\tilde{p}%
(dt,dz)
\end{equation*}%
and 
\begin{equation*}
q^{(\alpha )}(dt,dy)=\int_{\mathbf{R}_{0}}1_{dy}(c^{(\alpha )}(t,z))\tilde{q}%
(dt,dz),
\end{equation*}%
where $\tilde{q}(dt,dz)=\tilde{p}(dt,dz)-\frac{dzdt}{z^{2}}$. Hence, for
every $g\in \bar{\mathfrak{D}}_{2,p}(E)\cap \bar{\mathfrak{D}}_{p,p}(E)$, we
have 
\begin{equation*}
\int_{0}^{t}\int_{\mathbf{R}_{0}^{d}}g(s,x,y)q^{(\alpha
)}(ds,dy)=\int_{0}^{t}\int_{\mathbf{R}_{0}}\tilde{g}(s,x,z)\tilde{q}(ds,dz),
\end{equation*}%
where $\tilde{g}(s,x,z)=g(s,x,c^{(\alpha )}(s,z))$. Since the point measures 
$\tilde{p}$ and $\eta $ have no common jumps, the problem (\ref{pr3})
reduces to the case of a single point measure on $[0,\infty )\times V$,
where $V$ is the sum of $U$ and $\mathbf{R}_{0}.$ Therefore,Theorem \ref{le3}
applies and all the assertions of the Lemma follow in the case $A^{(\alpha
)}=A_{0}^{(\alpha )}$.

2$^{0}$. Let $(\overline{\Omega },\overline{\mathcal{F}},\overline{\mathbf{P}%
})$ be a complete probability space with a filtration of $\sigma $-algebras $%
\overline{\mathbb{F}}=(\overline{\mathcal{F}}_{t})$ satisfying the usual
conditions. Let $\bar{p}(dt,dz)$ be an $\overline{\mathbb{F}}$-adapted
Poisson measure on $([0,\infty )\times \mathbf{R}_{0},\mathcal{B}([0,\infty
))\otimes \mathcal{B}(\mathbf{R}_{0}))$ with the compensator $dtdz/z^{2}$
and $\overline{W}_{t}$ be an independent standard $\overline{\mathbb{F}}$%
-adapted Wiener process in $\mathbf{R}^{d}$.

We introduce the product of probability spaces 
\begin{equation*}
(\widetilde{\Omega},\widetilde{\mathcal{F}},\widetilde{\mathbf{P}}) =
(\Omega\times\overline{\Omega},\mathcal{F}\otimes\overline{\mathcal{F}},%
\mathbf{P}\times\overline{\mathbf{P}}).
\end{equation*}%
Let $\widetilde{\mathcal{F}}^{\prime }$ be the completion of $\widetilde{%
\mathcal{F}}$. Let $\widetilde{\mathbb{F}}^{\prime }=(\widetilde{\mathcal{F}}%
^{\prime }_t)$, $\widetilde{\mathbb{F}}^{\prime \prime }=(\widetilde{%
\mathcal{F}}^{\prime \prime }_t)$ and $\widetilde{\mathbb{F}}^{\prime \prime
\prime }=(\widetilde{\mathcal{F}}^{\prime \prime \prime }_t)$ be the usual
augmentations of $(\mathcal{F}_t\otimes\overline{\mathcal{F}}_t)$, $(%
\mathcal{F}\otimes\overline{\mathcal{F}}_t)$ and $(\mathcal{F}_t\otimes%
\overline{\mathcal{F}})$, respectively (see \cite{delmey}).

Obviously,%
\begin{equation*}
\bar{q}(dt,dz)=\bar{p}(dt,dz)-\frac{dzdt}{z^{2}}
\end{equation*}%
is an $(\mathbb{\widetilde{F}}^{\prime },\mathbf{\widetilde{P}})$- and $(%
\mathbb{\widetilde{F}}^{\prime \prime },\mathbf{\widetilde{P}})$-martingale
measure. Also, $q^{(\alpha )}(dt,dy)$ and $\eta (dt,d\upsilon )$ are $(%
\mathbb{\widetilde{F}}^{\prime },\mathbf{\widetilde{P}})$- and $(\mathbb{%
\widetilde{F}}^{\prime \prime \prime },\mathbf{\widetilde{P}}$)-martingale
measures.

By Lemma 14.50 in \cite{Jac79}, there is a $\mathcal{R}(\mathbb{F})\otimes 
\mathcal{B}(\mathbf{R}_{0})$-measurable $\mathbf{R}^{d}$-valued function $%
c_{0}^{(\alpha )}(t,z)$ such that%
\begin{equation*}
\lbrack m^{(\alpha )}(t,y)-m_{0}^{(\alpha )}(t,y)]\frac{dy}{|y|^{d+\alpha }}%
=\int_{\mathbf{R}_{0}}1_{dy}(c_{0}^{(\alpha )}(t,z))\frac{dz}{z^{2}}
\end{equation*}%
with $\alpha \in (0,2).$

Let $\sigma _{\delta }(t)$ be a symmetric square root of the matrix $%
B(t)-\delta I$. We introduce the $\widetilde{\mathbb{F}}^{\prime }$-adapted
processes $Y_{t}^{(\alpha )},\ t\in \lbrack 0,T]$, defined by 
\begin{eqnarray*}
Y_{t}^{(\alpha )} &=&\int_{0}^{t}\int_{\mathbf{R}_{0}}\chi ^{(\alpha
)}(c_{0}^{(\alpha )}(s,z))c_{0}^{(\alpha )}(s,z)\bar{q}(ds,dz) \\
&&+\int_{0}^{t}\int_{\mathbf{R}_{0}}[1-\chi ^{(\alpha
)}(c_{0}^{(a)}(s,z))]c_{0}^{(\alpha )}(s,z)\bar{p}(ds,dz)
\end{eqnarray*}%
for $\alpha \in (0,2)$ and 
\begin{equation*}
Y_{t}^{(2)}=\int_{0}^{t}\sigma _{\delta }(s)d\overline{W}_{s}.
\end{equation*}

Let us consider the problem%
\begin{eqnarray}
dw(t,x) &=&\big[A_{0}^{(\alpha )}w(t,x)-\lambda w(t,x)+f\big(%
t,x-Y_{t}^{(\alpha )}\big)\big]dt  \notag \\
&&+\int_{U}\Phi \big(t,x-Y_{t-}^{(\alpha )},\upsilon \big)\eta (dt,d\upsilon
)  \notag \\
&&+\int_{\mathbf{R}_{0}^{d}}g\big(t,x-Y_{t-}^{(\alpha )},y\big)q^{(\alpha
)}(dt,dy)1_{\alpha \in (0,2)}  \label{eq22} \\
&&+h\big(t,x-Y_{t}^{(\alpha )}\big)dW_{t}\,,  \notag \\
w(0,x) &=&u_{0}(x).  \notag
\end{eqnarray}%
%
%
%
%
%
%
%
%
%
%
%
%
%
%
%
%
%
%
%
%
%
%
%
%
%
%
%
%
%
%
%
%
Obviously, $f(t,x-Y_{t}^{(\alpha )})\in \mathfrak{D}_{p}(E),\Phi
(t,x-Y_{t}^{(\alpha )},\upsilon )\in \mathfrak{D}_{2,p}(E)\cap \mathfrak{D}%
_{p,p}(E),g(t,x-Y_{t}^{(\alpha )},y)\in \mathfrak{\bar{D}}_{2,p}(E)\cap 
\mathfrak{\bar{D}}_{p,p}(E)$ and $h(t,x-Y_{t}^{(\alpha )})\in \mathfrak{D}%
_{p}(E,Y)$, where the classes $\mathfrak{D}_{p}(E),\ \mathfrak{D}_{r,p}(E)$, 
$\overline{\mathfrak{D}}_{r,p}(E)$ and $\mathfrak{D}_{p}(E,Y)$ are defined
on the extended probability space $(\widetilde{\Omega },\widetilde{\mathcal{F%
}},\widetilde{\mathbf{P}})$ with the filtration $\widetilde{\mathbb{F}}%
^{\prime }$. According to the first part of the proof, there is a unique
strong solution $w\in \mathfrak{D}_{p}(E)$ of (\ref{eq22}). Moreover, $%
w(t,x) $ is cadlag in $t$, smooth in $x$ and possesses the properties (i)
and (ii) with all the norms defined on the extended probability space. Since
the norms entering the estimates (i) and (ii) are invariant with respect to
random shifts of the space variable $x\in \mathbf{R}^{d}$, we conclude that
the norms $|\partial ^{\gamma }w|_{\mathbb{L}_{p}(E)}$ and $|w|_{\mathbb{H}%
_{p}^{\beta +\alpha }(E)}$ defined on the extended probability space do not
exceed the right-hand sides of the estimates (i) and (ii) defined on the
original probability space.

Applying the Ito-Wentzel formula (see Proposition 1 of \cite{mik1} and note
that $Y_{s}^{(\alpha )}$ and $w(s,x)$ have no common jumps), we have%
\begin{eqnarray*}
w(t,x+Y_{t}^{(\alpha )}) &=&u_{0}(x)+\int_{0}^{t}\Bigl[A_{0}^{(\alpha )}w%
\big(s,x+Y_{s}^{(\alpha )}\big)-\lambda w\big(s,x+Y_{s}^{(\alpha )}\big) \\
&&+f(s,x)+\frac{1}{2}\big(B(s)-\delta I\big)^{ij}w_{x_{i}x_{j}}(s,x+Y_{s}^{(%
\alpha )})1_{\alpha =2}\Bigr ]ds \\
&&+\int_{0}^{t}\nabla w\big(s-,x+Y_{s-}^{(\alpha )}\big)dY_{s}^{(\alpha )} \\
&&+\sum_{s\leq t}\Bigl[w\big(s-,x+Y_{s}^{(\alpha )}\big)-w\big(%
s-,x+Y_{s-}^{(\alpha )}\big) \\
&&-\big(\nabla w(s-,x+Y_{s-}^{(\alpha )}),Y_{s}^{(\alpha )}-Y_{s-}^{(\alpha
)}\big)\Bigr ]1_{\alpha \in (0,2)} \\
&&+\int_{0}^{t}\int_{U}\Phi (s,x,\upsilon )\eta (ds,d\upsilon ) \\
&&+\int_{0}^{t}\int_{\mathbf{R}_{0}^{d}}g(s,x,y)q^{(\alpha
)}(ds,dy)1_{\alpha \in (0,2)}+\int_{0}^{t}h(s,x)dW_{s}\,.
\end{eqnarray*}%
Thus 
\begin{eqnarray}
&&w(t,x+Y_{t}^{(\alpha )})  \label{fo1} \\
&=&u_{0}(x)+\int_{0}^{t}\Bigl[A^{(\alpha )}w\big(s,x+Y_{s}^{(\alpha )}\big)%
-\lambda w\big(s,x+Y_{s}^{(\alpha )}\big)+f(s,x)\Bigr]ds  \notag \\
&&+\int_{0}^{t}\int_{\mathbf{R}_{0}}\Bigl[w\big(s-,x+Y_{s-}^{(\alpha
)}+c_{0}^{(\alpha )}(s,z)\big)-w\big(s-,x+Y_{s-}^{(\alpha )}\big)\Bigr]\bar{q%
}(ds,dz)1_{\alpha \in (0,2)}  \notag \\
&&\quad +\int_{0}^{t}\nabla w(s,x+Y_{s}^{(\alpha )})\sigma _{\delta }(s)d%
\overline{W}_{s}\,1_{\alpha =2}+\int_{0}^{t}\int_{U}\Phi (s,x,\upsilon )\eta
(ds,d\upsilon )  \notag \\
&&\quad +\int_{0}^{t}\int_{\mathbf{R}_{0}^{d}}g(s,x,y)q^{(\alpha
)}(ds,dy)1_{\alpha \in (0,2)}+\int_{0}^{t}h(s,x)dW_{s}.  \notag
\end{eqnarray}

Let 
\begin{equation*}
\widetilde{w}(t,x)=w\big(t,x+Y_{t}^{(\alpha )}\big),\quad u(t,x)=\overline{%
\mathbf{E}}\widetilde{w}(t,x),
\end{equation*}%
where for a measurable integrable function $F$ on $\tilde{\Omega}=\Omega
\times \bar{\Omega}$ we denote%
\begin{equation*}
\overline{\mathbf{E}}F=\int F(\omega ,\bar{\omega})\mathbf{\bar{P}}(d\bar{%
\omega}).
\end{equation*}%
Obviously, $u\in $ $\mathfrak{D}_{p}(E)$, and by H\"{o}lders inequality, 
\begin{equation*}
|\partial ^{\gamma }u|_{\mathbb{L}_{p}(E)}\leq |\partial ^{\gamma }%
\widetilde{w}|_{\mathbb{L}_{p}(E)},\quad |u|_{\mathbb{H}_{p}^{\beta +\alpha
}(E)}\leq |\widetilde{w}|_{\mathbb{H}_{p}^{\beta +\alpha }(E)},
\end{equation*}%
where the norms $|\partial ^{\gamma }\widetilde{w}|_{\mathbb{L}_{p}(E)}$ and 
$|\partial ^{\gamma }\widetilde{w}|_{\mathbb{H}_{p}^{\beta +\alpha }(E)}$
defined on the extended probability space coincide with the norms $|\partial
^{\gamma }{w}|_{\mathbb{L}_{p}(E)}$ and $|{w}|_{\mathbb{H}_{p}^{\beta
+\alpha }(E)}$ and do not exceed the right-hand sides of the estimates (i)
and (ii). Therefore, the function $u$ satisfies the estimates (i) and (ii).
Moreover, $u$ is cadlag in $t$, and smooth in $x$. Since a $\mathcal{R}(%
\mathbb{\widetilde{F}}^{\prime })$-measurable process is $\mathcal{R}(%
\mathbb{\widetilde{F}}^{\prime \prime })$- and $\mathcal{R}(\mathbb{%
\widetilde{F}}^{\prime \prime \prime })$-measurable as well, taking
expectation $\mathbf{\bar{E}}$ of both sides of (\ref{fo1}) and applying
Lemma \ref{lep1}, we see that $u$ satisfies (\ref{pr3}).

\medskip \emph{Uniqueness.} Let $u_{i}\in \mathfrak{D}_{p}(E)$, $i=1,2$, be
two strong solutions of (\ref{pr3}). Then $u=u_{1}-u_{2}$ is a strong
solution to the problem 
\begin{eqnarray}
du(t,x) &=&\big(A^{(\alpha )}u-\lambda u\big)(t,x)dt\mbox{\ \ in\ }E,  \notag
\\
u(0,x) &=&0\mbox{\ \ in\ }\mathbf{R}^{d}.  \label{eq230}
\end{eqnarray}%
Considering (\ref{eq230}) separately for every $\omega \in \Omega $, without
loss of generality we can assume that the coefficients $m^{(\alpha )},B,b$
of the operator $A^{(\alpha )}$ and the function $u$ are non-random.

We fix arbitrary $(t_0,x)\in E$ and introduce the processes $%
Z_t^{(\alpha)},\ t\in[0,t_0],\ \alpha\in(0,2]$, defined on some probability
space by 
\begin{eqnarray*}
Z_t^{(\alpha)}&=&\int_0^t\int_{\mathbf{R}_0^d}\chi^{(\alpha)}(y)yq_{%
\alpha}(ds,dy)+ \int_0^t\int_{\mathbf{R}_0^d}\big[1-\chi^{(\alpha)}(y)\big]%
yp_{\alpha}(ds,dy) \\
&&+\int_0^t b(s)ds\,1_{\alpha=1}
\end{eqnarray*}%
for $\alpha\in(0,2)$ and 
\begin{equation*}
Z_t^{(2)}=\int_0^t \hat{\sigma}(s)d\widehat{W}_s .
\end{equation*}
Here, $p_{\alpha}(dt,dy)$ is a Poisson point measure on $([0,t_0]\times%
\mathbf{R}_0^d,\mathcal{B}([0,t_0])\otimes\mathcal{B}(\mathbf{R}_0^d))$ with
the compensator $m^{(\alpha)}(t_0-t,y)dydt/|y|^{d+\alpha}$, 
\begin{equation*}
q_{\alpha}(dt,dy)=p_{\alpha}(dt,dy)-m^{(\alpha)}(t_0-t,y)\frac{dydt}{%
|y|^{d+\alpha}}
\end{equation*}
is a martingale measure, $\hat{\sigma}(t)$ is a symmetric square root of the
matrix $B(t)$ and $\widehat{W}_t$ is a standard Wiener process in $\mathbf{R}%
^d$. By Ito's formula, 
\begin{eqnarray*}
-u(t_0,x)&=& e^{-\lambda t_0} u\big(0,x+Z_{t_0}^{(\alpha)}\big)-u(t_0,x) \\
&=&\int_0^{t_0}e^{-\lambda t}\Bigl( -\frac{\partial u}{\partial t}%
+A^{(\alpha)}u-\lambda u \Bigr)\big( t_0-t,x+Z_t^{(\alpha)}\big)dt=0.
\end{eqnarray*}%
Since $(t_0,x)\in E$ was arbitrary, $u=0$ on $E$.

The lemma is proved.
\end{proof}

\begin{proof}[Proof of Theorem \protect\ref{thm15}]
\emph{Existence}. According to Lemma~\ref{lemd}, there is a sequence of
input functions $(u_{0n},f_{n},\Phi _{n},g_{n},h_{n})$, $n=1,2,\ldots ,$
such that $u_{0n}\in \mathfrak{D}_{p}(\mathbf{R}^{d})$, $f_{n}\in \mathfrak{D%
}_{p}(E)$, $\Phi _{n}\in \mathfrak{D}_{2,p}(E)\cap \mathfrak{D}_{p,p}(E)$, $%
g_{n}\in \overline{\mathfrak{D}}_{2,p}(E)\cap \overline{\mathfrak{D}}%
_{p,p}(E)$, $h_{n}\in \mathfrak{D}_{p}(E,Y)$ and 
\begin{eqnarray}
&&|u_{0}-u_{0n}|_{\mathbb{B}_{pp}^{\beta +\alpha -\frac{\alpha }{p}}(\mathbf{%
R}^{d})}+|f-f_{n}|_{\mathbb{H}_{p}^{\beta }(E)}+|\Phi -\Phi _{n}|_{\mathbb{H}%
_{2,p}^{\beta +\frac{\alpha }{2}}(E)}+|\Phi -\Phi _{n}|_{\mathbb{B}%
_{p,pp}^{\beta +\alpha -\frac{\alpha }{p}}(E)}  \notag \\
&&\quad +|g-g_{n}|_{\overline{\mathbb{H}}_{2,p}^{\beta +\frac{\alpha }{2}%
}(E)}+|g-g_{n}|_{\overline{\mathbb{B}}_{p,pp}^{\beta +\alpha -\frac{\alpha }{%
p}}(E)}+|h-h_{n}|_{{\mathbb{H}}_{p}^{\beta +\alpha /2}(E,Y)}\rightarrow 0
\label{eq24}
\end{eqnarray}%
as $n\rightarrow \infty $. By Lemma \ref{le16}, for every $n$ there is a
strong solution $u_{n}\in \mathfrak{D}_{p}(E)$ of (\ref{pr3}) with the input
functions $u_{0n},f_{n},\Phi _{n},g_{n},h_{n}$. Since (\ref{pr3}) is a
linear equation, using the estimate (ii) of Lemma~\ref{le16} we derive that $%
(u_{n})$ is a Cauchy sequence in $\mathbb{H}_{p}^{\beta +\alpha }(E)$.
Hence, there is a function $u\in \mathbb{H}_{p}^{\beta +\alpha }(E)$ such
that $|u_{n}-u|_{\mathbb{H}_{p}^{\beta +\alpha }(E)}\rightarrow 0$ as $%
n\rightarrow \infty $.

Passing to the limit in (\ref{eq20}) with $u,u_0,f,\Phi,g,h$ replaced by $%
u_n,u_{0n},f_n,\Phi_n,g_n,h_n$ and using (\ref{eq23}), we get the estimate (%
\ref{eq20}).

Passing to the limit in the equality (see Definition \ref{def1}) 
\begin{eqnarray*}
\big\langle J^{\beta }u_{n}(t,\cdot ),\varphi \big\rangle &=&\big\langle %
J^{\beta }u_{0},\varphi \big\rangle+\int_{0}^{t}\Bigl[\big\langle(A^{(\alpha
)}-\lambda )J^{\beta }u_{n}(s,\cdot ),\varphi \big\rangle+\big\langle %
J^{\beta }f(s,\cdot ),\varphi \big\rangle\Bigr]ds \\
&&+\int_{0}^{t}\int_{U}\big\langle J^{\beta }\Phi _{n}(s,\cdot ,\upsilon
),\varphi \big\rangle\eta (ds,d\upsilon ) \\
&&+\int_{0}^{t}\int_{\mathbf{R}_{0}^{d}}\big\langle J^{\beta }g_{n}(s-,\cdot
,y),\varphi \big\rangle q^{(\alpha )}(ds,dy)1_{\alpha \in (0,2)} \\
&&+\int_{0}^{t}\big\langle J^{\beta }h_{n}(s,\cdot ),\varphi \big\rangle %
dW_{s}\,,\quad \varphi \in \mathcal{S}(\mathbf{R}^{d}),
\end{eqnarray*}%
as $n\rightarrow \infty $ and using Lemma \ref{lem2}, we get that the
function $u$ is a strong solution of (\ref{pr3}).

\medskip \emph{Uniqueness}. Let $u\in \mathbb{H}_{p}^{\beta +\alpha }(E)$ be
a strong solution of (\ref{pr3}) with zero input functions $u_{0},f,\Phi ,g$
and $h$. Hence, for every $\varphi \in \mathcal{S}(\mathbf{R}^{d})$ and $%
t\in \lbrack 0,T]$ $\mathbf{P}$-a.s. 
\begin{equation}
\big\langle J^{\beta }u(t,\cdot ),\varphi \big\rangle=\int_{0}^{t}\big\langle%
(A^{(\alpha )}-\lambda )J^{\beta }u(s,\cdot ),\varphi \big\rangle ds
\label{eq25}
\end{equation}

Let $\zeta_{\varepsilon}= \zeta_{\varepsilon}(x)$, $x\in\mathbf{R}^d$, $%
\varepsilon\in(0,1)$, be the functions introduced in Section~3. Inserting $%
\varphi(\cdot)=\zeta_{\varepsilon}(x-{\cdot})$ into (\ref{eq25}), we get
that the function 
\begin{equation*}
\upsilon_{\varepsilon}(t,x)=J^{\beta}u(t,\cdot)\ast\zeta_{\varepsilon}(x)
\end{equation*}%
belongs to $\mathfrak{D}_p(E)$ and 
\begin{equation*}
\upsilon_{\varepsilon}(t,x)=\int_0^t \big(A^{(\alpha)}-\lambda\big) %
\upsilon_{\varepsilon}(s,x)ds.
\end{equation*}%
By Lemma \ref{le16}, $\upsilon_{\varepsilon}=0$ $\mathbf{P}$-a.s. in $E$ for
all $\varepsilon\in(0,1)$. Hence, for every $\varphi\in\mathcal{S}(\mathbf{R}%
^d)$ and $t\in[0,T]$ $\mathbf{P}$-a.s. 
\begin{equation*}
0=\big\langle \upsilon_{\varepsilon}(t,\cdot),\varphi\big\rangle = %
\big\langle J^{\beta}u(t,\cdot)\ast\zeta_{\varepsilon},\varphi\big\rangle %
\to \big\langle J^{\beta}u(t,\cdot),\varphi\big\rangle
\end{equation*}%
as $\varepsilon\to 0$.

The theorem is proved.
\end{proof}

\section{General model}

Finally let us consider the equation (\ref{intr1}). First we solve it for
the smooth input functions. For $g\in \mathfrak{\bar{D}}_{2,p}(E)\cap 
\mathfrak{\bar{D}}_{p,p}(E)$ let%
\begin{equation*}
\Lambda g(t,x,y)=g(t,x-y,y),(t,x)\in E,y\in \mathbf{R}_{0}^{d}.
\end{equation*}%
We define for $\varepsilon >0$%
\begin{equation*}
I_{\varepsilon }g(t,x)=1_{\alpha \in (0,2)}\int_{|y|>\varepsilon }[\Lambda
g(t,x,y)-g(t,x,y)]l^{(\alpha )}(t,y)\frac{dy}{|y|^{d+\alpha }},(t,x)\in E.
\end{equation*}%
If $g,\Lambda g\in \mathfrak{\bar{D}}_{2,p}(E)\cap \mathfrak{\bar{D}}%
_{p,p}(E)$, then for each $\varepsilon >0$ we have $I_{\varepsilon }g\in 
\mathfrak{\bar{D}}_{2,p}(E)\cap \mathfrak{\bar{D}}_{p,p}(E).$

\begin{proposition}
\label{prop2}Let $p\geq 2$ and Assumption A hold. Let 
\begin{eqnarray*}
f &\in &\mathfrak{D}_{p}(E),\Phi \in \mathfrak{D}_{2,p}(E)\cap \mathfrak{D}%
_{p,p}(E), \\
\Lambda g,g &\in &\mathfrak{\bar{D}}_{2,p}(E)\cap \mathfrak{\bar{D}}%
_{p,p}(E),u_{0}\in \mathfrak{D}_{p}(\mathbf{R}^{d})
\end{eqnarray*}%
($u_{0}$ is $\mathcal{F}_{0}$-measurable). Assume that there is $Ig\in 
\mathfrak{D}_{p}(E)$ such that for every $\kappa \in \emph{R}$ and
multiindex $\gamma \in \mathbf{N}_{0}^{d}$ 
\begin{equation*}
\left\vert I_{\varepsilon }g-Ig\right\vert _{\mathbb{H}_{p}^{\kappa }(E)}+%
\mathbf{E}[\sup_{(s,x)\in E}|D_{x}^{\gamma }I_{\varepsilon
}g(s,x)-D_{x}^{\gamma }Ig(s,x)|^{p}]\rightarrow 0
\end{equation*}%
as $\varepsilon \rightarrow 0$ (we denote%
\begin{equation*}
Ig(t,x)=1_{\alpha \in (0,2)}\int [g(t,x-y,y)-g(t,x,y)]l^{(\alpha )}(t,y)%
\frac{dy}{|y|^{d+\alpha }},(t,x)\in E.\text{)}
\end{equation*}

Then there is a unique $u\in \mathfrak{D}_{p}(E)$ solving (\ref{intr1}).
Moreover, $\mathbf{P}$-a.s. $u(t,x)$ is cadlag in $t$ and smooth in $x$, and
there is a constant $C$ independent of $u_{0},f,g,\Phi $ such that%
\begin{eqnarray}
|u|_{\mathbb{H}_{p}^{\beta +\alpha }(E)} &\leq &C[|u_{0}|_{\mathbb{B}%
_{p}^{\beta +\alpha -\frac{\alpha }{p}}(\mathbf{R}^{d})}+|f+Ig|_{\mathbb{H}%
_{p}^{\beta }(E)}+|\Phi |_{\mathbb{H}_{p}^{\beta +\frac{\alpha }{2}%
}(E)}+|\Phi |_{\mathbb{B}_{pp}^{\beta +\alpha -\frac{\alpha }{p}}(E)}
\label{form6} \\
&&+|\Lambda g|_{\mathbb{\bar{H}}_{p}^{\beta +\frac{\alpha }{2}}(E)}+|\Lambda
g|_{\mathbb{\bar{B}}_{pp}^{\beta +\alpha -\frac{\alpha }{p}}(E)}+|h|_{%
\mathbb{H}_{p}^{\beta +\alpha /2}(E,Y)}].  \notag
\end{eqnarray}
\end{proposition}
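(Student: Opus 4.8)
The plan is to reduce the full equation (\ref{intr1}) to an equation of the type already solved in Lemma~\ref{le16} (Partial case II, equation (\ref{pr3})) by a random change of the space variable, and then to transfer the solution and the estimate back. The only difference between (\ref{intr1}) and (\ref{pr3}) is the presence of the two \emph{correlated} terms $\int_{\mathbf R_0^d}[u(t-,x+y)-u(t-,x)]q^{(\alpha)}(dt,dy)$ and (for $\alpha=2$) $\sigma^i(t)\partial_i u\,dW_t$. Because these are driven by the \emph{original} measure $p^{(\alpha)}$ and the original process $W$, the shift must be built from those same noises, on the original probability space --- in contrast with the auxiliary extended-space shift $Y^{(\alpha)}$ used \emph{inside} Lemma~\ref{le16} to handle the non-correlated excess $m^{(\alpha)}-m_0^{(\alpha)}$. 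For $\alpha\in(0,2)$ I would put
\[
X_t=\int_0^t\int_{\mathbf R_0^d}y\,\chi^{(\alpha)}(y)\,q^{(\alpha)}(ds,dy)+\int_0^t\int_{\mathbf R_0^d}y\,[1-\chi^{(\alpha)}(y)]\,p^{(\alpha)}(ds,dy),
\]
so that the jumps of $X$ coincide with those of $p^{(\alpha)}$ and its jump compensator is $l^{(\alpha)}(t,y)\,dy/|y|^{d+\alpha}$; for $\alpha=2$ I would take $X_t^i=\int_0^t\sigma^i(s)\cdot dW_s$.

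First I would invoke Lemma~\ref{le16} to obtain the unique $v\in\mathfrak D_p(E)$ solving the equation (\ref{pr3}) in which $A^{(\alpha)}$ is replaced by the operator $A_1^{(\alpha)}$ with density $m^{(\alpha)}-l^{(\alpha)}$ (superparabolic since $m^{(\alpha)}-l^{(\alpha)}\ge m_0^{(\alpha)}$ by Assumption~A(iii), so that Lemma~\ref{le16} applies with ``$l^{(\alpha)}=0$ in A(iii)''), with free term $f-Ig$, jump input $\Lambda g$, and remaining inputs $\Phi,h,u_0$; each reduced input is moreover composed with the adapted random translation $x\mapsto x-X_t$, which preserves the classes $\mathfrak D_p(E)$, $\mathfrak D_{r,p}(E)$, $\bar{\mathfrak D}_{r,p}(E)$ and smoothness in $x$. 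I would then set $u(t,x)=v(t,x+X_t)$ and verify by the It\^o--Wentzel formula (Proposition~1 of \cite{mik1}, as in the proof of Lemma~\ref{le16}) that $u$ solves (\ref{intr1}). The mechanism is that the jumps of $X$ generate precisely the increment martingale $\int[u(t-,x+y)-u(t-,x)]q^{(\alpha)}(dt,dy)$, while the compensator of those jumps adds $\int\nabla_y^\alpha u\,l^{(\alpha)}\,dy/|y|^{d+\alpha}$ to the drift and so turns $A_1^{(\alpha)}$ back into $A^{(\alpha)}$; simultaneously the jump input $\Lambda g$, read along the shifted argument, produces exactly $g(t,x,y)$ as the $g$-jump, and the difference between the compensator carried by $v$ and the compensator required by (\ref{intr1}) is the correction $Ig$, which is why $f-Ig$ appears as the reduced free term. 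For $\alpha=2$ the It\^o correction $\tfrac12\sigma^i\cdot\sigma^j\partial_{ij}u$ reconstitutes $A^{(2)}$ from the reduced (nondegenerate, by A(iii)) operator, while the transport term reproduces $\sigma^i\partial_i u\,dW_t$.

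Granting the transformation, the estimate (\ref{form6}) is immediate: all norms entering the bound (\ref{eq20}) of Lemma~\ref{le16} are invariant under the adapted spatial translation by $X_t$, whence $|u|_{\mathbb H_p^{\beta+\alpha}(E)}=|v|_{\mathbb H_p^{\beta+\alpha}(E)}$ and the reduced inputs have the same norms as $f-Ig,\Lambda g,\Phi,h,u_0$; applying (\ref{eq20}) to $v$ gives (\ref{form6}). For uniqueness I would run the change of variables in reverse: if $u\in\mathfrak D_p(E)$ solves (\ref{intr1}), then, $u$ being smooth in $x$, the It\^o--Wentzel formula shows that $v(t,x)=u(t,x-X_t)$ solves the reduced (\ref{pr3})-type problem, whose solution is unique by Lemma~\ref{le16}; hence $u$ is unique.

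The delicate step --- and the one I expect to cost the most --- is the rigorous It\^o--Wentzel bookkeeping in the presence of the \emph{common} jumps of $X$ and of $v$, both governed by $p^{(\alpha)}$: one must cleanly separate the $X$-induced increment from $v$'s own jump and account for every compensator so that the density $m^{(\alpha)}-l^{(\alpha)}$, the map $\Lambda$, and the correction $Ig$ assemble exactly as above. Since the integral defining $Ig=\int(\Lambda g-g)l^{(\alpha)}\,dy/|y|^{d+\alpha}$ is only conditionally convergent, this is precisely where the hypothesis $I_\varepsilon g\to Ig$ in $\mathbb H_p^{\kappa}(E)$ and uniformly in $(s,x)$ is used: I would first carry out the computation with $I_\varepsilon g$ in place of $Ig$ (for which the compensator integral converges absolutely and the reduced inputs lie in $\mathfrak D_p(E)$), and then pass to the limit $\varepsilon\to0$ using that hypothesis together with the continuity of the solution map supplied by Lemma~\ref{le16}.
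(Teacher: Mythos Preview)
Your proposal is correct and follows essentially the same approach as the paper: define the shift $X_t$ (the paper's $Y_t^{(\alpha)}$) from the original noises $q^{(\alpha)},W$, solve the reduced uncorrelated problem (\ref{pr4}) for $w$ via Lemma~\ref{le16} with operator density $m^{(\alpha)}-l^{(\alpha)}$, free term $f-Ig$ and jump input $\Lambda g$ (all shifted by $-X_t$), then set $u(t,x)=w(t,x+X_t)$ and verify (\ref{intr1}) by the It\^o--Wentzel formula, handling the common-jump bookkeeping through an $\varepsilon$-truncation and the hypothesis $I_\varepsilon g\to Ig$; uniqueness by the reverse transformation. The only detail you gloss over is that for $\alpha=2$ the reduced free term in (\ref{pr4}) also picks up the cross-variation correction $-\sigma^i(t)\partial_i h\,dt$ coming from the covariation of $h\,dW$ with the shift $\int\sigma\,dW$, but this is routine.
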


\begin{proof}
Let%
\begin{eqnarray*}
Y_{t}^{(\alpha )} &=&1_{\alpha \in (0,2)}[\int_{0}^{t}\int \chi ^{(\alpha
)}(y)yq^{(\alpha )}(ds,dy)+\int_{0}^{t}\int (1-\chi ^{(\alpha
)}(y))yp(ds,dy)] \\
&&+1_{\alpha =2}\int_{0}^{t}\sigma (s)dW_{s}.
\end{eqnarray*}%
Consider the problem%
\begin{eqnarray}
dw(t,x) &=&\{\tilde{A}^{(\alpha )}w(t,x)-\lambda w(t,x)+f\left(
t,x-Y_{t}^{(\alpha )}\right) +Ig(t,x-Y_{t}^{(\alpha )})  \label{pr4} \\
&&+\{h(t,x-Y_{t}^{(\alpha )})dW_{t}-1_{\alpha =2}\partial
_{i}h(t,x-Y_{t}^{(\alpha )})\sigma ^{i}(t)dt\}  \notag \\
&&+\int g(t,x-y-Y_{t-}^{(\alpha )},y)q^{(\alpha )}(dt,dy)+\int \Phi
(t,x-Y_{t-}^{(\alpha )},\upsilon )\eta (dt,d\upsilon ),  \notag \\
w(0,x) &=&u_{0}(x),  \notag
\end{eqnarray}%
where $\tilde{A}^{(\alpha )}u$ is defined as $A^{(\alpha )}u$ in (\ref{nf1})
with $m^{(\alpha )}$ replaced by $m^{(\alpha )}-l^{(\alpha )}$ and $%
B^{ij}(t) $ replaced by $B^{ij}(t)-\frac{1}{2}\sigma ^{i}(t)\cdot \sigma
^{j}(t).$ Obviously, 
\begin{eqnarray*}
\Phi (t,x-Y_{t}^{(\alpha )},\upsilon ) &\in &\mathfrak{D}_{2,p}(E)\cap 
\mathfrak{D}_{p,p}(E),f(t,x-Y_{t}^{(\alpha )}), \\
Ig(t,x-Y_{t}^{(\alpha )}),\partial _{i}h^{i}(t,x-Y_{t}^{(\alpha )})\sigma
^{i}(t)1_{\alpha =2} &\in &\mathfrak{D}_{p}(E), \\
g(t,x-y-Y_{t}^{(\alpha )},y) &\in &\mathfrak{\bar{D}}_{2,p}(E)\cap \mathfrak{%
\bar{D}}_{p,p}(E),u_{0}\in \mathfrak{D}_{p}(\mathbf{R}^{d}),
\end{eqnarray*}%
and by Lemma \ref{le16} there is a unique solution $w\in \mathfrak{D}_{p}(E)$
of \emph{(}\ref{pr4}). Moreover, $\mathbf{P}$-a.s. $w(t,x)$ is cadlag in $t$%
, smooth in $x$ and the estimates (\ref{eq20}), (\ref{eq21}) hold. For $%
\varepsilon \in (0,1)$ set%
\begin{eqnarray*}
Y_{t}^{(\alpha ),\varepsilon } &=&1_{\alpha \in
(0,2)}[\int_{0}^{t}\int_{|y|>\varepsilon }\chi ^{(\alpha )}(y)yq^{(\alpha
)}(ds,dy)+\int_{0}^{t}\int (1-\chi ^{(\alpha )}(y))yp(ds,dy)] \\
&&+1_{\alpha =2}\int_{0}^{t}\sigma (s)dW_{s},\bar{Y}_{t}^{(\alpha
),\varepsilon }=1_{\alpha \in (0,2)}\int_{0}^{t}\int_{|y|\leq \varepsilon
}\chi ^{(\alpha )}(y)yq^{(\alpha )}(ds,dy),
\end{eqnarray*}%
$0\leq t\leq T.$ Applying Ito-Wentzel formula (see Proposition 1 of \cite%
{mik1}) we have%
\begin{eqnarray*}
&&w(t,x+Y_{t}^{(\alpha ),\varepsilon }) \\
&=&u_{0}(x)+\int_{0}^{t}\nabla w(s-,x+Y_{s-}^{(\alpha ),\varepsilon
})dY_{s}^{(\alpha ),\varepsilon }+\int_{0}^{t}\int \Phi (s,x,\upsilon )\eta
(ds,d\upsilon ) \\
&&+\sum_{s\leq t}[w\left( s-,x+Y_{s}^{(\alpha ),\varepsilon }\right)
-w(s-,x+Y_{s-}^{(\alpha ),\varepsilon })-\nabla w(s-,x+Y_{s-}^{(\alpha
),\varepsilon })\Delta Y^{(\alpha ),\varepsilon }] \\
&&+\int_{0}^{t}h(s,x)dW_{s}+\int_{0}^{t}\int g(s,x-y,y)q^{(\alpha )}(ds,dy)
\\
&&+\int_{0}^{t}\left( \tilde{A}^{(\alpha )}w(s,x+Y_{s}^{(\alpha
),\varepsilon })-\lambda w(s,x+Y_{s}^{(\alpha ),\varepsilon
})+f(s,x)+Ig(s,x)\right) ds \\
&&+\sum_{s\leq t}\left[ \Delta w(s,x+Y_{s}^{(\alpha ),\varepsilon })-\Delta
w(s,x+Y_{s-}^{(\alpha ),\varepsilon })\right] \\
&&+1_{\alpha =2}\frac{1}{2}\int_{0}^{t}\sigma ^{i}(s)\cdot \sigma
^{j}(s)\partial _{ij}^{2}w(s,x+Y_{s}^{(\alpha ),\varepsilon })ds,0\leq t\leq
T.
\end{eqnarray*}%
Since%
\begin{eqnarray*}
&&\sum_{s\leq t}\left[ \Delta w(s,x+Y_{s}^{(\alpha ),\varepsilon })-\Delta
w(s,x+Y_{s-}^{(\alpha ),\varepsilon })\right] \\
&=&\int_{0}^{t}\int_{|y|>\varepsilon }\left[ g(s,x,y)-g(s,x-y,y)\right]
p^{(\alpha )}(ds,dy) \\
&=&\int_{0}^{t}\int_{|y|>\varepsilon }\left[ g(s,x,y)-g(s,x-y,y)\right]
q^{(\alpha )}(ds,dy)-\int_{0}^{t}I_{\varepsilon }g(s,x)ds,
\end{eqnarray*}%
it follows (by passing to the limit as $\varepsilon \rightarrow 0$) that $%
u(t,x)=w(t,x+Y_{t}^{(\alpha )})$ satisfies (\ref{intr1}). By our assumptions
and Lemma \ref{le16} (the estimate (\ref{eq21}), 
\begin{equation*}
|\partial ^{\gamma }u|_{\mathbb{L}_{p}(E)}<\infty ,\gamma \in \mathbf{N}%
_{0}^{d}
\end{equation*}%
and (\ref{form6}) holds. Therefore $u$ is a solution of (\ref{intr1}). The
uniqueness follows from the fact that we can go backwards. Repeating the
arguments as above we find that if $u\in \mathfrak{D}_{p}(E)$ solves (\ref%
{intr1}) then $w(t,x)=u(t,x-Y_{t}^{(\alpha )})$ is the solution of the class 
$\mathfrak{D}_{p}(E)$ to (\ref{pr4}) for which the uniqueness holds.
\end{proof}

\begin{corollary}
\label{lu3}There is at most one solution $u\in \mathbb{H}_{p}^{\beta +\alpha
}(E)$ of (\ref{intr1}).
\end{corollary}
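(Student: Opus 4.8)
The plan is to reduce the $\mathbb{H}_{p}^{\beta +\alpha }(E)$-uniqueness to the $\mathfrak{D}_{p}(E)$-uniqueness already contained in Proposition \ref{prop2}, exactly as the uniqueness part of Theorem \ref{thm15} reduced to Lemma \ref{le16}. By linearity it suffices to show that a strong solution $u\in \mathbb{H}_{p}^{\beta +\alpha }(E)$ of (\ref{intr1}) with vanishing input functions $u_{0}=f=g=\Phi =h=0$ must vanish. Since $J^{\beta }$, $A^{(\alpha )}$, the gradient and the spatial translations $v\mapsto v(\cdot +y)$ are all translation-invariant Fourier multipliers in $x$, the field $J^{\beta }u$ is again a strong solution of the corresponding homogeneous equation; thus $\mathbf{P}$-a.s. for every $\varphi \in \mathcal{S}(\mathbf{R}^{d})$
\begin{eqnarray*}
\langle J^{\beta }u(t,\cdot ),\varphi \rangle &=&\int_{0}^{t}\langle A^{(\alpha )}J^{\beta }u(s,\cdot ),\varphi -\lambda \varphi \rangle ds \\
&&+\int_{0}^{t}\int_{\mathbf{R}_{0}^{d}}\langle J^{\beta }u(s-,\cdot +y)-J^{\beta }u(s-,\cdot ),\varphi \rangle q^{(\alpha )}(ds,dy)1_{\alpha \in (0,2)} \\
&&+\int_{0}^{t}\langle 1_{\alpha =2}\sigma ^{i}(s)\partial _{i}J^{\beta }u(s,\cdot ),\varphi \rangle dW_{s}.
\end{eqnarray*}

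Next I would mollify. Inserting $\varphi (\cdot )=\zeta _{\varepsilon }(x-\cdot )$, $\varepsilon \in (0,1)$, into this identity and moving the convolution inside the $ds$-, $q^{(\alpha )}$- and $dW$-integrals by the stochastic Fubini theorem (Lemma 2 in \cite{MiPtoap}, applied exactly as in Lemma \ref{ler1} and Corollary \ref{corr1}), I obtain that the smooth-in-$x$ field $\upsilon _{\varepsilon }(t,x)=J^{\beta }u(t,\cdot )\ast \zeta _{\varepsilon }(x)$ satisfies, for each $x$ and $\mathbf{P}$-a.s.,
\begin{eqnarray*}
\upsilon _{\varepsilon }(t,x) &=&\int_{0}^{t}(A^{(\alpha )}-\lambda )\upsilon _{\varepsilon }(s,x)ds \\
&&+\int_{0}^{t}\int_{\mathbf{R}_{0}^{d}}[\upsilon _{\varepsilon }(s-,x+y)-\upsilon _{\varepsilon }(s-,x)]q^{(\alpha )}(ds,dy)1_{\alpha \in (0,2)} \\
&&+\int_{0}^{t}1_{\alpha =2}\sigma ^{i}(s)\partial _{i}\upsilon _{\varepsilon }(s,x)dW_{s},
\end{eqnarray*}
which is precisely the homogeneous version of (\ref{intr1}) with zero input functions; here one uses that $A^{(\alpha )}$, the translations and $\partial _{i}$ commute with convolution by $\zeta _{\varepsilon }$ because all coefficients are independent of $x$. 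I would then check that $\upsilon _{\varepsilon }\in \mathfrak{D}_{p}(E)$: convolving the $L_{p}$-valued cadlag process $J^{\beta }u(t,\cdot )$ with $\zeta _{\varepsilon }\in C_{0}^{\infty }(\mathbf{R}^{d})$ yields arbitrarily many bounded spatial derivatives and membership in $\cap _{\kappa >0}\mathbb{H}_{p}^{\kappa }(E)$ by the estimates of Lemma \ref{cnew}, while the bound $\mathbf{E}\sup_{(t,x)\in E}|D_{x}^{\gamma }\upsilon _{\varepsilon }(t,x)|^{p}<\infty $ follows from $|D_{x}^{\gamma }\upsilon _{\varepsilon }(t,x)|\leq C_{\varepsilon }|u(t,\cdot )|_{H_{p}^{\beta }(\mathbf{R}^{d})}$ together with $\mathbf{E}\sup_{t\leq T}|u(t)|_{H_{p}^{\beta }}^{p}<\infty $, the latter being a consequence of the maximal inequalities for the drift, the jump martingale (Corollary \ref{corr1}) and the Wiener integral (Theorem 1 in \cite{mir3mol}, cf. Remark \ref{re}).

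Finally, since $\upsilon _{\varepsilon }\in \mathfrak{D}_{p}(E)$ solves (\ref{intr1}) with all input functions and $Ig$ equal to zero, the uniqueness statement of Proposition \ref{prop2} (whose hypotheses are trivially met for $g=0$, so $Ig=0\in \mathfrak{D}_{p}(E)$) forces $\upsilon _{\varepsilon }=0$ $\mathbf{P}$-a.s. in $E$ for every $\varepsilon \in (0,1)$. Letting $\varepsilon \to 0$ and using $\langle \upsilon _{\varepsilon }(t,\cdot ),\varphi \rangle =\langle J^{\beta }u(t,\cdot )\ast \zeta _{\varepsilon },\varphi \rangle \to \langle J^{\beta }u(t,\cdot ),\varphi \rangle $ for every $\varphi \in \mathcal{S}(\mathbf{R}^{d})$, I conclude $J^{\beta }u(t,\cdot )=0$, hence $u=0$ in $\mathbb{H}_{p}^{\beta +\alpha }(E)$. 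The one genuinely technical point, and the main obstacle, is the second step: rigorously justifying the stochastic Fubini interchange for the $u$-dependent martingale-measure term and verifying that the mollified field has the pathwise $\sup_{t}$ regularity required to lie in $\mathfrak{D}_{p}(E)$; everything else is a formal consequence of linearity and of the already-established uniqueness for smooth data.
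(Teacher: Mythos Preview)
Your proposal is correct and follows essentially the same approach as the paper: mollify $J^{\beta}u$ by $\zeta_{\varepsilon}$, observe that the resulting $\upsilon_{\varepsilon}\in\mathfrak{D}_{p}(E)$ solves the homogeneous version of (\ref{intr1}), invoke the uniqueness in Proposition~\ref{prop2}, and let $\varepsilon\to 0$. The paper's proof is a terse two-line version of exactly this argument; your discussion of the stochastic Fubini step and the verification of $\mathfrak{D}_{p}(E)$-membership just fills in details the paper leaves implicit.
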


\begin{proof}
Let $u\in \mathbb{H}_{p}^{\beta +\alpha }(E)$ be a solution to (\ref{intr1})
with zero input functions. Let $\zeta \in C_{0}^{\infty }(\mathbf{R}%
^{d}),\varepsilon >0,\zeta _{\varepsilon }(x)=\varepsilon ^{-d}\zeta
(x/\varepsilon )$ and applying (\ref{for3}) with $\zeta _{\varepsilon
}(x-\cdot )\in C_{0}^{\infty }$ we see that 
\begin{equation*}
u_{\varepsilon }(t,x)=\int u(t,y)\zeta _{\varepsilon }(x-y)dy
\end{equation*}%
belongs to $\mathfrak{D}_{p}(E)$ and solves (\ref{intr1})). Therefore, by
Proposition \ref{prop2} $u_{\varepsilon }(t,x)=0$ for all $\varepsilon >0$.
The statement follows.
\end{proof}

\subsection{Proof of Theorem \protect\ref{main 2}}

By Lemmas \ref{cnew2} and \ref{lemd} there are sequences 
\begin{eqnarray*}
f_{n} &\in &\mathfrak{D}_{p}(E),\Phi _{n}\in \mathfrak{D}_{2,p}(E)\cap 
\mathfrak{D}_{p,p}(E), \\
g_{n} &\in &\mathfrak{\bar{D}}_{2,p}(E)\cap \mathfrak{\bar{D}}%
_{p,p}(E),u_{0,n}\in \mathfrak{D}_{p}(\mathbf{R}^{d})
\end{eqnarray*}%
defined by (\ref{eq14}) such that 
\begin{eqnarray*}
&&|f_{n}-f|_{\mathbb{H}_{p}^{\beta }(E)}+|\Phi _{n}-\Phi |_{\mathbb{H}%
_{2,p}^{\beta +\frac{\alpha }{2}}(E)}+|\Phi _{n}-\Phi |_{\mathbb{B}%
_{p,pp}^{\beta +\alpha -\frac{\alpha }{p}}(E)}+|h_{n}-h|_{\mathbb{H}%
_{p}^{\beta +\alpha /2}(E,Y)} \\
&&+|g_{n}-g|_{\mathbb{\bar{H}}_{2,p}^{\beta +\frac{\alpha }{2}%
}(E)}+|g_{n}-g|_{\mathbb{\bar{B}}_{p,pp}^{\beta +\alpha -\frac{\alpha }{p}%
}(E)}+|u_{0,n}-u_{0}|_{\mathbb{B}_{pp}^{\beta +\alpha -\frac{\alpha }{p}}(%
\mathbf{R}^{d})} \\
&\rightarrow &0\text{ as }n\rightarrow \infty \text{.}
\end{eqnarray*}%
Since $\Lambda g\in \mathbb{\bar{B}}_{p,pp}^{\beta +\alpha -\frac{\alpha }{p}%
}(E)\cap \mathbb{\bar{H}}_{2,p}^{\beta +\frac{\alpha }{2}}(E)$ it follows by
the definition of the approximating sequence that $\Lambda g_{n}\in 
\mathfrak{\bar{D}}_{2,p}(E)\cap \mathfrak{\bar{D}}_{p,p}(E)$ and%
\begin{equation*}
|\Lambda g_{n}-\Lambda g|_{\mathbb{\bar{H}}_{2,p}^{\beta +\frac{\alpha }{2}%
}(E)}+|\Lambda g_{n}-\Lambda g|_{\mathbb{\bar{B}}_{p,pp}^{\beta +\alpha -%
\frac{\alpha }{p}}(E)}\rightarrow 0
\end{equation*}%
as $n\rightarrow \infty $ as well. Since $I_{\varepsilon }g\rightarrow Ig$
in $\mathbb{H}_{p}^{\beta }(E)$ as $\varepsilon \rightarrow 0$ we have for
each $n$ and $\kappa \in \mathbf{R}$ (see estimate of Lemma \ref{cnew2}) 
\begin{equation*}
\left( I_{\varepsilon }g\right) _{n}=I_{\varepsilon }g_{n}\rightarrow \left(
Ig\right) _{n}=Ig_{n}\text{ as }\varepsilon \rightarrow 0\text{ in }\mathbb{H%
}_{p}^{\kappa }(E)\text{,}
\end{equation*}%
where $\left( I_{\varepsilon }g\right) _{n}$ and $\left( Ig\right) _{n}$ are
approximations defined by (\ref{eq14}). In addition, by Lemma \ref{cnew2},%
\begin{equation*}
\left( Ig\right) _{n}=Ig_{n}\rightarrow Ig\text{ in }\mathbb{H}_{p}^{\beta
}(E)\text{ as }n\rightarrow \infty \text{,}
\end{equation*}%
and for each $n$ and multiindex $\gamma \in \mathbf{N}_{0}^{d}$%
\begin{equation*}
\mathbf{E}[\sup_{(s,x)\in E}|D_{x}^{\gamma }I_{\varepsilon
}g_{n}(s,x)-D_{x}^{\gamma }Ig_{n}(s,x)|^{p}]\rightarrow 0
\end{equation*}%
as $\varepsilon \rightarrow 0$. Therefore all the assumptions of Proposition %
\ref{prop2} are satisfied with smooth input functions $%
f_{n},g_{n},h_{n},u_{0,n},\Phi _{n}$. Let us denote $u_{n}$ the
corresponding smooth solution of the class $\mathfrak{D}_{p}(E)$. By
definition,%
\begin{eqnarray}
&&{u}_{n}(t)  \label{f00} \\
&=&u_{n,0}+\int_{0}^{t}[A^{(\alpha )}{u}_{n}{(s)}-\lambda {u}_{n}{(s)}+{f}%
_{n}(s)]ds  \notag \\
&&+\int_{0}^{t}\int_{\mathbf{R}_{0}^{d}}[{u}_{n}(s-,\cdot +y)-{u}%
_{n}(s-,\cdot )+g_{n}(s,\cdot ,y)]q^{(\alpha )}(ds,dy)1_{\alpha \in (0,2)} 
\notag \\
&&+\int_{0}^{t}\int_{U}{\Phi }_{n}(s,\cdot ,\upsilon )\eta (ds,d\upsilon
)+\int_{0}^{t}[1_{\alpha =2}\sigma ^{i}(s)\partial _{i}{u}_{n}(s)+{h}%
_{n}(s)]dW_{s}\,,  \notag \\
0 &\leq &t\leq T.  \notag
\end{eqnarray}%
According to the estimate of Proposition \ref{prop2}, there is a constant $C$
independent of $n,m$ such that%
\begin{eqnarray*}
|u_{n}-u_{m}|_{\mathbb{H}_{p}^{\beta +\alpha }(E)} &\leq
&C[|u_{n,0}-u_{m,0}|_{\mathbb{B}_{p}^{\beta +\alpha -\frac{\alpha }{p}}(%
\mathbf{R}^{d})}+|f_{n}-f_{m}+(Ig_{n}-Ig_{m})|_{\mathbb{H}_{p}^{\beta }(E)}
\\
&&+|\Phi _{n}-\Phi _{m}|_{\mathbb{H}_{p}^{\beta +\frac{\alpha }{2}%
}(E)}+|\Phi _{n}-\Phi _{m}|_{\mathbb{B}_{pp}^{\beta +\alpha -\frac{\alpha }{p%
}}(E)}+|h_{n}-h_{m}|_{\mathbb{H}_{p}^{\beta +\alpha /2}(E,Y)} \\
&&+|\Lambda g_{n}-\Lambda g_{m}|_{\mathbb{\bar{H}}_{p}^{\beta +\frac{\alpha 
}{2}}(E)}+|\Lambda g_{n}-\Lambda g_{m}|_{\mathbb{\bar{B}}_{pp}^{\beta
+\alpha -\frac{\alpha }{p}}(E)}].
\end{eqnarray*}%
Therefore the sequence $u_{n}$ is Cauchy in $\mathbb{H}_{p}^{\beta +\alpha
}(E)$ and there is $u\in \mathbb{H}_{p}^{\beta +\alpha }(E)$ such that $%
|u_{n}-u|_{\mathbb{H}_{p}^{\beta +\alpha }(E)}\rightarrow 0$ as $%
n\rightarrow \infty $. Using Lemmas \ref{ler1}, \ref{lem2}, Corollary \ref%
{corr1} and Theorem 1 in \cite{mir3mol}, we pass easily to the limit in (\ref%
{f00}) as $n\rightarrow \infty $ in $H_{p}^{\beta }(\mathbf{R}^{d})$.
Obviously, $u(t)$ is $H_{p}^{\beta }(\mathbf{R}^{d})$-valued cadlag function.

The uniqueness follows by Corollary \ref{lu3}. Theorem \ref{main 2} is
proved.

\end{document}